\newtheorem{theorem}[subsection]{Theorem}  
\newtheorem{proposition}[subsection]{Proposition}
\newtheorem{definition}[subsection]{Definition}
\newtheorem{claim}[subsection]{Claim}
\newtheorem{remark}[subsection]{Remark}
\def\fb{{\mathfrak{b}}}
\def\fg{{\mathfrak{g}}}
\def\fsl{{\mathfrak{sl}}}
\def\fgl{{\mathfrak{gl}}}
\def\hgl{{\widehat{\fgl}}}
\def\BC{{\mathbb{C}}}
\def\BN{{\mathbb{N}}}
\def\BP{{\mathbb{P}}}
\def\BR{{\mathbb{R}}}
\def\BZ{{\mathbb{Z}}}
\def\woo{\widehat{\otimes}}
\def\CA{{\mathcal{A}}}
\def\CB{{\mathcal{B}}}
\def\CO{{\mathcal{O}}}
\def\CS{{\mathcal{S}}}
\def\CV{{\mathcal{V}}}
\def\ph{\varphi}
\def\sym{\textrm{Sym}}
\def\UU{U_q(L\fg)}
\def\UUp{U_q^+(L\fg)}
\def\UUm{U_q^-(L\fg)}
\def\tUU{\widetilde{U}_q(L\fg)}
\def\bd{{\mathbf{d}}}
\def\br{{\mathbf{r}}}
\def\bs{\boldsymbol{\varsigma}}
\def\cc{{\mathbb{C}^I}}
\def\nn{{\mathbb{N}^I}}
\def\zz{{\mathbb{Z}^I}}
\def\rr{{\mathbb{R}^I}}
\def\balpha{{\boldsymbol{\alpha}}}
\def\bom{{\boldsymbol{\la}}}
\def\bpsi{{\boldsymbol{\psi}}}
\def\bom{{\boldsymbol{\omega}}}
\def\bm{{\boldsymbol{m}}}
\def\bn{{\boldsymbol{n}}}
\def\bp{{\boldsymbol{p}}}
\def\bx{{\boldsymbol{x}}}
\def\by{{\boldsymbol{y}}}
\def\b0{{\boldsymbol{0}}}
\def\bone{{\boldsymbol{1}}}
\def\hdeg{\text{hdeg }}
\def\vdeg{\text{vdeg }}
\def\UUaff{U_q(\widehat{\fg})_{c=1}}
\def\UUaffg{U_q(\widehat{\fb}^+)_{c=1}}
\def\UUaffl{U_q(\widehat{\fb}^-)_{c=1}}
\def\bx{\boldsymbol{x}}
\def\bone{{\boldsymbol{1}}}
\def\oCS{\mathring{\CS}}
\def\ord{\textbf{ord }}
\def\eord{\textbf{\emph{ord }}}
\def\binfty{\boldsymbol{\infty}}
\begin{document}

\title[Characters of quantum loop algebras]{\Large{\textbf{Characters of quantum loop algebras}}} 

\author[Andrei Negu\cb t]{Andrei Negu\cb t}

\address{École Polytechnique Fédérale de Lausanne (EPFL), Lausanne, Switzerland \newline \text{ } \ \ Simion Stoilow Institute of Mathematics (IMAR), Bucharest, Romania} 

\email{andrei.negut@gmail.com}

\maketitle
	
\begin{abstract} The $q$-characters of quantum loop algebras are very important objects in representation theory. In \cite{N Cat}, we showed that $q$-characters factor as a power series of the form studied in \cite{FM} times a character, an important phenomenon which had already been known in finite types. In the present paper, we prove a conjectural formula for the aforementioned character factor. \end{abstract}

\bigskip

\section{Introduction}
\label{sec:intro}

\medskip

\subsection{$q$-characters}

Fix a non-zero complex number $q$, which is not a root of unity. Let $\fg$ be a semisimple finite-dimensional complex Lie algebra, for which we fix a set of simple roots $\{\bs^i\}_{i \in I}$ and the corresponding set of positive roots $\Delta^+$. The representation theory of the quantum loop algebra
$$
\UU = \BC \Big \langle e_{i,d}, f_{i,d}, \ph_{i,d'}^\pm \Big \rangle_{i \in I, d \in \BZ, d' \geq 0} \Big/ \text{relations}
$$
has been long studied, see for instance \cite{CP, HJ}. Specifically, the simple representations $L(\bpsi)$ of (the Borel subalgebra of) $\UU$ are indexed by so-called $\ell$-weights
$$
\bpsi = \left(\psi_i(z) = \sum_{d=0}^{\infty} \frac {\psi_{i,d}}{z^d} \right)_{i \in I}
$$
which are rational, i.e. each $\psi_i(z)$ is the power series expansion of a rational function. Moreover, $\ell$-weights feature in the $q$-characters (\cite{FR}) of representations $V$ of (the Borel subalgebra of) $\UU$, which are defined as
$$
\chi_q(V) = \sum_{\ell\text{-weights } \bpsi} \dim_{\BC} (V_{\bpsi}) [\bpsi]
$$
Above, $V_\bpsi$ denotes the generalized eigenspace of $V$ in which the commuting family of operators $\{\ph_{i,d}^+\}_{i\in I, d \geq 0}$ acts according to the eigenvalues $\{\psi_{i,d}\}_{i\in I, d\geq 0}$, and $[\bpsi]$ denotes certain formal symbols that are required to satisfy $[\bpsi][\bpsi'] = [\bpsi\bpsi']$ with respect to component-wise multiplication of $I$-tuples of rational functions.

\medskip

\subsection{Characters}

It has long been observed (this was communicated to the author by David Hernandez, to whom we are very grateful, and it also features in \cite{W}) that the $q$-characters of simple representations $L(\bpsi)$ factor as
\begin{equation}
\label{eqn:formula observed}
\chi_q(L(\bpsi)) = \chi_q^{\neq 0}(L(\bpsi)) \cdot \chi^{\ord \bpsi} 
\end{equation}
where we write $\ord \bpsi = (\text{order of the pole at }0 \text{ of }\psi_i(z))_{i\in I} \in \zz$, and

\medskip

\begin{itemize}[leftmargin=*]

\item $\chi_q^{\neq 0}(L(\bpsi))$ is $[\bpsi]$ times a series in the symbols $\{A_{i,x}^{-1}\}_{i \in I, x \in \BC^*}$ of \cite{FR}, see \eqref{eqn:fm}.

\medskip

\item $\chi^{\ord \bpsi}$ is a character, i.e. a power series in the formal symbols $q^{-\bn}$ for various $\bn = (n_i)_{i\in I}$ lying in the non-negative orthant of the root lattice $\nn$; we identify $q^{-\bn}$ with the $I$-tuples of constant power series $\psi_i(z) = q^{-(\bn,\bs^i)}$.

\end{itemize}

\medskip

\noindent As far as identifying the factors that appear in \eqref{eqn:formula observed} is concerned, the left-most term is conjectured to coincide with the $q$-character of a representation of an appropriately shifted quantum loop algebra (\cite{H Shifted}). As for the right-most term, it is conjectured (\cite{W}, see also \cite{MY} for the case $\br \in \nn$) to be given by the following formula
\begin{equation}
\label{eqn:main conj}
\chi^{\br} = \prod_{\balpha \in \Delta^+} \left( \frac 1{1-q^{-\balpha}} \right)^{\max(0,\br \cdot \balpha)}, \quad \forall \br \in \zz
\end{equation}
where $\br \cdot \bn = \sum_{i \in I} r_i n_i$. In the present paper, we will prove a refined version of formula \eqref{eqn:main conj}, as follows: we realize $\chi^{\br}$ as the graded dimension of the vector space
\begin{equation}
\label{eqn:vector space}
L^{\br} = \CS_{<\b0}^- \Big/ \left( \text{kernel of the pairing }  \CS_{\geq -\br}^+ \otimes \CS_{<\b0}^- \xrightarrow{\langle \cdot, S(\cdot) \rangle} \BC \right)
\end{equation}
(see Subsections \ref{sub:pairing}, \ref{sub:slope}, \ref{sub:zero}, \ref{sub:characters} for the notation above) and observe that the vector space $L^\br$ has an additional grading by $d \in \{0,1,2,\dots\}$ (which matches the interesting grading on prefundamental modules discovered by \cite{FH}). Then one may ask to compute the refined $q$-character $\chi_{\text{ref}}^{\br}$ of the vector space \eqref{eqn:vector space}, which depends on an additional variable $v$ that keeps track of the grading by $d$.

\medskip

\begin{theorem}
	\label{thm:main}

For any semisimple complex Lie algebra $\fg$, we have
\begin{equation}
\label{eqn:main}
\chi_{\emph{ref}}^{\br} = \prod_{\balpha \in \Delta^+} \prod_{d=1}^{\max(0,\br \cdot \balpha)}  \frac 1{1-q^{-\balpha}v^d} 
\end{equation}
for all $\br \in \zz$. 

\end{theorem}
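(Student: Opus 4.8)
The plan is to compute the graded dimension of $L^\br$ via a PBW-type basis of the shuffle algebra $\CS^-$ that is compatible with the slope filtration of Subsection~\ref{sub:slope}, and then to show that the pairing $\langle\,\cdot\,,S(\cdot)\rangle$ factorizes, on the associated graded, into rank-one contributions indexed by the positive roots $\balpha\in\Delta^+$. First I would recall the loop PBW basis of $\CS^-$, whose generators are root vectors indexed by pairs $(\balpha,d)$ with $\balpha\in\Delta^+$ and $d\geq 1$; here $\balpha$ records the horizontal degree in the root lattice and $d$ records the vertical loop degree tracked by $v$. The slope condition $<\b0$ then selects exactly the subspace $\CS^-_{<\b0}$ spanned by ordered monomials in these generators, and dually $\CS^+_{\geq-\br}$ carries an analogous basis. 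The crucial feature is that the slope of a generator $(\balpha,d)$ is governed by $d$ relative to $\br\cdot\balpha$, so that the constraint $\geq-\br$ on the positive side should translate, after pairing, into the admissible range $1\leq d\leq\br\cdot\balpha$.

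Second, I would establish that with respect to the slope filtration the Gram matrix of the pairing is block-triangular, its associated graded splitting as a tensor product over $\balpha\in\Delta^+$ of rank-one pairings. This is where the antipode and the shuffle coproduct of Subsection~\ref{sub:pairing} must be made to interact with the slope grading: cross terms between distinct roots should lie strictly below the diagonal of the filtration, so that the diagonal blocks are precisely the self-pairings of the individual root subalgebras. Granting this, the right radical of the pairing is spanned by all PBW monomials of $\CS^-_{<\b0}$ that involve at least one generator $(\balpha,d)$ with $d>\br\cdot\balpha$ (vacuously every $d\geq 1$ when $\br\cdot\balpha\leq 0$), and the quotient $L^\br$ inherits a basis of ordered monomials in the surviving generators $(\balpha,d)$ with $1\leq d\leq\max(0,\br\cdot\balpha)$.

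The refined character is then read off directly. Each surviving generator $(\balpha,d)$ contributes, as a free bosonic PBW variable of horizontal degree $\balpha$ and vertical degree $d$, a geometric factor $\tfrac{1}{1-q^{-\balpha}v^d}$; multiplying over all admissible pairs gives exactly \eqref{eqn:main}. Specializing $v=1$ collapses the inner product to $\bigl(\tfrac{1}{1-q^{-\balpha}}\bigr)^{\max(0,\br\cdot\balpha)}$ and recovers the conjectured formula \eqref{eqn:main conj}.

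I expect the main obstacle to be the factorization in the second step --- proving that the pairing descends cleanly to the slope-graded PBW decomposition with rank-one diagonal blocks supported on exactly the range $1\leq d\leq\br\cdot\balpha$. Controlling the off-diagonal interaction between distinct positive roots, and verifying that it is strictly lower in the slope filtration so that the associated graded is a genuine tensor product, is the technical heart; this is where the shuffle-algebra machinery of \cite{N Cat} together with the explicit slope filtration must be brought to bear.
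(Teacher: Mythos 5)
Your reduction of the problem is sound, and your degree bookkeeping is correct: the pairing can only be nonzero between components of opposite degrees, which is exactly what forces the admissible range $1\leq d\leq \max(0,\br\cdot\balpha)$, and the resulting product formula matches \eqref{eqn:main}. However, the step you yourself call the technical heart --- block-triangularity of the antipode-twisted Gram matrix with respect to the slope/PBW decomposition, with rank-one diagonal blocks, so that the radical is spanned by monomials containing an out-of-range generator --- is precisely the missing content, and it does not follow from the ingredients you invoke. The orthogonality of the pairing with respect to slope factorizations, formula \eqref{eqn:pair slopes basic}, is proved for the \emph{untwisted} pairing with both sides factored along one and the same catty-corner curve over the same range of slopes; in your setting the two sides are restricted to \emph{different} slope ranges ($\geq-\br$ on the plus side, $<\b0$ on the minus side) and the pairing is twisted by the antipode, which reverses products and introduces Cartan factors. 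Concretely, pairing $E$ against $S(zy)$ produces coproduct cross-terms $\langle E_1,S(y)\rangle\langle E_2,S(z)\rangle$ where $E_1\otimes E_2$ runs over $\Delta(E)$, \emph{not} over any PBW factorization of $E$; showing that this convolution of functionals is injective (equivalently, that your diagonal blocks are nondegenerate and the cross-terms genuinely triangularize) is the whole difficulty, and nothing in your outline addresses it. Note also that the existence of the loop PBW basis you start from is itself not free in this framework: the paper never constructs root vectors, only graded dimensions of slope subalgebras (Proposition \ref{prop:slope dim}), and even the identification of generic-slope subalgebras with copies of $U_{q_\balpha}(\fsl_2)$ (Subsection \ref{sub:identifying}) is derived from formula \eqref{eqn:conj} rather than assumed.

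The paper closes this gap by a mechanism that is genuinely different from direct triangularity, and it requires a global input that is entirely absent from your proposal. After dualizing to the $\CS^+$ side (the elementary equality \eqref{eqn:last iso}) and applying the shift automorphism $\sigma_{\br}$, the paper proves Proposition \ref{prop:key} by a wall-crossing argument: as the endpoints of a generic catty-corner curve cross the walls $\bp\cdot\bn\in\BZ$, the graded dimension of the image of the twisted pairing changes by \emph{at most} the graded dimension of the slope subalgebra being crossed (inequalities \eqref{eqn:change lhs 1}--\eqref{eqn:change lhs 2}); equality at every wall is then forced by comparing the two extremes, where the full pairing \eqref{eqn:pairing shuffle} is nondegenerate and its graded dimension is known by \eqref{eqn:conj} (proved for finite type in \cite{N Cat}). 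In other words, the paper never establishes your exact local block structure; it sandwiches the answer between one-sided estimates and a global dimension count. Any repair of your argument must either import this same squeeze, or prove the orthogonality of the antipode-twisted pairing on mismatched half-algebras directly --- and in either case the global formula \eqref{eqn:conj} is indispensable, since without it the local analysis cannot rule out that some of your ``diagonal blocks'' are degenerate.
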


\medskip

\subsection{Kac-Moody Lie algebras} In \cite{N Cat}, we extended the category $\CO$ of representations of (the Borel subalgebra of) $\UU$ to arbitrary Kac-Moody Lie algebras $\fg$. Our main tool for doing so was the shuffle algebra, which we will recall in Section \ref{sec:shuffle}. In a nutshell, the double shuffle algebra $\CS$ of \eqref{eqn:double shuffle} features an isomorphism
$$
\CS \cong \UU
$$
When $\fg$ is of finite type, the above isomorphism sends the subalgebras $\CS_{<\b0}^-, \CS_{\geq \b0}^+$ of the LHS (defined as in Subsection \ref{sub:slope}) onto halves of the Borel subalgebra of the RHS  (see Proposition \ref{prop:slope affine} for the precise statement).  In \cite{N Cat}, we used the shuffle algebra language to provide an explicit construction of the simple modules $L(\bpsi)$ and of their $q$-characters. In the present paper, we will refine the notion of slopes in shuffle algebras from \cite{N R-matrix} in order to prove Theorem \ref{thm:main}. In fact, the result that we will prove in Subsection \ref{sub:characters} actually applies to any Kac-Moody Lie algebra $\fg$, in that we show 
\begin{equation}
\label{eqn:thm intro}
\chi_{\text{ref}}^{\br} = \prod_{\bn \in \nn \backslash \b0} \prod_{d=1}^{\max(0,\br \cdot \bn)} \left( \frac 1{1-q^{-\bn}v^d} \right)^{a_{\fg,\bn}}
\end{equation}
where the numbers $\{a_{\fg,\bn}\}_{\bn \in \nn}$ are defined in \eqref{eqn:conj hor}. However, we only prove \eqref{eqn:thm intro} conditional on the conjectural formula \eqref{eqn:conj}. The aforementioned conjecture is known when $\fg$ is of finite type, which establishes \eqref{eqn:main}, but this conjecture is also known in related cases such as that of quantum toroidal $\fgl_1$ studied in \cite{FJMM} (which corresponds to $\fg = \hgl_1$). 

\medskip

\subsection{Acknowledgements} I would like to thank David Hernandez for his great help and patience explaining the many aspects of the theory of $q$-characters and category $\CO$ for quantum loop algebras, and for his initial suggestion to pursue the computation of the characters $\chi^{\br}$.

\bigskip

\section{Shuffle algebras}
\label{sec:shuffle}

\medskip

\subsection{Basic definitions}
\label{sub:basic}

The set $\BN$ is assumed to contain 0 throughout the present paper. Fix a finite set $I$ and a symmetrizable Cartan matrix
\begin{equation}
	\label{eqn:cartan matrix}
	\left(\frac {2d_{ij}}{d_{ii}} \in \BZ\right)_{i,j \in I}
\end{equation}
where $d_{ii}$ are positive integers with greatest common divisor 2, and $d_{ij} = d_{ji}$ are non-positive integers. This data corresponds to a Kac-Moody Lie algebra $\fg$, which we will refer to as the ``type" of all algebras considered hereafter. $I$ is interpreted as a set of simple roots of $\fg$, and $\zz$ should be interpreted as the root lattice. The Cartan matrix \eqref{eqn:cartan matrix} is called symmetric, or equivalently $\fg$ is called simply laced, if
\begin{equation}
	\label{eqn:symmetric matrix}
	d_{ii} = 2, \quad \ \forall i \in I
\end{equation}
Knowledge of the integers $\{d_{ij}\}_{i,j \in I}$ provides a symmetric bilinear pairing 
\begin{equation}
	\label{eqn:symmetric pairing}
	\cc \otimes \cc \xrightarrow{( \cdot,\cdot )} \BC, \qquad (\bs^i, \bs^j) = d_{ij}
\end{equation}
where $\bs^i = \underbrace{(0,\dots,0,1,0,\dots,0)}_{1\text{ on the }i\text{-th position}}$. For any $\bm = (m_i)_{i \in I}, \bn = (n_i)_{i \in I} \in \cc$, we let
\begin{equation}
	\label{eqn:dot product}
	\bm \cdot \bn = \sum_{i \in I} m_i n_i
\end{equation}
For $\bm,\bn \in \rr$, we will write $\bm \leq \bn$ for the partial order determined by $m_i \leq n_i$ for all $i \in I$. We will also use the notation $\b0 = (0,\dots,0)$ and $\bone = (1,\dots,1)$.

\medskip

\subsection{The quantum loop algebra}

We briefly recall the definition of the quantum loop algebra associated to the Kac-Moody Lie algebra $\fg$ (see \cite{Dr} for the finite type case, \cite{N Loop} for simply-laced Kac-Moody types, and \cite{N Arbitrary} for expectations and conjectures on the general case). Consider the following formal series for all $i \in I$
\begin{equation}
\label{eqn:formal series}
  e_i(x) = \sum_{d \in \BZ} \frac {e_{i,d}}{x^d}, \qquad
  f_i(x) = \sum_{d \in \BZ} \frac {f_{i,d}}{x^d}, \qquad
  \ph^\pm_i(x) = \sum_{d = 0}^\infty \frac {\ph^\pm_{i,d}}{x^{\pm d}}
\end{equation}
and let $\delta(x) = \sum_{d \in \BZ} x^d$ be the formal delta function. For any $i,j \in I$, set
\begin{equation}
\label{eqn:zeta}
  \zeta_{ij} (x) = \frac {x - q^{-d_{ij}}}{x - 1}
\end{equation}
For any $i \in I$, we will write $q_i = q^{\frac {d_{ii}}2}$. 

\medskip

\begin{definition}
\label{def:pre quantum loop}

The pre-quantum loop algebra associated to $\fg$ is
\begin{equation}
\label{eqn:pre-quantum}
\tUU = \BC \Big \langle e_{i,d}, f_{i,d}, \ph_{i,d'}^\pm \Big \rangle_{i \in I, d \in \BZ, d' \geq 0} \Big/ \text{relations \eqref{eqn:rel 0 loop}-\eqref{eqn:rel 3 loop}}
\end{equation}
where we impose the following relations for all $i,j \in I$ and all $\pm, \pm' \in \{+,-\}$:
\begin{equation}
\label{eqn:rel 0 loop}
e_i(x) e_j(y) \zeta_{ji} \left( \frac yx \right) =\, e_j(y) e_i(x) \zeta_{ij} \left(\frac xy \right)
\end{equation}
\begin{equation}
\label{eqn:rel 1 loop}
  \ph_j^\pm(y) e_i(x) \zeta_{ij} \left(\frac xy \right) = e_i(x) \ph_j^\pm(y) \zeta_{ji} \left( \frac yx \right)
\end{equation}
\begin{equation}
\label{eqn:rel 2 loop}
  \ph_{i}^{\pm}(x) \ph_{j}^{\pm'}(y) = \ph_{j}^{\pm'}(y) \ph_{i}^{\pm}(x), \quad
  \ph_{i,0}^+ \ph_{i,0}^- = 1
\end{equation}
as well as the opposite relations with $e$'s replaced by $f$'s, and finally the relation
\begin{equation}
\label{eqn:rel 3 loop}
  \left[ e_i(x), f_j(y) \right] =
  \frac {\delta_{ij}\delta \left(\frac xy \right)}{q_i-q_i^{-1}}  \Big( \ph_i^+(x) - \ph_i^-(y) \Big)
\end{equation}
We define the quantum loop algebra associated to $\fg$ as
\begin{equation}
\label{eqn:quantum}
\UU = \tUU \Big / (\text{kernel of the Hopf pairing})
\end{equation}
(see \cite[Subsection 2.5]{N Cat} for the meaning of the right-hand side of \eqref{eqn:quantum}, and for an explicit description of the aforementioned kernel in finite and simply laced types).

\end{definition}

\medskip

\subsection{The big shuffle algebra}
\label{sub:big shuffle}

We now review the trigonometric version (\cite{E1, E2}) of the Feigin-Odesskii shuffle algebra (\cite{FO}) associated to a Kac-Moody Lie algebra $\fg$. Consider the vector space of rational functions in arbitrarily many variables
\begin{equation}
	\label{eqn:big shuffle}
	\CV = \bigoplus_{\bn \in \nn} \CV_{\bn}, \quad \text{where} \quad \CV_{(n_i)_{i \in I}} = \frac {\BC[z_{i1}^{\pm 1},\dots,z_{in_i}^{\pm 1}]^{\text{sym}}_{i \in I}}{\prod^{\text{unordered}}_{\{i \neq j\} \subset I} \prod_{1\leq a \leq n_i, 1\leq b \leq n_j} (z_{ia} - z_{jb})}
\end{equation}
Above, ``sym" refers to \textbf{color-symmetric} Laurent polynomials, meaning that they are symmetric in the variables $z_{i1},\dots,z_{in_i}$ for each $i \in I$ separately (the terminology is inspired by the fact that $i \in I$ is called the color of the variable $z_{ia}$). We make the vector space $\CV$ into a $\BC$-algebra via the following shuffle product:
\begin{equation}
	\label{eqn:mult}
	E( z_{i1}, \dots, z_{i n_i}) * E'(z_{i1}, \dots,z_{i n'_i}) = \frac 1{\bn!  \bn'!}\,\cdot
\end{equation}
$$
\textrm{Sym} \left[ E(z_{i1}, \dots, z_{in_i}) E'(z_{i,n_i+1}, \dots, z_{i,n_i+n'_i})
\prod_{i,j \in I} \mathop{\prod_{1 \leq a \leq n_i}}_{n_j < b \leq n_j+n_j'} \zeta_{ij} \left( \frac {z_{ia}}{z_{jb}} \right) \right]
$$
The word $\sym$ denotes symmetrization with respect to the
\begin{equation*}
	(\bn+\bn')! := \prod_{i\in I} (n_i+n'_i)!
\end{equation*}
permutations of the variables $\{z_{i1}, \dots, z_{i,n_i+n'_i}\}$ for each $i$ independently. The reason for this definition is that there exist algebra homomorphisms
\begin{align}
&\Upsilon^+ : \UUp \rightarrow \CV, \quad \ \ e_{i,d} \mapsto z_{i1}^d \in \CV_{\bs^i} \label{eqn:upsilon plus} \\
&\Upsilon^- : \UUm \rightarrow \CV^{\text{op}}, \quad f_{i,d} \mapsto z_{i1}^d \in \CV^{\text{op}}_{\bs^i} \label{eqn:upsilon minus}
\end{align}
where $\UUp, \UUm$ denote the subalgebras of $\UU$ generated by $\{e_{i,d}\}_{i \in I, d\in \BZ}$ and $\{f_{i,d}\}_{i \in I, d\in \BZ}$, respectively. It was shown in \cite{N Arbitrary} that the homomorphisms above are injective, i.e. the kernel of the Hopf pairing by which we factor in \eqref{eqn:quantum} precisely contains all possible relations between $e_{i,d}$ and $f_{i,d}$ inside $\CV$ and $\CV^{\text{op}}$, respectively. 

\medskip

\subsection{The shuffle algebra}
\label{sub:small shuffle}

Due to the injective homomorphisms $\Upsilon^\pm$ above, we will often abuse notation by denoting $e_{i,d} = z_{i1}^d \in \CV_{\bs^i}$ and $f_{i,d} = z_{i1}^d \in \CV_{\bs^i}^{\text{op}}$. Then
\begin{equation}
\label{eqn:spherical}
\CS^\pm = \text{Im }\Upsilon^\pm
\end{equation}
are called \textbf{shuffle algebras}, i.e. the subalgebras of $\CV$ and $\CV^{\text{op}}$ (when $\pm$  is $+$ and $-$, respectively) generated by $\{e_{i,d}\}_{i \in I, d\in \BZ}$ and $\{f_{i,d}\}_{i \in I, d\in \BZ}$ (respectively). 

\medskip

\begin{remark} 

For $\fg$ of finite type, we have the following explicit description
\begin{equation}
	\label{eqn:e}
	\CS^\pm = \left\{  \frac {\rho(z_{i1},\dots,z_{in_i})_{i \in I}}
	{\prod^{\text{unordered}}_{\{i \neq j\} \subset I} \prod_{1\leq a \leq n_i, 1\leq b \leq n_{j}} (z_{ia} - z_{jb})} \right\}
\end{equation}
where $\rho$ goes over the set of color-symmetric Laurent polynomials that satisfy the Feigin-Odesskii \textbf{wheel conditions}:
\begin{equation}
	\label{eqn:wheel}
	\rho(\dots, z_{ia}, \dots,z_{jb},\dots)\Big|_{(z_{i1},z_{i2}, \dots, z_{in}) \mapsto (w, w q^{d_{ii}},  \dots, w q^{(n-1)d_{ii}}),\, z_{j1} \mapsto w q^{-d_{ij}}} =  0
\end{equation}
for any $i \neq j$ in $I$, where $n = 1 - \frac {2d_{ij}}{d_{ii}}$. The inclusion $\subseteq$ of \eqref{eqn:e} was established in \cite{E1, E2} based on the seminal work of \cite{FO}, while the inclusion $\supseteq$ was proved in \cite{NT}.

\end{remark}

\medskip 

\noindent Elements of either $\CS^+$ or $\CS^-$ will be referred to as \textbf{shuffle elements}. We may grade the shuffle algebras $\CS^\pm$ by $(\pm \nn) \times \BZ$ via
\begin{equation}
	\label{eqn:deg}
	\deg X = (\pm \bn, d)
\end{equation}
for any $X(z_{i1},\dots,z_{in_i}) \in \CS^\pm$ of total homogeneous degree $d$, where $\bn = (n_i)_{i \in I}$. We will call $\pm \bn$ and $d$ the \textbf{horizontal} and \textbf{vertical} degrees of $X$, and denote them
\begin{equation}
	\label{eqn:hdeg vdeg}
	\hdeg X = \pm \bn \quad \text{and} \quad \vdeg X = d
\end{equation}
In terms of this grading, shuffle algebras decompose as
\begin{equation}
	\label{eqn:graded pieces}
	\CS^\pm = \bigoplus_{\bn \in \nn} \CS_{\pm \bn} = \bigoplus_{\bn \in \nn} \bigoplus_{d \in \BZ} \CS_{\pm \bn, d}
\end{equation}

\medskip

\begin{proposition}
\label{prop:shift}
	
	For any $\br \in \zz$, we have \textbf{shift} automorphisms of $\CS^\pm$
	\begin{equation}
		\label{eqn:shift}
		\CS_{\pm \bn,\bd} \xrightarrow{\sigma_{\br}} \CS_{\pm \bn,\bd \pm \br\cdot \bn}, \qquad R(z_{i1},\dots,z_{in_i}) \mapsto R(z_{i1},\dots,z_{in_i}) \prod_{i \in I} \prod_{a=1}^{n_i} z_{ia}^{\pm r_i}
	\end{equation}	
and note that they satisfy $\sigma_{\br} \circ \sigma_{\br'} = \sigma_{\br+\br'} = \sigma_{\br'} \circ \sigma_{\br}$.
	
\end{proposition}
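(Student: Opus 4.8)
The plan is to first realize $\sigma_{\br}$ as an algebra automorphism of the big shuffle algebra $\CV$, and only then restrict it to the subalgebra $\CS^\pm$. The central observation is that the shift factor $M_{\br,\bn} := \prod_{i \in I} \prod_{a=1}^{n_i} z_{ia}^{\pm r_i}$ is \emph{color-symmetric}: all variables of a fixed color $i$ are raised to the same exponent $\pm r_i$, so $M_{\br,\bn}$ is invariant under permuting the $z_{ia}$ within each color. Hence multiplication by $M_{\br,\bn}$ preserves the color-symmetry of the numerator and leaves the denominator in \eqref{eqn:big shuffle} untouched, giving a well-defined linear map $\CV_{\pm\bn} \to \CV_{\pm\bn}$; since $M_{\br,\bn}$ has total homogeneous degree $\sum_{i} (\pm r_i) n_i = \pm\br\cdot\bn$, this map raises the vertical degree by $\pm\br\cdot\bn$, matching the target grading $\CS_{\pm\bn,\, d\pm\br\cdot\bn}$ claimed in the statement.

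Next I would check multiplicativity, which is the one genuinely computational point. Expanding $\sigma_{\br}(E) * \sigma_{\br}(E')$ via \eqref{eqn:mult}, the monomial attached to $E$ (in the variables $z_{ia}$, $a \leq n_i$) and the one attached to $E'$ (in the variables $z_{ib}$, $n_i < b \leq n_i+n_i'$) combine into $\prod_i \prod_{a=1}^{n_i+n_i'} z_{ia}^{\pm r_i} = M_{\br,\bn+\bn'}$, which is symmetric under \emph{all} the permutations appearing in the operator $\sym$. A fully color-symmetric factor commutes with $\sym$ and can therefore be pulled outside the symmetrization, leaving precisely $M_{\br,\bn+\bn'} \cdot (E * E') = \sigma_{\br}(E*E')$. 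The subtlety one must respect here — and the reason the construction works at all — is that $M$ must be symmetric in exactly the variables over which one symmetrizes, which forces the uniform exponent $\pm r_i$ within each color.

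It then remains to pass from $\CV$ to $\CS^\pm$ and to record the group law. Invertibility on $\CV$ is immediate because $\sigma_{-\br}$ is a two-sided inverse, and the identities $\sigma_{\br} \circ \sigma_{\br'} = \sigma_{\br+\br'} = \sigma_{\br'} \circ \sigma_{\br}$ follow from $M_{\br,\bn} \cdot M_{\br',\bn} = M_{\br+\br',\bn}$ together with commutativity of addition in $\zz$. The point that requires a separate (though short) argument is that $\sigma_{\br}$ preserves the generated subalgebra $\CS^\pm$ and not merely $\CV$: on generators it acts by $e_{i,d} = z_{i1}^d \mapsto z_{i1}^{d \pm r_i} = e_{i,d \pm r_i}$ (and likewise for the $f$'s), so $\sigma_{\br}$ sends generators of $\CS^\pm$ to generators of $\CS^\pm$, and so does $\sigma_{-\br}$; being an algebra map, $\sigma_{\br}$ therefore restricts to a bijective endomorphism of $\CS^\pm$, i.e. an automorphism. (In finite type one could alternatively invoke the explicit description \eqref{eqn:e} and note that $M_{\br,\bn}$ specializes to a nonzero constant under the wheel specialization \eqref{eqn:wheel}, so the wheel conditions are preserved; but the generator argument is cleaner and applies to all Kac-Moody types.) The same reasoning holds verbatim for the sign $-$ inside $\CV^{\text{op}}$, since any algebra automorphism of $\CV$ is automatically one of $\CV^{\text{op}}$.
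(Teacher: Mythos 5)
Your proof is correct, and it supplies exactly the routine verification that the paper leaves implicit: Proposition \ref{prop:shift} is stated there without proof, and your argument (the shift monomial is color-symmetric, hence commutes with the symmetrization in \eqref{eqn:mult} and defines an algebra endomorphism of $\CV$ and of $\CV^{\text{op}}$; it preserves $\CS^\pm$ because it sends generators to generators, $e_{i,d} \mapsto e_{i,d \pm r_i}$ and likewise for the $f$'s; and it is invertible with two-sided inverse $\sigma_{-\br}$, whence the group law) is the natural one, matching how the paper later uses these automorphisms in the proof of Proposition \ref{prop:shifts}. One minor imprecision in your parenthetical alternative: under the wheel specialization \eqref{eqn:wheel} the monomial $M_{\br,\bn}$ specializes to a nonzero Laurent monomial in $w$, $q$ and the untouched variables, not to a constant; the intended conclusion survives, since multiplication by a nonzero monomial preserves the vanishing required by \eqref{eqn:wheel}, but as you note the generator argument is the cleaner route and is the one that works beyond finite type.
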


\medskip

\subsection{The double shuffle algebra}
\label{sub:double shuffle}

Define 
\begin{equation}
\label{eqn:double shuffle}
\CS = \CS^+ \otimes \frac {\BC [\ph_{i,d}^\pm]_{i \in I, d \geq 0}}{\ph_{i,0}^+ \ph_{i,0}^- - 1} \otimes \CS^-
\end{equation}
endowed with the following commutation relations for all $E \in \CS^+$, $F \in \CS^-$, $j \in I$
\begin{align}
	&\ph_j^\pm(y) E(z_{i1},\dots,z_{in_i}) = E(z_{i1},\dots,z_{in_i}) \ph_j^\pm(y) \prod_{i \in I} \prod_{a=1}^{n_i}\frac {\zeta_{ji} \left(\frac y{z_{ia}} \right)}{\zeta_{ij} \left(\frac {z_{ia}}y \right)} \label{eqn:shuffle plus commute} \\
	&F(z_{i1},\dots,z_{in_i}) \ph_j^\pm(y)  = \ph_j^\pm(y) F(z_{i1},\dots,z_{in_i}) \prod_{i \in I} \prod_{a=1}^{n_i}\frac {\zeta_{ji} \left(\frac y{z_{ia}} \right)}{\zeta_{ij} \left(\frac {z_{ia}}y \right)} \label{eqn:shuffle minus commute}
\end{align}
(the right-hand sides of \eqref{eqn:shuffle plus commute}-\eqref{eqn:shuffle minus commute} are expanded in non-positive powers of $y^{\pm 1}$) and
\begin{equation}
	\label{eqn:shuffle plus minus commute}
	\left[ e_{i,d}, f_{j,d'} \right] = \frac {\delta_{ij}}{q_i-q_i^{-1}}  \cdot \begin{cases} \ph_{i,d+d'}^+ &\text{if }d+d' > 0 \\ \ph_{i,0}^+ - \ph_{i,0}^- &\text{if }d+d' = 0 \\ - \ph_{i,-d-d'}^- &\text{if }d+d' < 0 \end{cases}
\end{equation}
Because $\CS^+$ and $\CS^-$ are by definition generated by $e_{i,d}$ and $f_{j,d'}$ as $i,j \in I, d,d' \in \BZ$, the relations above are sufficient to completely reorder any product of elements coming from the three tensor factors in \eqref{eqn:double shuffle}. These specific relations are chosen so that the homomorphisms $\Upsilon^\pm$ of \eqref{eqn:upsilon plus}-\eqref{eqn:upsilon minus} glue to an isomorphism
\begin{equation}
\label{eqn:upsilon}
\Upsilon : \UU \xrightarrow{\sim} \CS
\end{equation}
It will be useful to replace the generators $\ph^\pm_j(y)$ by $\{\kappa_j, p_{j,u}\}_{j \in I, u \in \BZ\backslash 0}$ defined by
$$
\ph^\pm_j(y) = \kappa_j^{\pm 1} \exp \left( \sum_{u=1}^{\infty} \frac {p_{j,\pm u}}{u y^{\pm u}} \right)
$$
In terms of these new generators, relations \eqref{eqn:shuffle plus commute} and \eqref{eqn:shuffle minus commute} take the form
\begin{equation}
	\label{eqn:k shuffle}
	\kappa_j X = X \kappa_j q^{(\pm \bn, \bs^j)}
\end{equation}
\begin{equation}
	\label{eqn:p shuffle}
	\left[p_{j,u}, X \right] = \pm X \sum_{i \in I} \left(z_{i1}^u+\dots+z_{in_i}^u\right)(q^{ud_{ij}} - q^{-ud_{ij}})
\end{equation}
for any $X(z_{i1},\dots,z_{in_i})_{i \in I} \in \CS_{\pm \bn}$.

\medskip

\subsection{The Hopf algebra structure}
\label{sub:shuffle Hopf}

There exist topological coproducts on
\begin{align}
	\CS^{\geq} = \CS^+ \otimes \BC [\ph_{i,d}^+]_{i \in I, d \geq 0} \label{eqn:s geq} \\
	\CS^{\leq} = \BC [\ph_{i,d}^-]_{i \in I, d \geq 0} \otimes \CS^- \label{eqn:s leq}
\end{align}
(we tacitly assume that $\ph_{i,0}^\pm$ are replaced by $\kappa_i^{\pm 1}$ in the formulas above, so they are assumed to be invertible) defined by 
\begin{equation}
	\label{eqn:coproduct ph}
	\Delta(\ph^\pm_i(z)) = \ph^\pm_i(z) \otimes \ph^\pm_i(z)
\end{equation}
and
\begin{align}
	\Delta(E) = \sum_{\b0 \leq \bm \leq \bn} \frac {\prod^{j \in I}_{m_j < b \leq n_j} \ph^+_j(z_{jb}) E(z_{i1},\dots , z_{im_i} \otimes z_{i,m_i+1}, \dots, z_{in_i})}{\prod^{i \in I}_{1\leq a \leq m_i} \prod^{j \in I}_{m_j < b \leq n_j} \zeta_{ji} \left( \frac {z_{jb}}{z_{ia}} \right)} \label{eqn:coproduct shuffle plus} \\
	\Delta(F) = \sum_{\b0 \leq \bm \leq \bn} \frac {F(z_{i1},\dots , z_{im_i} \otimes z_{i,m_i+1}, \dots, z_{in_i}) \prod^{j \in I}_{1 \leq b \leq m_j} \ph^-_j(z_{jb})}{\prod^{i \in I}_{1\leq a \leq m_i} \prod^{j \in I}_{m_j < b \leq n_j} \zeta_{ij} \left( \frac {z_{ia}}{z_{jb}} \right)} \label{eqn:coproduct shuffle minus} 
\end{align}
for all $E \in \CS_{\bn}$, $F \in \CS_{-\bn}$. To make sense of the right-hand side of \eqref{eqn:coproduct shuffle plus} and \eqref{eqn:coproduct shuffle minus}, we expand the denominator as a power series in the range $|z_{ia}| \ll |z_{jb}|$, and place all the powers of $z_{ia}$ to the left of the $\otimes$ sign and all the powers of $z_{jb}$ to the right of the $\otimes$ sign (for all $i,j \in I$, $1 \leq a \leq m_i$, $m_j < b \leq n_j$). There also exists a topological antipode on $\CS^\geq$ and $\CS^\leq$, given by
\begin{equation}
\label{sub:antipode ph}
S \left( \ph^\pm_i(z) \right) = \ph^\pm_i(z)^{-1} := \sum_{d=0}^\infty \frac {\bar{\ph}_{i,d}^\pm}{z^{\pm d}} 
\end{equation}
\begin{equation}
\label{eqn:antipode e and f}
S(e_{i,d}) = - \sum_{u=0}^\infty \bar{\ph}_{i,u}^+ e_{i,d-u}  \quad \text{and} \quad S(f_{i,d}) = - \sum_{u=0}^\infty f_{i,d+u}\bar{\ph}_{i,u}^-
\end{equation}
(these formulas determine the antipode completely, as $\CS^+$ and $\CS^-$ are generated by $e_{i,d}$ and $f_{i,d}$, respectively). The inverse antipode $S^{-1}$ is given by the same formulas as \eqref{eqn:antipode e and f}, but with $\bar{\ph}$ placed on the opposite side of $e$ and $f$.

\medskip 

\subsection{The pairing}
\label{sub:pairing}

We will call $i_1,\dots,i_n \in I$ an \textbf{ordering} of $\bn \in \nn$ if
$$
\bs^{i_1} + \dots + \bs^{i_n} = \bn
$$
If this happens, we will employ for any $X(z_{i1},\dots,z_{in_i})_{i \in I} \in \CS_{\pm \bn}$ the notation
\begin{equation}
\label{eqn:notation ordering}
X(z_1,\dots,z_n)
\end{equation} 
to indicate the fact that each symbol $z_a$ is plugged into a variable of the form $z_{i_a \bullet_a}$ of $X$, where the choice of $\bullet_a \geq 1$ does not matter due to the color-symmetry of $X$.
	
\medskip

\noindent Following \cite{N Wheel, NT}, there exists a non-degenerate pairing
\begin{equation}
	\label{eqn:pairing shuffle}
	\CS^+ \otimes \CS^- \xrightarrow{\langle \cdot, \cdot \rangle} \BC
\end{equation}
completely determined by
\begin{equation}
\Big \langle e_{i_1,d_1} * \dots * e_{i_n,d_n}, F \Big \rangle = \int_{|z_1| \gg \dots \gg |z_n|} \frac {z_1^{d_1} \dots z_n^{d_n} F(z_1,\dots,z_n)}{\prod_{1\leq a < b \leq n} \zeta_{i_bi_a} \left(\frac {z_b}{z_a} \right)} \label{eqn:pairing shuffle minus} 
\end{equation}
for all $F \in \CS_{-\bn}$, any $d_1,\dots,d_n \in \BZ$ and any ordering $i_1,\dots,i_n$ of $\bn$ (the notation in the RHS of \eqref{eqn:pairing shuffle minus} is defined as in \eqref{eqn:notation ordering}). Throughout the present paper, the notation 
$$
\int_{|z_1| \gg  \dots \gg |z_n|} R(z_1,\dots,z_n) \quad \text{is shorthand for} \ \int_{|z_1| \gg \dots \gg |z_n|} R(z_1,\dots,z_n)  \prod_{a=1}^n \frac {dz_a}{2 \pi i z_a}
$$
and refers to the contour integral over concentric circles centered at the origin of the complex plane (the notation $|z_a| \gg |z_b|$ means that the these circles are very far away from each other when compared to any constants that might appear in the formula for $R(z_1,\dots,z_n)$). The following was proved in \cite[Lemma 3.10]{N Cat}
\begin{equation}
\Big \langle e_{i_1,d_1} * \dots * e_{i_n,d_n}, S(F) \Big\rangle = (-1)^n \int_{|z_1| \ll \dots \ll |z_n|} \frac {z_1^{d_1} \dots z_n^{d_n} F(z_1,\dots,z_n)}{\prod_{1\leq a < b \leq n} \zeta_{i_bi_a} \left(\frac {z_b}{z_a} \right)} \label{eqn:antipode pairing shuffle}
\end{equation}
The pairing \eqref{eqn:pairing shuffle} extends to a Hopf pairing
\begin{equation}
	\label{eqn:enhanced pairing}
\CS^\geq \otimes \CS^\leq \xrightarrow{\langle \cdot, \cdot \rangle} \BC 
\end{equation}
by setting $ \langle \ph^+_i(x), \ph^-_j(y)  \rangle =  \frac {\zeta_{ij} \left(\frac xy \right)}{\zeta_{ji} \left(\frac yx \right)} $ (expanded as $|x| \gg |y|$), and requiring
\begin{align}
	&\Big \langle a,b_1b_2 \Big \rangle = \Big \langle \Delta(a), b_1 \otimes b_2 \Big \rangle \label{eqn:bialgebra 1} \\
	&\Big \langle a_1 a_2 ,b \Big \rangle = \Big \langle a_1 \otimes a_2, \Delta^{\text{op}}(b) \Big \rangle \label{eqn:bialgebra 2} 
\end{align}
($\Delta^{\text{op}}$ is the opposite coproduct) and
\begin{equation}
	\label{eqn:antipode pairing}
	\Big \langle S(a), S(b) \Big \rangle = \Big \langle a,b \Big \rangle
\end{equation}
for all $a,a_1,a_2 \in \CS^\geq$ and $b,b_1,b_2 \in \CS^\leq$. With this in mind, $\CS$ of \eqref{eqn:double shuffle} becomes isomorphic to the Drinfeld double $\CS^\geq \otimes \CS^\leq$ (modulo the identification of the element $\kappa_i \otimes 1$ with the inverse of the element $1 \otimes \kappa_i^{-1}$, for all $i \in I$), meaning that we have
\begin{equation}
	\label{eqn:drinfeld double}
	ba = \Big \langle a_1, S(b_1) \Big \rangle a_2b_2 \Big \langle a_3, b_3 \Big \rangle
\end{equation}
for all $a \in \CS^\geq$ and $b \in \CS^\leq$ \footnote{Note a small imprecision: for \eqref{eqn:drinfeld double} to hold as stated, one would need to rescale the RHS of \eqref{eqn:pairing shuffle minus} by $\prod_{a=1}^n (q_{i_a}- q^{-1}_{i_a})^{-1}$, but we will ignore this so as to not overburden the notation.}. In the formula above, we use Sweedler notation 
$$
\Delta^{(2)}(a) = a_1 \otimes a_2 \otimes a_3, \quad \Delta^{(2)}(b) = b_1 \otimes b_2 \otimes b_3
$$
(where $\Delta^{(2)} = (\Delta \otimes \text{Id} ) \circ \Delta$) to avoid writing down the implied summation signs.
	
\medskip 

\begin{proposition}
\label{prop:shifts}

Shifts induce Hopf algebra automorphisms
\begin{equation}
\label{eqn:shift geq}
\sigma_{\br} : \CS^{\geq}  \rightarrow \CS^{\geq} \qquad \text{and} \qquad \sigma_{\br} : \CS^{\leq}  \rightarrow \CS^{\leq}
\end{equation}
that preserve the pairing
\begin{equation}
\label{eqn:shift pairing}
\langle \sigma_{\br}(a), \sigma_{\br}(b) \rangle = \langle a,b \rangle 
\end{equation}
for all $a \in \CS^{\geq}$, $b \in \CS^{\leq}$. Thus, shifts induce Hopf algebra automorphisms
\begin{equation}
\label{eqn:shift double}
\sigma_{\br} : \CS \rightarrow \CS
\end{equation}
of the Drinfeld double.

\end{proposition}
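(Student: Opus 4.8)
The plan is to extend the shift automorphisms $\sigma_\br$ of $\CS^\pm$ from Proposition \ref{prop:shift} to $\CS^\geq$ and $\CS^\leq$ by declaring that $\sigma_\br$ acts as the \emph{identity} on the Cartan generators $\ph_{i,d}^\pm$ (equivalently, on $\kappa_i$ and $p_{i,u}$). First I would verify that this gives a well-defined algebra automorphism of $\CS^\geq$ and $\CS^\leq$: since $\sigma_\br$ preserves horizontal degree (it only rescales a piece $\CS_{\pm\bn}$ by the fixed monomial $\prod_{i,a} z_{ia}^{\pm r_i}$), it immediately respects relation \eqref{eqn:k shuffle}; and because the right-hand side of \eqref{eqn:p shuffle} is $X$ multiplied by a color-symmetric Laurent polynomial $g$ that does not change horizontal degree, we have $\sigma_\br(X\cdot g) = \sigma_\br(X)\cdot g$, so \eqref{eqn:p shuffle} is preserved as well. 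Invertibility is clear, with inverse $\sigma_{-\br}$.

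Next I would check that $\sigma_\br$ is a coalgebra map, i.e. $\Delta \circ \sigma_\br = (\sigma_\br \otimes \sigma_\br)\circ \Delta$. On the Cartan part this is trivial from \eqref{eqn:coproduct ph}, since $\sigma_\br$ fixes $\ph_i^\pm(z)$. On a shuffle element $E \in \CS_\bn$, the key observation is that the shift monomial factors along the tensor decomposition of \eqref{eqn:coproduct shuffle plus}:
\[
\prod_{i \in I}\prod_{a=1}^{n_i} z_{ia}^{r_i} = \Big(\prod_{i}\prod_{a \leq m_i} z_{ia}^{r_i}\Big)\Big(\prod_{i}\prod_{b > m_i} z_{ib}^{r_i}\Big),
\]
where the first factor involves only variables to the left of $\otimes$ and the second only those to the right. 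Since $\sigma_\br$ ignores both the Cartan prefactors $\ph_j^+(z_{jb})$ and the $\zeta$-denominators, applying it term-by-term to $\Delta(E)$ produces exactly $\Delta(\sigma_\br E)$; the same argument handles \eqref{eqn:coproduct shuffle minus}. As a bialgebra endomorphism of the Hopf algebras $\CS^\geq,\CS^\leq$, the map $\sigma_\br$ then automatically commutes with the antipode (one may also check $S\sigma_\br = \sigma_\br S$ directly on $e_{i,d}, f_{i,d}, \ph_i^\pm$ using \eqref{eqn:antipode e and f}), so it is a Hopf algebra automorphism of each half.

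The heart of the matter is preservation of the pairing, \eqref{eqn:shift pairing}. I would first establish it on generators. On $\CS^+ \otimes \CS^-$ this is a direct cancellation in \eqref{eqn:pairing shuffle minus}: since $\sigma_\br(e_{i_1,d_1} * \cdots * e_{i_n,d_n}) = e_{i_1,d_1+r_{i_1}} * \cdots * e_{i_n,d_n+r_{i_n}}$ and $\sigma_\br(F)(z_1,\dots,z_n) = F(z_1,\dots,z_n)\prod_{a} z_a^{-r_{i_a}}$, the extra factors $z_a^{r_{i_a}}$ and $z_a^{-r_{i_a}}$ cancel inside the integrand, leaving the original integral unchanged. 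On the Cartan pairing $\langle \ph_i^+(x),\ph_j^-(y)\rangle$ preservation is trivial as $\sigma_\br$ fixes these, and the mixed pairings $\langle e_{i,d},\ph_j^-\rangle, \langle \ph_i^+, f_{j,d}\rangle$ vanish for both $\langle\cdot,\cdot\rangle$ and $\langle\sigma_\br(\cdot),\sigma_\br(\cdot)\rangle$ by horizontal-degree considerations, which $\sigma_\br$ respects. To pass from generators to all of $\CS^\geq \otimes \CS^\leq$, I would observe that the bilinear form $\langle a, b\rangle' := \langle \sigma_\br(a), \sigma_\br(b)\rangle$ satisfies the same defining properties \eqref{eqn:bialgebra 1}, \eqref{eqn:bialgebra 2}, \eqref{eqn:antipode pairing} as $\langle\cdot,\cdot\rangle$, precisely because $\sigma_\br$ is a Hopf algebra map. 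Since a Hopf pairing is uniquely determined by its values on algebra generators through repeated application of \eqref{eqn:bialgebra 1}--\eqref{eqn:bialgebra 2}, and since $\langle\cdot,\cdot\rangle'$ and $\langle\cdot,\cdot\rangle$ agree on all generators, they coincide; this is \eqref{eqn:shift pairing}.

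Finally, to obtain the automorphism of the double $\CS = \CS^\geq \otimes \CS^\leq$, I define $\sigma_\br$ on the spanning set of products $a\cdot b$ ($a\in\CS^\geq$, $b\in\CS^\leq$) as $\sigma_\br(a)\sigma_\br(b)$, and verify that this respects the straightening relation \eqref{eqn:drinfeld double}. Applying the already-established facts, namely $\Delta\sigma_\br = (\sigma_\br\otimes\sigma_\br)\Delta$, $S\sigma_\br = \sigma_\br S$, and \eqref{eqn:shift pairing}, to the right-hand side of \eqref{eqn:drinfeld double} evaluated at $\sigma_\br(a),\sigma_\br(b)$ reproduces $\sigma_\br$ applied to the right-hand side at $a,b$, so that $\sigma_\br(b)\sigma_\br(a) = \sigma_\br(ba)$; hence $\sigma_\br$ is a well-defined algebra automorphism of $\CS$, and coalgebra compatibility is inherited from the two halves. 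I expect the main obstacle to be the bootstrapping step for the pairing, namely making precise that a Hopf pairing is determined by its values on generators, while the coproduct compatibility and the generator-level pairing computation are clean once the monomial factorization above is observed.
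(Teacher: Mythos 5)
Your proposal is correct and follows essentially the same route as the paper's proof: define $\sigma_{\br}$ to fix the Cartan generators, observe that the defining relations and the coproduct formulas \eqref{eqn:coproduct shuffle plus}--\eqref{eqn:coproduct shuffle minus} are preserved because the shift monomial factors compatibly across products and tensor decompositions, and verify \eqref{eqn:shift pairing} by the cancellation $z_1^{r_{i_1}} \dots z_n^{r_{i_n}} \cdot z_1^{-r_{i_1}} \dots z_n^{-r_{i_n}} = 1$ in the integrand of \eqref{eqn:pairing shuffle minus}. Your additional steps (bootstrapping the pairing from generators via the Hopf pairing axioms, the explicit antipode check, and the verification on the straightening relation \eqref{eqn:drinfeld double}) simply spell out details the paper leaves implicit.
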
	

\medskip 

\begin{proof} The automorphisms \eqref{eqn:shift geq} are obtained from \eqref{eqn:shift}  and 
$$
\sigma_{\br}(\ph^\pm_{i,d}) = \ph^{\pm}_{i,d}
$$
for all $i,d$. The fact that this gives rise to algebra automorphisms \eqref{eqn:shift geq} is because applying the shift to equations \eqref{eqn:shuffle plus commute}-\eqref{eqn:shuffle minus commute} amounts to both sides being multiplied by the same monomial. Similarly, \eqref{eqn:shift geq} is a coalgebra automorphism because applying the shift to equations \eqref{eqn:coproduct shuffle plus}-\eqref{eqn:coproduct shuffle minus} amounts to both sides being multiplied by the same monomial. The fact that the pairing \eqref{eqn:pairing shuffle} is preserved by shift is due to the fact that applying $\sigma_{\br}$ to the two arguments in LHS of \eqref{eqn:pairing shuffle minus} amounts to multiplying the integrand in RHS of \eqref{eqn:pairing shuffle minus} by $z_1^{r_{i_1}} \dots z_n^{r_{i_n}} z_1^{-r_{i_1}} \dots z_n^{-r_{i_n}} = 1$.
	
\end{proof} 
	
\medskip 	

\subsection{Slopes}
\label{sub:slope}

We will now define slope subalgebras in a higher generality than \cite{N Cat} (corresponding to any $\bp \in \rr$ instead of just $\bp = \mu \bone$), but the general theory continues to hold. The original source for these notions is \cite{N Shuffle, N R-matrix}, inspired by \cite{FHHSY}.

\medskip

\begin{definition}
	\label{def:slope}
	
For any $\bp \in \rr$, we will say that
	
	\medskip
	
	\begin{itemize}[leftmargin=*]
		
		\item $E \in \CS_\bn$ has \textbf{slope} $\geq \bp$ if the following limit is finite for all $\b0 < \bm \leq \bn$
		\begin{equation}
			\label{eqn:slope e geq}
			\lim_{\xi \rightarrow 0} \frac {E(\xi z_{i1},\dots,\xi z_{im_i},z_{i,m_{i+1}},\dots,z_{in_i})}{\xi^{\bp \cdot \bm}}
		\end{equation}

\medskip
		
\item $E \in \CS_\bn$ has \textbf{slope} $\leq \bp$ if the following limit is finite for all $\b0 < \bm \leq \bn$
\begin{equation}
	\label{eqn:slope e leq}
\lim_{\xi \rightarrow \infty} \frac {E(\xi z_{i1},\dots,\xi z_{im_i},z_{i,m_{i+1}},\dots,z_{in_i})}{\xi^{\bp \cdot \bm}}
\end{equation}
		
		\medskip
		
		\item $F \in \CS_{-\bn}$ has \textbf{slope} $\leq \bp$ if the following limit is finite for all $\b0 < \bm \leq \bn$
		\begin{equation}
			\label{eqn:slope f leq}
			\lim_{\xi \rightarrow 0} \frac {F(\xi z_{i1},\dots,\xi z_{im_i},z_{i,m_{i+1}},\dots,z_{in_i})}{\xi^{-\bp \cdot \bm}}
		\end{equation}
		
\medskip
		
\item $F \in \CS_{-\bn}$ has \textbf{slope} $\geq \bp$ if the following limit is finite for all $\b0 < \bm \leq \bn$
\begin{equation}
\label{eqn:slope f geq}
\lim_{\xi \rightarrow \infty} \frac {F(\xi z_{i1},\dots,\xi z_{im_i},z_{i,m_{i+1}},\dots,z_{in_i})}{\xi^{-\bp \cdot \bm}}
\end{equation}
		
	\end{itemize}

\end{definition}

\medskip

\noindent We will write $\CS^\pm_{\geq \bp}$ and $\CS^\pm_{\leq \bp}$ for the subalgebras of elements of $\CS^\pm$ of slope $\geq \bp$ and $\leq \bp$, respectively. We will denote their graded components by
\begin{align}
	&\CS^\pm_{\geq \bp}=\bigoplus_{\bn\in \nn} \CS_{\geq \bp|\pm \bn}=\bigoplus_{\bn\in \nn} \bigoplus_{d\in \BZ} \CS_{\geq \bp|\pm \bn,d} \label{eqn:shuffle slope geq} \\
	&\CS^\pm_{\leq \bp}=\bigoplus_{\bn\in \nn} \CS_{\leq \bp|\pm \bn}=\bigoplus_{\bn\in \nn} \bigoplus_{d\in \BZ} \CS_{\leq \bp|\pm \bn,d} \label{eqn:shuffle slope leq}
\end{align}
For any $\br \in \zz$, the following formulas are easy exercises
\begin{equation}
	\label{eqn:easy exercise}
\sigma_{\br} ( \CS^\pm_{\geq \bp} ) = \CS^\pm_{\geq \bp+\br} \qquad \text{and} \qquad \sigma_{\br} ( \CS^\pm_{\leq \bp} ) = \CS^\pm_{\leq \bp+\br} 
\end{equation}
If the limits in \eqref{eqn:slope e geq}-\eqref{eqn:slope f geq} are all 0, then we will say that $E$ and $F$ have slopes $<\bp$ or $>\bp$, respectively. All statements in the previous paragraph hold for the subalgebras $\CS^\pm_{> \bp}$ and $\CS^\pm_{< \bp}$ of elements of slope $> \bp$ and $< \bp$, respectively.

\medskip

\subsection{Slope subalgebras}
\label{sub:slope subalgebras}

If a shuffle element $X$ simultaneously has slope $\leq \bp$ and $\geq \bp$, then $\vdeg X = \bp \cdot \hdeg X$. The set of such elements will be denoted by
\begin{equation}
	\label{eqn:slope subalgebra}
	\CB_{\bp}^\pm = \mathop{\bigoplus_{(\bn,d) \in \nn \times \BZ}}_{d = \pm \bp \cdot \bn} \CB_{\bp|\pm \bn}
\end{equation}
and will be called a \textbf{slope subalgebra}. As a consequence of \eqref{eqn:easy exercise}, we have 
\begin{equation}
\label{eqn:sigma on b}
\sigma_{\br} ( \CB^\pm_{\bp} ) = \CB^\pm_{\bp+\br}
\end{equation}
for all $\bp \in \rr$ and $\br \in \zz$. For any fixed $\bp \in \rr$,
\begin{equation}
\label{eqn:enlarged slope}
\CB_{\bp}^\geq = \CB_{\bp}^+ \otimes \BC[\kappa_i]_{i \in I} \qquad \text{and} \qquad \CB_{\bp}^\leq = \BC[\kappa_i^{-1}]_{i \in I} \otimes \CB_{\bp}^-
\end{equation}
are subalgebras of $\CS^\geq$ and $\CS^\leq$, as long as we impose the commutation relation
\begin{equation}
	\label{eqn:k shuffle bis}
	\kappa_\bm X = X \kappa_\bm q^{(\hdeg X, \bm)}
\end{equation}
between the tensor factors, for all $X \in \CS^\pm $ and $\bm = (m_i)_{i \in I} \in \zz$, where we denote $\kappa_\bm = \prod_{i\in I} \kappa_i^{m_i}$. We can make \eqref{eqn:enlarged slope} into Hopf algebras, using the coproduct
\begin{equation}
\label{eqn:coproduct kappa}
\Delta_{\bp}(\kappa_i) = \kappa_i \otimes \kappa_i 
\end{equation}
for all $i \in I$, and
\begin{align}
	&\Delta_\bp(E) = \sum_{\b0 \leq \bm \leq \bn} (\kappa_{\bn-\bm} \otimes 1) (\text{value of the limit \eqref{eqn:slope e geq}}) \label{eqn:coproduct slope plus} \\
	&\Delta_\bp(F) = \sum_{\b0 \leq \bm \leq \bn} (\text{value of the limit \eqref{eqn:slope f leq}}) (1 \otimes \kappa_{\bm}^{-1}) \label{eqn:coproduct slope minus}
\end{align} 
$\forall E \in \CB^+_\bp, F \in \CB^-_\bp$. In the limits referenced in the formulas above, we place the $\otimes$ sign between the variables $\{z_{i1},\dots,z_{im_i}\}_{i \in I}$ and $\{z_{i,m_i+1},\dots,z_{in_i}\}_{i \in I}$ within any monomial. By analogy with \cite[Proposition V.3]{N Thesis}, we have the following.

\medskip

\begin{proposition}
	\label{prop:slope subalgebra}

For any $\bp \in \rr$, we may consider the Drinfeld double
\begin{equation}
\label{eqn:double slope}
\CB_\bp = \CB_\bp^\geq \otimes \CB_\bp^\leq 
\end{equation}
with respect to the Hopf algebra structure above and the restriction of \eqref{eqn:enhanced pairing} to 
\begin{equation}
\label{eqn:really restricted}
\CB_{\bp}^{\geq} \otimes \CB_{\bp}^{\leq} \rightarrow \BC
\end{equation}
Then the inclusion $\CB_\bp = \CB_\bp^\geq \otimes \CB_\bp^\leq  \subset \CS^\geq \otimes \CS^\leq = \CS$ is an algebra homomorphism, although not a bialgebra homomorphism.

\end{proposition}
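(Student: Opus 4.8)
The plan is to reduce the statement to a single compatibility between the slope coproduct $\Delta_\bp$ of \eqref{eqn:coproduct slope plus}--\eqref{eqn:coproduct slope minus} and the full coproduct $\Delta$ of \eqref{eqn:coproduct shuffle plus}--\eqref{eqn:coproduct shuffle minus}, mediated by the pairing \eqref{eqn:enhanced pairing}. Since $\CB_\bp^\geq$ and $\CB_\bp^\leq$ are by construction subalgebras of $\CS^\geq$ and $\CS^\leq$, the two inclusions $\CB_\bp^\geq\hookrightarrow\CS^\geq$ and $\CB_\bp^\leq\hookrightarrow\CS^\leq$ are already algebra homomorphisms. Consequently the only relations that could fail are the cross relations between the two halves, namely the Drinfeld-double straightening rule \eqref{eqn:drinfeld double}. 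I would therefore fix $a\in\CB_\bp^\geq$ and $b\in\CB_\bp^\leq$ and show that the element $ba\in\CS$ produced by \eqref{eqn:drinfeld double} using the full $\Delta$ and the full pairing coincides with the element produced by the same formula using $\Delta_\bp$ and the restricted pairing \eqref{eqn:really restricted}, the latter being by definition the product in $\CB_\bp$.

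The key input is that $\Delta_\bp$ is the leading term of $\Delta$ for the vertical grading \eqref{eqn:hdeg vdeg}. For $E\in\CB^+_\bp\cap\CS_\bn$, expanding the denominators in \eqref{eqn:coproduct shuffle plus} in the region $|z_{ia}|\ll|z_{jb}|$ shows that, for each $\b0\leq\bm\leq\bn$, the leg of $\Delta(E)$ in left horizontal degree $\bm$ has vertical degree $\geq\bp\cdot\bm$, the minimal value $\bp\cdot\bm$ being attained exactly by the limit \eqref{eqn:slope e geq} recorded in \eqref{eqn:coproduct slope plus}. Thus $\Delta(E)=\Delta_\bp(E)+(\text{terms whose left leg has vertical degree }>\bp\cdot\bm)$, and the analogue for $F\in\CB^-_\bp$ follows symmetrically from \eqref{eqn:coproduct shuffle minus} and \eqref{eqn:slope f leq}. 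The second ingredient is that the pairing \eqref{eqn:enhanced pairing} is graded: it vanishes on $\CS_{\bn,d}\otimes\CS_{-\bn,d'}$ unless $d+d'=0$, as one reads off from \eqref{eqn:pairing shuffle minus} by extracting the coefficient of $\prod_a z_a^0$ and noting that every $\zeta$-factor is homogeneous of total degree $0$.

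Combining these, I would argue that in the expansion of \eqref{eqn:drinfeld double} the pairings $\langle a_1,S(b_1)\rangle$ and $\langle a_3,b_3\rangle$ force the coupled legs of $\Delta^{(2)}(a)$ and $\Delta^{(2)}(b)$ to have vertical degree exactly $\pm\bp\cdot(\text{their horizontal degree})$; any correction term, having strictly larger vertical degree on the relevant leg, pairs to zero against the slope-$\bp$ elements $a,b$. Hence only the diagonal contributions survive, which are precisely those recorded by the iterated slope coproducts, and coassociativity of $\Delta_\bp$ identifies them with $\Delta_\bp^{(2)}(a)$ and $\Delta_\bp^{(2)}(b)$. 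The surviving middle factors $a_2\in\CB_\bp^\geq$ and $b_2\in\CB_\bp^\leq$ are already in the order $\CS^\geq\cdot\CS^\leq$, so $a_2b_2$ needs no further straightening and lands in $\CB_\bp$, while the Cartan monomials $\kappa_{\bn-\bm}$ and $\kappa_\bm^{-1}$ match by the very definition \eqref{eqn:coproduct slope plus}--\eqref{eqn:coproduct slope minus}. This yields the equality of the two products, proving the inclusion is an algebra map; it fails to be a coalgebra (hence bialgebra) map precisely because $\Delta_\bp\neq\Delta|_{\CB_\bp}$ whenever the above correction terms are nonzero.

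The main obstacle I anticipate is making the leading-term comparison $\Delta=\Delta_\bp+(\text{higher})$ fully rigorous and stable under the threefold coproduct $\Delta^{(2)}$: one must verify that the $\xi\to0$ limits defining $\Delta_\bp$ interact correctly with the series expansion of the denominators and with the reordering of the Cartan series $\ph_j^\pm(z_{jb})$ into the monomials $\kappa_{\bn-\bm}$ and $\kappa_\bm^{-1}$, and that the vertical-degree bound is preserved under iteration. Everything else — the subalgebra property of the two halves, the gradedness of the pairing, and the coassociativity of $\Delta_\bp$ — is either immediate or a routine verification along the lines of \cite[Proposition V.3]{N Thesis}.
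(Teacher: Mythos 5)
The paper offers no proof of this proposition at all --- it is asserted ``by analogy with \cite[Proposition V.3]{N Thesis}'' --- so your write-up is being measured against the argument that citation stands in for, not against anything in the text. Your strategy is the right one, and its two pillars are correct: (i) once the Cartan modes are graded so that $\ph^\pm_{i,u}$ has vertical degree $\pm u$ (as forced by \eqref{eqn:shuffle plus minus commute}), every term of $\Delta(E)$, $E \in \CB_{\bp|\bn}$, with left horizontal degree $\bm$ has left vertical degree $\geq \bp \cdot \bm$, and the part where equality holds is precisely $\Delta_\bp(E)$, with the opposite inequalities for $F \in \CB_\bp^-$; and (ii) the pairing \eqref{eqn:enhanced pairing} vanishes unless horizontal and vertical degrees both sum to zero, including on the Cartan block since $\langle \ph^+_{i,d}, \ph^-_{j,d'}\rangle \neq 0$ forces $d=d'$. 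These two facts do force all four outer legs in \eqref{eqn:drinfeld double} onto the diagonal, and by coassociativity the diagonal part of $\Delta^{(2)}$ is $\Delta^{(2)}_\bp$. Note that the same two lemmas are needed one step earlier than you use them: for the double $\CB_\bp$ to be defined at all, the restriction \eqref{eqn:really restricted} must be a bialgebra pairing for $\Delta_\bp$, i.e. \eqref{eqn:bialgebra 1}-\eqref{eqn:bialgebra 2} must hold with $\Delta$ replaced by $\Delta_\bp$; this is the same diagonal argument and is worth stating explicitly.

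The one genuine gap is the antipode. After your reduction, the product $ba$ computed in $\CS$ equals $\sum \langle a_{1'}, S(b_{1'})\rangle\, a_{2'}b_{2'}\, \langle a_{3'}, b_{3'}\rangle$, summed over the slope coproducts, but with $S$ still the antipode of $\CS^\leq$; the product in the double $\CB_\bp$ uses $S_\bp$, the antipode of the Hopf algebra $(\CB_\bp^\leq, \Delta_\bp)$. These are genuinely different maps on $\CB_\bp^\leq$ (an antipode is determined by its coproduct, and $\Delta|_{\CB_\bp^\leq} \neq \Delta_\bp$), so identifying the surviving terms with the $\CB_\bp$-product still requires $\langle x, S(y)\rangle = \langle x, S_\bp(y)\rangle$ for all $x \in \CB_\bp^\geq$, $y \in \CB_\bp^\leq$, which your proposal never addresses. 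Fortunately it follows from your own two lemmas: applying the diagonal argument to $\sum \langle x_1, S(y_1)\rangle \langle x_2, y_2\rangle = \varepsilon(x)\varepsilon(y)$ and to $\sum \langle x_1, y_1\rangle \langle x_2, S(y_2)\rangle = \varepsilon(x)\varepsilon(y)$ shows that $\langle \cdot, S(\cdot)\rangle$ is a two-sided convolution inverse of the restricted pairing in the convolution algebra of bilinear forms on $\CB_\bp^\geq \times \CB_\bp^\leq$ built from $\Delta_\bp$; so is $\langle \cdot, S_\bp(\cdot)\rangle$ by the definition of $S_\bp$; and two-sided convolution inverses are unique. Alternatively, formulate the double without antipodes via the cross relation $\sum \langle a_1, b_1\rangle\, a_2 b_2 = \sum b_1 a_1\, \langle a_2, b_2\rangle$, to which your argument applies verbatim and the issue disappears. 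With this supplement, and with the Cartan-mode bookkeeping you flagged carried out as in (i), the proof is complete.
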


\medskip

\subsection{Factorizations I}
\label{sub:factorizations}

Slope subalgebras are important because they are the building blocks of shuffle algebras, as we will now show. A parameterized curve $\bp(t) = (p_i(t))_{i \in I}$ in $\rr$ will be called \textbf{catty-corner} if
\begin{equation}
\label{eqn:cartty-corner 1}
t_1 < t_2 \quad \text{implies} \quad p_i(t_1) < p_i(t_2) , \forall i \in I
\end{equation}
The domain of a catty-corner curve can be any interval $J \subset \BR$, either open, closed or half-open. We also allow the domain of a catty-corner curve to extend up to $\pm \infty$, but in this case we will require the extra property that
\begin{equation}
\label{eqn:cartty-corner 2}
\lim_{t \rightarrow \pm \infty} p_i(t) = \pm \infty, \forall i \in I
\end{equation}

\medskip

\begin{proposition}
\label{prop:factor}

For any catty-corner curve $\bp : \BR \rightarrow \rr$, we have isomorphisms
\begin{equation}
	\label{eqn:factorization 1}	
	\bigotimes^{\rightarrow}_{t \in \BR} \CB^\pm_{\bp(t)} \xrightarrow{\sim} \CS^\pm \xleftarrow{\sim} \bigotimes^{\leftarrow}_{t \in \BR} \CB^\pm_{\bp(t)}  
\end{equation}
where $\rightarrow$ and $\leftarrow$ means that we take the tensor product in increasing and decreasing order of $t$, respectively. The isomorphisms in \eqref{eqn:factorization 1} are given by multiplication \footnote{As a vector space, $\otimes^{\rightarrow \text{ or } \leftarrow}_{t \in \BR} \CB^\pm_{\bp(t)}$ has a basis consisting of $\otimes^{\rightarrow \text{ or } \leftarrow}_{t \in \BR} b^\pm_{\bp(t), s_t}$, where $b^\pm_{\bp(t),s_t}$ run over homogeneous bases of $\CB^\pm_{\bp(t)}$ such that $b^\pm_{\bp(t),s_t} = 1$ for all but finitely many $t \in \BR$.}.

\end{proposition}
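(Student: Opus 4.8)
The plan is to show that the multiplication map in \eqref{eqn:factorization 1} is a bijection by treating the crossing of a single slope ``wall'' as the essential case and assembling the full curve as a colimit. First I would note that the map is homogeneous for the grading \eqref{eqn:graded pieces}: a factor in $\CB^\pm_{\bp(t)}$ of horizontal degree $\pm\bn_t$ automatically has vertical degree $\pm\bp(t)\cdot\bn_t$. Since the footnote requires all but finitely many tensor factors to equal $1$, each element of the ordered tensor product has finite support, so its image is a genuine (uncompleted) element of $\CS^\pm$ and no convergence issue arises. Moreover, any fixed shuffle element $X\in\CS_{\pm\bn}$ is a single rational function, so the limits of Definition \ref{def:slope} are finite once $\bp$ has all components sufficiently negative (for slope $\geq\bp$) or sufficiently positive (for slope $\leq\bp$); hence $\CS^\pm=\bigcup_{\bp\leq\bp'}\big(\CS^\pm_{\geq\bp}\cap\CS^\pm_{\leq\bp'}\big)$. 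Using \eqref{eqn:cartty-corner 2}, letting $t_0\to-\infty$ and $t_1\to+\infty$ then realizes $\CS^\pm$ as the colimit of the finite-interval pieces, so it suffices to prove the finite-interval factorization $\bigotimes^{\rightarrow}_{t\in[t_0,t_1]}\CB^\pm_{\bp(t)}\xrightarrow{\sim}\CS^\pm_{\geq\bp(t_0)}\cap\CS^\pm_{\leq\bp(t_1)}$; by associativity of the shuffle product (splitting $[t_0,t_1]$ at interior points) this reduces further to the single-wall factorization
\[
\CS^\pm_{\geq\bp}\xrightarrow{\ \sim\ }\CB^\pm_\bp\otimes\CS^\pm_{>\bp},
\]
together with its mirror for $\leq\bp$.

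Second, I would prove the single-wall factorization via the slope limits of \eqref{eqn:slope e geq}--\eqref{eqn:slope f geq}, which serve as ``leading-term'' operations. For $E\in\CS_{\geq\bp|\bn}$, the total-scaling limit $E_0=\lim_{\xi\to0}\xi^{-\bp\cdot\bn}E(\xi z_{\bullet})$ is finite and scale-covariant of weight $\bp\cdot\bn$, and the slope formalism of \cite{N Shuffle, N R-matrix} identifies $E_0$ as an element of $\CB^+_\bp$; more precisely, the partial limits over all $\b0\leq\bm\leq\bn$, packaged by the coproduct $\Delta_\bp$ of \eqref{eqn:coproduct slope plus}, produce a decomposition $E=\sum b\ast c$ with $b\in\CB^+_\bp$ and $c\in\CS^+_{>\bp}$, which gives surjectivity. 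For injectivity, I would invoke the strict monotonicity \eqref{eqn:cartty-corner 1}: given a relation among products $b_{t_1}\ast\cdots\ast b_{t_k}$ with fixed $t_1<\cdots<t_k$ and $b_{t_j}\in\CB^\pm_{\bp(t_j)}$, the leading-term limit at the extreme slope detects only the outermost factor — because the slope of a shuffle product is controlled by the leading behavior of its factors and the slopes $\bp(t_j)$ are pairwise distinct — so peeling factors off one at a time recovers each $b_{t_j}$ and shows the relation is trivial. The $-$ case is identical working in $\CV^{\mathrm{op}}$, and the decreasing-order isomorphism of \eqref{eqn:factorization 1} follows by running the same argument from the opposite end, peeling off the highest slope first.

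The main obstacle is precisely the single-wall surjectivity: the assertions that the leading term $E_0$ genuinely lies in $\CB^\pm_\bp$ and that the resulting decomposition $E=\sum b\ast c$ exhausts $E$. Both rest on the multiplicativity of the leading-term operation — that the leading slope behavior of a shuffle product equals the appropriately interpreted product of the leading behaviors of its factors. This is exactly the statement that $\Delta_\bp$ is an algebra homomorphism, which underlies the Drinfeld double structure of Proposition \ref{prop:slope subalgebra}, and it is established for the diagonal slopes $\bp=\mu\bone$ in \cite[Proposition V.3]{N Thesis} and \cite{N R-matrix}. The only feature of the catty-corner curve entering the argument is the strict coordinatewise monotonicity \eqref{eqn:cartty-corner 1}, which is valid for arbitrary $\bp\in\rr$, so the diagonal proof transfers verbatim to the present generality. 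The remaining bookkeeping — compatibility of the finite-interval factorizations under the colimit and exhaustion of $\CS^\pm$ via \eqref{eqn:cartty-corner 2} — is then routine.
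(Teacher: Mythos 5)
Your skeleton is reasonable and, in spirit, close to the paper's: exhausting $\CS^\pm$ by the pieces $\CS^\pm_{\geq \bp}\cap\CS^\pm_{\leq \bp'}$ and crossing one slope wall at a time is exactly the content of \eqref{eqn:factorization 2}--\eqref{eqn:factorization 3}, and your injectivity sketch is the paper's peeling argument. But there is a genuine gap at the step you yourself call ``the main obstacle'': the single-wall surjectivity, i.e.\ that every $E\in\CS^+_{\geq\bp|\bn}$ decomposes as $\sum b*c$ with $b\in\CB^+_\bp$ and $c\in\CS^+_{>\bp}$, is never actually proved, and none of the devices you offer can produce it. (i) The total-scaling limit $E_0=\lim_{\xi\to 0}\xi^{-\bp\cdot\bn}E(\xi z_\bullet)$ carries no information: by homogeneity it equals $0$ when $\vdeg E>\bp\cdot\bn$ and equals $E$ itself when $\vdeg E=\bp\cdot\bn$, and in the latter case $E$ lies in $\CB^+_\bp$ for elementary reasons (homogeneity applied to the complementary variable group shows $\CB_{\bp|\bn}=\CS_{\geq\bp|\bn,\,\bp\cdot\bn}$); so ``identifying $E_0$ as an element of $\CB^+_\bp$'' is trivially true and useless for a decomposition. (ii) The partial limits \eqref{eqn:slope e geq} for $\b0<\bm<\bn$ do carry the relevant information, but they produce the leading \emph{tensor components of} $\Delta(E)$, not a factorization of $E$ inside $\CS^+$; converting the former into the latter is the entire mathematical content of the proposition. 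Moreover $\Delta_\bp$ of \eqref{eqn:coproduct slope plus} is only defined on $\CB^+_\bp$, so ``packaging the limits by $\Delta_\bp$'' is circular for a general $E\in\CS^+_{\geq\bp}$. (iii) Multiplicativity of $\Delta_\bp$ / Proposition \ref{prop:slope subalgebra} is a statement about the Hopf structure of the single subalgebra $\CB_\bp$ and likewise does not yield the decomposition; your injectivity argument leans on the same unproved ``the slope of a product is controlled by the leading behavior of its factors'', which is precisely Claim \ref{claim:2}.

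What fills this hole in the paper is a concrete double induction that your proposal replaces by the assertion that the diagonal-case proof of \cite[Proposition 3.14]{N R-matrix} ``transfers verbatim'': one characterizes $\CS^+_{\geq\bp(t)}$, $\CS^+_{\leq\bp(t)}$, $\CB^+_{\bp(t)}$ by where $\Delta(E)$ lives (\eqref{eqn:naive 1}--\eqref{eqn:naive 3}), in terms of the \emph{naive slope} --- the unique $t$ with $\bp(t)\cdot\bn=d$; Claim \ref{claim:1} shows the extreme summand $E_1\otimes E_2$ of $\Delta(E)$ (maximal naive slope of $E_2$, then maximal $\hdeg$) has $E_1$ of slope $<t'$ and $E_2\in\CB^+_{\bp(t')}$; Claim \ref{claim:2} shows that subtracting products $X''*X'$ strictly decreases this extreme part; and the induction on horizontal degree (which is what legitimizes those subtractions) terminates because $\{t:\bp(t)\cdot\bm\in\BZ \text{ for some }\b0<\bm\leq\bn\}$ is discrete. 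Note also that your claim that only the monotonicity \eqref{eqn:cartty-corner 1} enters is inaccurate: the existence (not just uniqueness) of naive slopes, equivalently the exhaustion of $\CS^\pm$ by your finite-interval pieces, needs \eqref{eqn:cartty-corner 2}, and your ``routine bookkeeping'' (compatibility of single-wall factorizations with the upper cutoff $\CS^\pm_{\leq\bp(t_1)}$ and iteration over the walls in $[t_0,t_1]$) is again an instance of the same subtraction-induction. In short: the architecture is right, but the core argument of the proposition --- the mechanism converting coproduct leading terms into product decompositions --- is missing from your proposal.
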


\medskip

\begin{proof} (following \cite[Proposition 3.14]{N R-matrix}). We will prove the $\rightarrow$ statement for $\pm = +$, and leave the other cases as exercises to the reader. We start by proving by induction on $\bn \in \nn$ that the multiplication map induces a surjection
\begin{equation}
\label{eqn:iso proof}
\mathop{\sum_{\{\bn_t \in \nn\}_{t \in \BR}, \text{ all but finitely many}}}_{\text{of which are }\b0, \text{ and }\sum_{t\in \BR} \bn_t = \bn} \ \bigotimes^\rightarrow_{t \in \BR} \CB_{\bp(t)|\bn_t} \twoheadrightarrow \CS_{\bn}
\end{equation}
To this end, note that the two assumptions that make a catty-corner curve (the fact that it is increasing and that it goes to infinity in the positive/negative orthant of $\rr$) imply that for any $(\bn,d) \in (\nn \backslash \b0) \times \BZ$ there exists a unique $t \in \BR$ such that
$$
\bp(t) \cdot \bn = d
$$
We will call this value of $t$ the \textbf{naive slope} (with respect to the curve $\bp(t)$) of any shuffle element of degree $(\bn,d)$. It is easy to see that having naive slope $<, \leq, =, \geq, >$ a certain real number is a multiplicative property. With this in mind, we note that the defining property of $\CS_{\geq \bp(t)}^+$, $\CS_{\leq \bp(t)}^+$ and $\CB^+_{\bp(t)}$ is that they consist of shuffle elements $E$ such that
\begin{align} 
&\Delta(E) \in \Big(\text{naive slope}\geq t \Big) \ \woo \ \Big(\text{anything} \Big) \label{eqn:naive 1} \\
&\Delta(E) \in \Big(\text{anything} \Big) \ \woo \ \Big(\text{naive slope}\leq t \Big) \label{eqn:naive 2} 
\end{align}
and
\begin{equation}
\label{eqn:naive 3}
\Delta(E) \in \Big(\text{naive slope}\geq t \Big) \otimes \Big(\text{naive slope}\leq t \Big)	
\end{equation}
respectively (we say that such $E$ have slope $\geq t$, $\leq t$ and $t$, respectively). In all relations above, we write $\CB_{\bp(\infty)}^+ = \BC[\ph_{i,d}^+]^{i \in I}_{d \geq 0}$, which is in line with the fact that the first tensor factor of $\Delta(E)$ contains elements from the middle subalgebra of \eqref{eqn:double shuffle}.

\medskip

\noindent Let us now consider an element $E \in \CS_{\bn}$ of naive slope $t$, and let us consider the largest $t'$ such that $\Delta(E)$ contains a non-zero summand $E_1 \otimes E_2$ with $E_2$ of naive slope $t'$. Among such summands, we assume that $E_2$ has maximal horizontal degree. It is clear that we must have $t' \geq t$; we may actually assume that $t' > t$, because if $t' = t$ then $E$ would already lie in the subalgebra $\CB_{\bp(t)}^+$. 

\medskip

\begin{claim}
\label{claim:1}
$E_1$ must have slope $<t'$ and $E_2$ must have slope $t'$.
\end{claim}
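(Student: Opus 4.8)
The plan is to work entirely with the coproduct characterization of slopes recorded in \eqref{eqn:naive 1}--\eqref{eqn:naive 3}: having slope $\geq t'$, $\leq t'$, or $=t'$ means, respectively, that the first, the second, or both tensor factors of the coproduct have naive slope $\geq t'$, $\leq t'$. Two elementary observations will be used repeatedly. First, since $E \in \CS_\bn = \CS^+_\bn$ is a pure shuffle element, every second tensor factor appearing in any iterated coproduct is again a pure shuffle element (the $\ph$'s always accumulate in the left factors), so its naive slope is unambiguous; the Cartan dressing sits in the first factors and is assigned naive slope $+\infty$. Second, because $\hdeg$ and $\vdeg$ are additive across the coproduct and $\bp(\cdot)$ is strictly increasing by \eqref{eqn:cartty-corner 1}, any pure shuffle element $X$ of naive slope exactly $u$ all of whose coproduct second factors have naive slope $\leq u$ automatically has all its coproduct first factors of naive slope $\geq u$, hence lies in $\CB^+_{\bp(u)}$ (and symmetrically). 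The engine of every step will be coassociativity $\Delta^{(2)}(E) = (\Delta \otimes \text{Id})\Delta(E) = (\text{Id}\otimes\Delta)\Delta(E)$, read in a fixed trigrading and combined with linear independence of a basis of first (or third) factors of a given bidegree to peel off the middle tensor factor.

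I would first prove that $E_2$ has slope $\leq t'$. Reading $\Delta^{(2)}(E)$ as $(\Delta\otimes\text{Id})\Delta(E)$, the third tensor factor is literally a second factor of $\Delta(E)$, hence of naive slope $\leq t'$ by the very definition of $t'$; therefore the graded piece of $\Delta^{(2)}(E)$ whose third factor has any fixed bidegree of naive slope $>t'$ vanishes. Reading the same piece as $(\text{Id}\otimes\Delta)\Delta(E) = \sum E_1 \otimes \Delta(E_2)$ and projecting the first tensor factor onto a fixed bidegree, I write that component of $\Delta(E)$ as $\sum_k A_k \otimes B_k$ with the $A_k$ linearly independent; peeling them off forces $\Delta(B_k)$ to have no second factor of that bidegree, for each $k$. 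Ranging over all bidegrees of naive slope $>t'$ and all first-factor bidegrees shows that every $E_2$ occurring in $\Delta(E)$ has slope $\leq t'$. Since our chosen $E_2$ has naive slope exactly $t'$, the monotonicity observation upgrades this to $E_2 \in \CB^+_{\bp(t')}$, which is the second half of the Claim.

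For the first half, that $E_1$ has slope $<t'$, I again read $\Delta^{(2)}(E)$ in both ways, but now I project the \emph{third} factor onto the bidegree $(\bd, \bp(t')\cdot\bd)$ of $E_2$, where $\bd = \hdeg E_2$. Via $(\text{Id}\otimes\Delta)\Delta(E)$, any third factor of this bidegree is a naive-slope-$t'$, horizontal-degree-$\bd$ second coproduct factor of some $E_2^{\text{big}}$ (a second factor of $\Delta(E)$, of naive slope $u \leq t'$); a short computation with additivity of degrees then shows the accompanying middle factor has naive slope $<t'$ when $u<t'$, while if $u=t'$ the maximality of $\hdeg E_2$ forces $\hdeg E_2^{\text{big}} = \bd$, so the middle factor is pure Cartan dressing. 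Thus in this graded piece all middle factors have naive slope $<t'$ or horizontal degree $\b0$. Reading the same piece as $(\Delta\otimes\text{Id})\Delta(E) = \sum \Delta(E_1)\otimes E_2$ and peeling off linearly independent bases of the first and third factors identifies the middle factors with the second coproduct factors of the $E_1$'s, and in particular of the $E_1$ paired with our $E_2$; these therefore have naive slope $<t'$ (or vanishing horizontal degree), i.e. $E_1$ has slope $<t'$.

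The step I expect to be the main obstacle is precisely the middle-factor analysis in the last paragraph: disentangling, at the graded piece where the third factor sits in $E_2$'s bidegree, the contributions of $E_2^{\text{big}}$ of naive slope $u=t'$ from those of $u<t'$. This is where the maximality of $\hdeg E_2$ is indispensable — without it, a same-slope $E_2^{\text{big}}$ of strictly larger horizontal degree could contribute a middle factor of naive slope $\geq t'$ and wreck the conclusion — and it is also where the $\ph$-dressing bookkeeping (treating vanishing horizontal degree as naive slope $+\infty$, harmless for the slope-$<t'$ statement) must be handled carefully. By contrast, once the middle factors are pinned down, the slope inequalities themselves are immediate from additivity and the catty-corner monotonicity of $\bp(\cdot)$.
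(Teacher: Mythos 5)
Your proof is correct and takes essentially the same route as the paper's: coassociativity of $\Delta$ combined with the maximality of $t'$ (for the statement about $E_2$) and the maximality of $\hdeg E_2$ among naive-slope-$t'$ second factors (for the statement about $E_1$), finished off by the observation that naive slope $t'$ together with slope $\leq t'$ forces membership in $\CB^+_{\bp(t')}$. The only differences are expository: you argue directly rather than by contradiction, you spell out the linear-independence ``peeling'' and the degree bookkeeping that the paper leaves implicit, and along the way you establish the slightly stronger fact that \emph{all} second tensor factors of $\Delta(E)$ have slope $\leq t'$, not just the maximal one.
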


\medskip

\noindent Indeed, if $E_1$ failed to have slope $< t'$, then there would exist a summand in $\Delta(E_1)$ with second tensor factor of naive slope $\geq t'$, and the coassociativity of $\Delta$ would contradict the maximality of $t'$ and $E_2$. Similarly, if $E_2$ would fail to have slope $\leq t'$, then there would exist a summand in $\Delta(E_2)$ with second tensor factor of naive slope $>t'$, and the coassociativity of $\Delta$ would contadict the maximality of $t'$. Since having slope $\leq t'$ and naive slope $t'$ implies having slope $t'$, Claim \ref{claim:1} follows.

\medskip

\begin{claim}
\label{claim:2}

If $E'$ has slope $t'$ and $E''$ has slope $<t'$, then
\begin{equation}
\label{eqn:claim}
\Delta(E'' * E') \in E'' \kappa_{\emph{hdeg} E'}  \otimes E' + 
\end{equation}
$$
+ \Big(\text{anything}\Big) \otimes \Big(``\text{naive slope} < t'" \text{ or } ``\text{naive slope }t' \text{ and }\text{hdeg} < \hdeg E' " \Big)
$$

\end{claim}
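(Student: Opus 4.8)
The plan is to use that $\Delta$ is an algebra homomorphism into $\CS^\geq \woo \CS^\geq$ (with the ordinary tensor-product multiplication), so that $\Delta(E'' * E') = \Delta(E'') \cdot \Delta(E')$, and then to read off the second tensor factor of every cross term. First I would isolate the relevant pieces of each factor by bidegree. Since $E'$ has slope $t'$, equation \eqref{eqn:naive 3} gives $\Delta(E') \in (\text{naive slope} \geq t') \otimes (\text{naive slope} \leq t')$, and the components whose second tensor factor has full horizontal degree $\hdeg E'$ are precisely those coming from the $\bm = \b0$ summand of \eqref{eqn:coproduct shuffle plus}, i.e. $E'$ multiplied by the monomials produced when the Cartan series $\ph^+_j(z_{jb})$ are expanded. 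Among these, the unique one whose second factor also has naive slope $t'$ must have second factor of bidegree $(\hdeg E', \bp(t')\cdot \hdeg E') = \deg E'$, hence equals $E'$ itself (every nontrivial monomial strictly lowers the vertical degree, and therefore the naive slope); its first factor is the leading Cartan term $\kappa_{\hdeg E'}$. Thus I may write
\[
\Delta(E') = \kappa_{\hdeg E'} \otimes E' + \sum C \otimes D,
\]
where each $D$ has naive slope $\leq t'$ and $D \neq E'$; since $\hdeg D \leq \hdeg E'$ always, each such $D$ is either of naive slope $<t'$, or of naive slope $t'$ with $\hdeg D < \hdeg E'$. Dually, as $E''$ has slope $<t'$, the only second factor of horizontal degree $\b0$ is the $\bm = \hdeg E''$ summand $E'' \otimes 1$, so $\Delta(E'') = E'' \otimes 1 + \sum A \otimes B$ with every $B$ of positive horizontal degree and naive slope $<t'$.

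Next I would expand $\Delta(E'')\cdot\Delta(E')$ into four families of cross terms and track the second tensor factor of each using $(X \otimes Y)(Z \otimes W) = XZ \otimes YW$. The term $(E'' \otimes 1)(\kappa_{\hdeg E'} \otimes E') = E'' \kappa_{\hdeg E'} \otimes E'$ is exactly the announced leading term of \eqref{eqn:claim}. The remaining three families have second factor equal to $D$, to $B * E'$, or to $B * D$. The $D$-terms lie in the error space by the previous paragraph. For the other two, I invoke the multiplicativity of naive slope noted above together with the strictness coming from $\hdeg B > \b0$: the product of a positive-degree element of naive slope $<t'$ with one of naive slope $\leq t'$ has naive slope $<t'$, so $B * E'$ and $B * D$ both have naive slope $<t'$. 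Hence all three families land in $(\text{anything}) \otimes (``\text{naive slope} < t'" \text{ or } ``\text{naive slope } t' \text{ and } \hdeg < \hdeg E'")$, which is \eqref{eqn:claim}. Finally one checks that $E'$ arises as a second tensor factor only in the leading term: the three error families produce second factors of either strictly smaller naive slope (the $B*E'$ and $B*D$ families) or naive slope $t'$ with strictly smaller horizontal degree (the $D$-family), none of which can equal $E'$.

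The step I expect to be the main obstacle is the bidegree bookkeeping in the first paragraph: making precise that expanding $\ph^+_j(z_{jb})$ in the $\bm = \b0$ summand of \eqref{eqn:coproduct shuffle plus} contributes, at naive slope $t'$ and full horizontal degree, only the single component $\kappa_{\hdeg E'} \otimes E'$, and correctly carrying the $\kappa$-factors that get shuffled through the tensor factors via \eqref{eqn:k shuffle}. It is cleanest to argue this componentwise, since $\Delta$ preserves total bidegree and all slope conditions refer to naive slopes of bihomogeneous components; this sidesteps the question of whether the individual monomial-modified summands of the formula lie in $\CS^+$. Everything else reduces to the elementary fact that naive slope strictly decreases upon multiplication by a positive-degree element of smaller naive slope, combined with the standing correspondence \eqref{eqn:naive 1}--\eqref{eqn:naive 3} between slope and the naive slopes of the tensor factors of $\Delta$.
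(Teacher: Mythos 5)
Your proposal is correct and follows essentially the same route as the paper's proof: both rest on the multiplicativity of $\Delta$, the characterizations \eqref{eqn:naive 1}--\eqref{eqn:naive 3} of slope in terms of naive slopes of tensor factors, and the observation that only the summand $E'' \otimes 1$ of $\Delta(E'')$ can contribute cross terms whose second factor has naive slope $t'$. The paper states this in two sentences and leaves the leading-term identification $\kappa_{\hdeg E'} \otimes E'$ inside $\Delta(E')$ implicit, whereas you spell it out; this is elaboration, not a different argument.
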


\medskip 

\noindent This claim is an easy consequence of the fact that coproduct is multiplicative, and the assumptions made imply that
\begin{align*} 
&\Delta(E') \in \Big(\text{naive slope}\geq t' \Big) \otimes \Big(\text{naive slope}\leq t' \Big) \\
&\Delta(E'') \in \Big(\text{anything} \Big) \otimes \Big(\text{naive slope} < t' \Big) 
\end{align*} 
Thus, only the summand $E'' \otimes 1$ from $\Delta(E'')$ can produce terms which have second tensor factor of naive slope $t'$. This establishes Claim \ref{claim:2}.

\medskip

\noindent We will now prove the induction step of the isomorphism \eqref{eqn:iso proof}. Finitely many applications of Claims \ref{claim:1} and \ref{claim:2} allow us to construct
$$
\tilde{E} = E - X'' * X' - \dots - Y'' * Y' 
$$
(where $X'', \dots,Y''$ have slope $<t'$ and $X', \dots,Y'$ have slope $t'$) with $\Delta(\tilde{E})$ only having second tensor factors of naive slope $<t'$. We may then repeat this algorithm for naive slopes $t'' \in (t,t')$ (while there are infinitely many such $t''$, only finitely many of them can take part in this algorithm, since naive slope is constrained by the equation $\bp(t'') \cdot \bm \in \BZ$ for $\b0 < \bm \leq \bn$) to construct 
$$
\bar{E} = E - X'' * X' - \dots - Y'' * Y' - \dots - Z'' * Z' - \dots - T'' * T'
$$
such that $\Delta(\bar{E})$ only has second tensor factors of naive slope $\leq t$ . Therefore, after finitely many subtractions of products of the form $E'' * E'$ with $\text{slope}(E'') < \text{slope}(E')$ (all of these products are in the image of the map \eqref{eqn:iso proof} by the induction hypothesis) then we can transform $E$ into an element $\overline{E}$ of naive slope $t$, all of whose second tensor factors have naive slope $\leq t$. As we have already seen, such an element $\overline{E}$ must lie in $\CB_{\bp(t)|\bn}$, thus establishing the induction step of the surjectivity of \eqref{eqn:iso proof}.

\medskip

\noindent To show that the map \eqref{eqn:iso proof} is injective, we also invoke Claim \ref{claim:1} as follows: suppose for the purpose of contradiction there exists a non-trivial relation of the form
\begin{equation}
\label{eqn:non-trivial relation}
\sum \text{coefficient} \cdot E_{t_1}^{(k_1)} * \dots * E_{t_u}^{(k_u)} = 0
\end{equation} 
where $E_{t_s}^{(k_{t_s})}$ go over fixed homogeneous bases of $\CB_{\bp(t_s)}^+$, of horizontal degree $> \b0$, and all terms in \eqref{eqn:non-trivial relation} have the property that $t_1<\dots<t_u$. We assume that the total horizontal degree of \eqref{eqn:non-trivial relation} is minimal so that a non-trivial relation exists. If we consider the summands in the relation above with maximal $t_u$, and among those summands we consider those with maximal hdeg, then Claim \ref{claim:2} implies that
$$
\sum_{\text{maximal summands}} \text{coefficient} \cdot \Big( E_{t_1}^{(k_1)} * \dots * E_{t_{u-1}}^{(k_{u-1})} \Big) \otimes E_{t_u}^{(k_u)} = 0
$$
By the minimality of the horizontal degree in \eqref{eqn:non-trivial relation}, there are no non-trivial relations between the shuffle elements in the parenthesis above, which contradicts \eqref{eqn:non-trivial relation}. \end{proof}

\medskip

\subsection{Factorizations II}

It is easy to see that the argument in the proof of Proposition \ref{prop:factor} actually proves the following analogue of  \eqref{eqn:factorization 1} 
\begin{equation}
\label{eqn:factorization 2}
\bigotimes^{\rightarrow }_{t \in (t_1,t_2)} \CB^\pm_{\bp(t)} \xrightarrow{\sim} \CS^\pm_{> \bp(t_1)} \cap  \CS^\pm_{< \bp(t_2)} \xleftarrow{\sim} \bigotimes^{\leftarrow }_{t \in (t_1,t_2)} \CB^\pm_{\bp(t)}
\end{equation}
for any $t_1 < t_2$ (including $\pm \infty$), as well as the natural analogues of \eqref{eqn:factorization 2} when some endpoints of the intervals may be closed instead of open (see \cite[Proposition 3.14]{N R-matrix} for a proof of this more general statement in a related context). Because we can always find a catty-corner curve passing through any pair of points $\bp_1 < \bp_2$ in $\rr$, \eqref{eqn:factorization 2} implies that the multiplication map induces isomorphisms
\begin{equation}
\label{eqn:factorization 3}
\left(  \CS^\pm_{> \bp} \cap  \CS^\pm_{\leq \bp'} \right) \otimes \left(  \CS^\pm_{> \bp'} \cap  \CS^\pm_{< \bp''} \right) \xrightarrow{\sim} \CS^\pm_{> \bp} \cap  \CS^\pm_{< \bp''} \xleftarrow{\sim} \left(  \CS^\pm_{> \bp'} \cap  \CS^\pm_{< \bp''} \right) \otimes \left(  \CS^\pm_{> \bp} \cap  \CS^\pm_{\leq \bp'} \right)
\end{equation}
for any $\bp \leq \bp' < \bp''$ in $\rr$ (we allow $\bp = -\binfty$ or $\bp'' = \binfty$, in which case we make the convention that $\CS_{>-\binfty}^\pm = \CS_{<\binfty}^\pm = \CS^\pm$ in \eqref{eqn:factorization 3}).

\medskip

\begin{remark}
\label{rem:stable}

For simply laced $\fg$, we expect that $\CB_{\bp}$ correspond to the slope subalgebras defined in \cite{OS} using stable envelopes of Nakajima quiver varieties, and that the factorization \eqref{eqn:factorization 1} corresponds to the one introduced in loc. cit. using the FRT formalism (see \cite{Zhu 1, Zhu 2} for the case of finite and affine type A).

\end{remark}

\medskip

\noindent The left-most factorization in \eqref{eqn:factorization 1} respects the pairing \eqref{eqn:pairing shuffle}, in the sense that
\begin{equation}
\label{eqn:pair slopes basic}
\left \langle \prod_{t \in \BR}^{\rightarrow} E_t, \prod_{t \in \BR}^{\rightarrow} F_t \right \rangle = \prod_{t \in \BR} \langle E_t, F_t \rangle 
\end{equation}
for all $\{E_t \in \CB^+_{\bp(t)}, F_t \in \CB^-_{\bp(t)}\}_{t \in \BR}$ (almost all of which are 1). The proof of \eqref{eqn:pair slopes basic} is exactly the same as that of \cite[Proposition 3.12]{N R-matrix}, which dealt with $\bp(t) = t \bone$. Formula \eqref{eqn:pair slopes basic} and the non-degeneracy of the pairing \eqref{eqn:pairing shuffle} implies that the restriction
\begin{equation}
\label{eqn:really restricted 2}
\CB_{\bp}^{+} \otimes \CB_{\bp}^{-} \rightarrow \BC
\end{equation}
is also non-degenerate for any $\bp \in \rr$.

\medskip

\subsection{The half subalgebras}
\label{sub:slopes}

The main use of slopes for our purposes is given by 
\begin{align}
	&\CA^\geq =  \CS^-_{<\b0}  \otimes \BC[\ph_{i,0}^+,\ph_{i,1}^+,\ph_{i,2}^+,\dots]_{i \in I} \otimes \CS^+_{\geq \b0} \label{eqn:slope zero plus} \\
	&\CA^\leq = \CS^+_{<\b0}  \otimes \BC[\ph_{i,0}^-,\ph_{i,1}^-,\ph_{i,2}^-,\dots]_{i \in I} \otimes \CS^-_{\geq \b0} \label{eqn:slope zero minus}
\end{align}
which (\cite[Propositions 3.18, 3.21]{N Cat}) are subalgebras of $\CS$ and provide factorizations
\begin{equation}
	\label{eqn:a triangular}
	\CA^{\geq} \otimes \CA^{\leq} \xrightarrow{\sim} \CS \xleftarrow{\sim} \CA^{\leq} \otimes \CA^{\geq}
\end{equation}
The statements above would hold with $\b0$ replaced by any $\bp \in \rr$. However, the importance of using $\b0$ stems from the following result (\cite[Proposition 3.23]{N Cat}).

\medskip

\begin{proposition}
	\label{prop:slope affine}
	
	If $\fg$ is of finite type, then there is an isomorphism
	\begin{equation}
		\label{eqn:xi}
		\Xi : \UUaff \xrightarrow{\sim} \CS
	\end{equation}
	that sends the subalgebra $\UUaffg$ onto $\CA^\geq$ and  $\UUaffl$ onto $\CA^\leq$ (see \cite[Section 2]{N Cat} for a review of quantum affine algebras in our notation).
	
\end{proposition}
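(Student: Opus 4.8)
The plan is to realize $\Xi$ as a composite that passes through the Drinfeld loop presentation. Beck's isomorphism $\beta$ between the Drinfeld--Jimbo and Drinfeld (``new'') presentations identifies $\UUaff$ with the quantum loop algebra $\UU$; composing with the shuffle isomorphism $\Upsilon$ of \eqref{eqn:upsilon} yields an algebra isomorphism $\Xi = \Upsilon \circ \beta : \UUaff \xrightarrow{\sim} \CS$. Since $\CA^\geq$ and $\CA^\leq$ are subalgebras of $\CS$ (recalled before \eqref{eqn:a triangular}) and $\UUaffg, \UUaffl$ are generated as algebras by the finitely many affine Chevalley generators, it suffices to (i) track the images under $\Xi$ of these generators and check they land in the claimed subalgebras, and then (ii) upgrade the resulting inclusions to equalities.

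For step (i), recall that $\beta$ sends the finite Chevalley generators to loop generators of vertical degree $0$, while the affine node (whose simple root is $\delta - \theta$, with $\theta$ the highest root of $\fg$) is sent to a root vector carrying one unit of the loop grading. With the loop grading normalized so that $\delta$ contributes vertical degree $+1$, this gives
$$
\Xi(E_i)=e_{i,0}\in\CS^+_{\geq\b0},\quad \Xi(K_i^{\pm 1})=\kappa_i^{\pm 1},\quad \Xi(E_0)\in\CS^-_{<\b0}\cdot\BC[\kappa_i^{\pm 1}],\quad \Xi(F_0)\in\CS^+_{<\b0}\cdot\BC[\kappa_i^{\pm 1}]
$$
for $i \in I$, where $\Xi(E_0)$ has horizontal degree $-\theta$ and vertical degree $+1$, and $\Xi(F_0)$ has horizontal degree $+\theta$ and vertical degree $-1$. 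The memberships for $e_{i,0}$ (slope $0 \geq \b0$ since $\vdeg = 0$) and for the grouplike $\kappa_i^{\pm 1}$ (middle factor of $\CA^\geq$, respectively $\CA^\leq$) are immediate from degree bookkeeping. Because $\CS^+_{\geq\b0}$, $\CS^-_{<\b0}$ and the Cartan factor are subalgebras, once every generator is placed the whole Borel satisfies $\Xi(\UUaffg) \subseteq \CA^\geq$, and symmetrically $\Xi(\UUaffl) \subseteq \CA^\leq$.

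The main obstacle is the slope membership of the affine generators, i.e. the assertions $\Xi(E_0) \in \CS^-_{<\b0} \cdot \BC[\kappa_i^{\pm 1}]$ and $\Xi(F_0) \in \CS^+_{<\b0}\cdot\BC[\kappa_i^{\pm 1}]$. By Definition \ref{def:slope} one must verify that all the partial-rescaling limits \eqref{eqn:slope f leq} vanish, and this is \emph{false} for a generic loop-degree-one vector of horizontal degree $-\theta$: it holds only for the precise iterated $q$-commutator that Beck's construction produces, which is why $\beta(E_0)$ must be used in its exact form. For instance, when $\fg = \fsl_3$ (so $\theta = \bs^1 + \bs^2$) the correct shuffle element is the multiple of $z_{11}z_{21}/(z_{11}-z_{21})$ singled out by the requirement that the numerator be divisible by each variable, and one checks directly that the individual products $f_{i,0}*f_{j,1}$ fail the slope test while their $q$-bracket passes it. In general I would combine Beck's inductive description of the affine root vector with the coproduct characterization of slopes from Subsection \ref{sub:factorizations} --- a shuffle element has slope $<\b0$ exactly when its coproduct components all have the appropriate naive slope --- thereby converting the limit computation into a statement about where the coproduct of a root vector is supported.

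Finally, for step (ii) I would upgrade the inclusions to equalities by a graded-dimension argument. On the affine side $\UUaffg$ and $\UUaffl$ generate $\UUaff$, so surjectivity of $\Xi$ gives $\CA^\geq \cdot \CA^\leq = \Xi(\UUaffg)\cdot\Xi(\UUaffl) = \CS$. On the shuffle side, the factorization \eqref{eqn:a triangular} asserts that multiplication $\CA^\geq \otimes \CA^\leq \xrightarrow{\sim} \CS$ is an isomorphism; restricting this injective multiplication to $\Xi(\UUaffg)\otimes\Xi(\UUaffl)$ yields a map which is still injective and, by the previous sentence, surjective, hence an isomorphism. Comparing the two factorizations of $\CS$ in the $\zz \times \BZ$-grading by horizontal and vertical degree --- where all graded pieces are finite-dimensional --- the termwise inequalities $\dim \Xi(\UUaffg)_\alpha \leq \dim \CA^\geq_\alpha$ and $\dim \Xi(\UUaffl)_\beta \leq \dim \CA^\leq_\beta$ must all be equalities, since their convolutions agree in every degree. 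Therefore $\Xi(\UUaffg) = \CA^\geq$ and $\Xi(\UUaffl) = \CA^\leq$, which completes the proof.
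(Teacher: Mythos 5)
Your architecture matches the expected route, but you should first note what you are being compared against: the paper itself contains no proof of this proposition --- it is quoted directly from \cite[Proposition 3.23]{N Cat}, so your proposal is a blind reconstruction of the argument in that reference. The skeleton (Beck's isomorphism $\beta$ composed with $\Upsilon$ from \eqref{eqn:upsilon}, bookkeeping of the affine Chevalley generators, then upgrading inclusions to equalities against the factorization \eqref{eqn:a triangular}) is the natural and correct one, and your $\fsl_3$ computation correctly isolates the key phenomenon: the numerator of the shuffle incarnation of the affine root vector must be divisible by every variable, and individual monomial products like $f_{1,0}*f_{2,1}$ genuinely fail the test while the $q$-bracket passes it. However, there is a genuine gap exactly at the step you yourself flag as the main obstacle. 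The only nontrivial content of the proposition is that $\Upsilon(\beta(E_0))$, up to its Cartan factor, lies in $\CS^-_{<\b0}$ --- i.e.\ that \emph{all} the rescaling limits \eqref{eqn:slope f leq} at $\bp=\b0$ vanish for Beck's root vector $x^-_{\theta,1}$ in arbitrary finite type --- and symmetrically for $F_0$. Your treatment of this consists of one rank-two example plus the sentence ``in general I would combine Beck's inductive description with the coproduct characterization of slopes.'' That is a plan, not a proof: for general (in particular non-simply-laced) $\fg$ the root vector is a long iterated $q$-commutator, the required cancellations must be propagated inductively along the braid-group construction, and the characterization \eqref{eqn:naive 1}--\eqref{eqn:naive 3} is formulated in the paper for the $+$ side and for naive slopes along a catty-corner curve, so the precise equivalence you want for elements of $\CS^-$ would itself need to be stated and verified. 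Until this is carried out, the inclusion $\Xi(\UUaffg)\subseteq\CA^\geq$ is not established.

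There is also a concrete error in your step (ii), though it is repairable. You assert that in the $\zz\times\BZ$-grading of $\CS$ by horizontal and vertical degree ``all graded pieces are finite-dimensional,'' and this is false: the elements $e_{i,d}\,\kappa_i\,f_{i,-d}$ for $d\in\BZ$ are linearly independent in $\CS$ (by the triangular decomposition \eqref{eqn:double shuffle}) and all sit in bidegree $(\b0,0)$, so the termwise-inequality-of-convolutions argument has no finite quantities to compare. The conclusion survives via pure linear algebra with no dimension count: since $\Xi(\UUaffg)\otimes\Xi(\UUaffl)\subseteq\CA^\geq\otimes\CA^\leq$, and the composite of this inclusion with the injective multiplication map of \eqref{eqn:a triangular} is surjective (because $\UUaff=\UUaffg\cdot\UUaffl$ by the triangular decomposition of the quantum affine algebra, as you note), the inclusion of tensor products is forced to be an equality; and an equality $V_1\otimes V_2=W_1\otimes W_2$ with $V_i\subseteq W_i$ and $W_i\neq 0$ forces $V_1=W_1$ and $V_2=W_2$ (pick $w\in W_1\setminus V_1$ and a complement to see the contradiction). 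You should substitute this argument for the graded-dimension comparison.
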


\bigskip

\section{Dimensions}
\label{sec:dimensions}

\medskip

\subsection{Dimension of slope subalgebras I}

For any Kac-Moody Lie algebra $\fg$, let us define the integers  $\{a_{\fg,\bn}\}_{\bn \in \nn}$ recursively by the following formula 
\begin{equation}
	\label{eqn:conj hor}
	\sum_{\bn \in \nn} \dim_{\BC}(\CB_{\b0|\bn}) q^{\bn} = \prod_{\bn \in \nn \backslash \b0} \left( \frac 1{1-q^{\bn} } \right)^{a_{\fg,\bn}}
\end{equation}
where $\{q^{\bn}\}_{\bn \in \zz}$ are formal symbols such that
\begin{equation}
\label{eqn:formal symbols}
q^{\bn+\bm} = q^{\bn} q^{\bm}
\end{equation}
for all $\bn,\bm \in \zz$. 

\medskip

\begin{remark} Although beyond the scope of the present paper, $\CB_{\b0}$ is a $q$-Borcherds algebra, and therefore the PBW theorem implies that $a_{\fg,\bn}$ are non-negative integers. \end{remark}

\medskip

\noindent For the remainder of the present paper, we make the following assumption
\begin{equation}
	\label{eqn:conj}
	\sum_{\bn \in \nn} \sum_{d=0}^{\infty} \dim_{\BC} \left(\CS_{\geq \b0|\bn,d} \right) q^{\bn} v^d \stackrel{?}= \prod_{\bn \in \nn \backslash \b0} \prod_{d=0}^{\infty} \left( \frac 1{1- q^{\bn} v^d} \right)^{a_{\fg,\bn}}
\end{equation}
This conjectural formula holds when $\fg$ is of finite type, as shown in \cite[formula (5.43)]{N Cat} (in this case, $a_{\fg,\bn}$ is 1 if $\bn$ is a positive root of $\fg$ and 0 otherwise). 

\medskip

\begin{proposition}
\label{prop:slope dim}

Subject to \eqref{eqn:conj}, we have for any $\bp \in \rr$
\begin{equation}
\label{eqn:prop hor}
\mathop{\sum_{\bn \in \nn}}_{\text{s.t. }\bp\cdot \bn \in \BZ} \dim_{\BC}(\CB_{\bp| \bn}) q^{ \bn} v^{\bp \cdot \bn} = \mathop{\prod_{\bn \in \nn \backslash \b0}}_{\text{s.t. } \bp \cdot \bn \in \BZ} \left( \frac 1{1-q^{\bn} v^{\bp \cdot \bn}} \right)^{a_{\fg,\bn}}
\end{equation}

\end{proposition}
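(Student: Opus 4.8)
The plan is to reduce the computation of $\CB_\bp$ to the slope-zero data encoded in the assumption \eqref{eqn:conj}, using the factorization of $\CS^+_{\geq\b0}$ along a ray through $\bp$ and exploiting the variable $v$ to separate the slopes. First I would record two preliminary simplifications. Since every element of $\CB_{\bp|\bn}$ has vertical degree exactly $\bp\cdot\bn$, the variable $v$ carries no information internal to a fixed graded piece: the left-hand side of \eqref{eqn:prop hor} is just $\sum_\bn \dim_\BC(\CB_{\bp|\bn})q^\bn$ with each $q^\bn$ decorated by the forced power $v^{\bp\cdot\bn}$. Second, the shift $\sigma_\br : \CB^+_\bp \xrightarrow{\sim} \CB^+_{\bp+\br}$ fixes horizontal degree and multiplies the vertical degree in horizontal degree $\bn$ by $\br\cdot\bn$, so it identifies $\dim_\BC(\CB_{\bp|\bn}) = \dim_\BC(\CB_{\bp+\br|\bn})$ and sends the summand $q^\bn v^{\bp\cdot\bn}$ to $q^\bn v^{(\bp+\br)\cdot\bn}$; as $\bp\cdot\bn\in\BZ \Leftrightarrow (\bp+\br)\cdot\bn\in\BZ$ and $a_{\fg,\bn}$ is intrinsic, the identities \eqref{eqn:prop hor} for $\bp$ and for $\bp+\br$ are carried into one another by the substitution $v^{\bp\cdot\bn}\mapsto v^{(\bp+\br)\cdot\bn}$. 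Hence it suffices to prove \eqref{eqn:prop hor} after translating $\bp$ by a large $\br\in\zz$, and I may assume $\bp>\b0$.

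With $\bp>\b0$, the ray $\bp(t) = t\bp$ is a catty-corner curve, so the factorization \eqref{eqn:factorization 2} (left endpoint closed at $t=0$, where $\bp(0)=\b0$, and right endpoint $+\infty$, where $\bp(t)\to\binfty$) supplies an isomorphism of graded vector spaces $\bigotimes^{\rightarrow}_{t\geq 0}\CB^+_{t\bp} \xrightarrow{\sim} \CS^+_{\geq\b0}$. Writing $Z_\bp(q,v) := \sum_\bn \dim_\BC(\CB_{\bp|\bn})q^\bn v^{\bp\cdot\bn}$ and $D(q,v) := \sum_{\bn,d}\dim_\BC(\CS_{\geq\b0|\bn,d})q^\bn v^d$, taking graded dimensions gives the unconditional identity $D = \prod_{t\geq 0} Z_{t\bp}$ (a well-defined power series, since for each fixed $\bn$ only finitely many factors contribute, cf. the footnote to Proposition \ref{prop:factor}). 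By the assumption \eqref{eqn:conj}, moreover, $D = \prod_{\bn\neq\b0}\prod_{d\geq 0}(1-q^\bn v^d)^{-a_{\fg,\bn}}$.

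The heart of the argument is the extraction of the single factor $Z_\bp$ from the product $\prod_{t\geq0}Z_{t\bp}$, and here $v$ does the essential work. Because $\CB_{t\bp|\bn}$ is concentrated in vertical degree $t\,(\bp\cdot\bn)$, the coefficient of $q^\bn$ in $Z_{t\bp}$ is a single monomial in $v$; this property survives multiplication and hence the formal logarithm, so $[q^\bn]\log Z_{t\bp} = c_{t\bp,\bn}\, v^{t(\bp\cdot\bn)}$ for a scalar $c_{t\bp,\bn}$. Applying $\log$ to $D = \prod_{t\geq0}Z_{t\bp}$ yields $[q^\bn]\log D = \sum_{t\geq 0} c_{t\bp,\bn}\, v^{t(\bp\cdot\bn)}$, and since $t\mapsto t(\bp\cdot\bn)$ is strictly increasing for $\bn\neq\b0$, distinct slopes contribute distinct powers of $v$; matching powers then isolates each $c_{t\bp,\bn}$ as a coefficient of $\log D$. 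Computing the latter from \eqref{eqn:conj} gives $[q^\bn v^e]\log D = \sum_{k\geq 1,\ \bn/k\in\nn,\ k\mid e}\frac{a_{\fg,\bn/k}}{k}$ for integers $e\geq 0$. Specializing to $t=1$, i.e. to the slope $\bp$ itself, carries the power $v^{\bp\cdot\bn}$; when $\bp\cdot\bn\in\BZ$, setting $e=\bp\cdot\bn$ and noting $k\mid(\bp\cdot\bn)\Leftrightarrow\bp\cdot(\bn/k)\in\BZ$ gives $c_{\bp,\bn} = \sum_{k\geq1,\ \bn/k\in\nn,\ \bp\cdot(\bn/k)\in\BZ}\frac{a_{\fg,\bn/k}}{k}$, while when $\bp\cdot\bn\notin\BZ$ the power $v^{\bp\cdot\bn}$ is non-integral, cannot occur in $\log D$, and forces $c_{\bp,\bn}=0$. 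In both cases $c_{\bp,\bn}$ equals the coefficient of $q^\bn$ in the logarithm of the right-hand side of \eqref{eqn:prop hor}; as this holds for every $\bn$, exponentiating proves \eqref{eqn:prop hor} for $\bp>\b0$, and the reduction of the first paragraph extends it to all $\bp\in\rr$.

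I expect the main obstacle to be justifying that the continuum product $\prod_{t\geq0}Z_{t\bp}$ genuinely decouples slope-by-slope, so that each monomial $q^\bn v^e$ of $\log D$ is fed by a \emph{unique} slope. The observation that unlocks this is exactly the refinement by $d$ that $\chi^{\br}_{\mathrm{ref}}$ records through $v$: it pins each $\CB_{\bp|\bn}$ to the single vertical degree $\bp\cdot\bn$, so the logarithm separates the slopes cleanly and the strict monotonicity of $t\mapsto t(\bp\cdot\bn)$ makes the readout unambiguous. The remaining bookkeeping — the divisor sums arising from $\log$ of the infinite products and the equivalence $k\mid(\bp\cdot\bn)\Leftrightarrow\bp\cdot(\bn/k)\in\BZ$ — is routine, as is the verification that the edge cases $\bp\cdot\bn\notin\BZ$ are consistent with the empty contribution they make to the product in \eqref{eqn:prop hor}.
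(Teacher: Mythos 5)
Your proposal is correct, and it shares the paper's skeleton --- the reduction to $\bp > \b0$ by the shift automorphisms \eqref{eqn:sigma on b}, the factorization \eqref{eqn:factorization 2} along the ray $\bp(t) = t\bp$, and the identification of the total graded dimension of $\CS^+_{\geq \b0}$ via the assumption \eqref{eqn:conj} (the paper's \eqref{eqn:graded dimension full}) --- but the extraction of the single factor at $t=1$ is done by a genuinely different device. The paper argues by minimal counterexample: assuming the slope-$t$ identity \eqref{eqn:graded dimension} fails, it picks a minimal horizontal degree $\bm$ and then a minimal $t'$ at which a discrepancy occurs, and derives a contradiction by comparing the coefficient of $q^{\bm} v^{t'\bp\cdot\bm}$ in \eqref{eqn:up to}, to which only the $t=t'$ factor on one side and the $d = t'\bp\cdot\bn$ factors on the other can contribute. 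You instead take the formal logarithm of the product identity and observe that the coefficient of $q^{\bn}$ in $\log Z_{t\bp}$ is a single monomial $c_{t\bp,\bn}\,v^{t(\bp\cdot\bn)}$, so distinct slopes feed distinct powers of $v$ and each $c_{t\bp,\bn}$ can be read off in closed form as a divisor sum; exponentiating recovers \eqref{eqn:prop hor}. Both arguments turn on exactly the same key observation --- within a slope subalgebra the vertical degree is the prescribed linear function of the horizontal degree, so $v$ separates the slopes --- and your version buys explicit formulas and avoids the induction, at the cost of needing slightly more care that the continuum product and the sum $\sum_{t \geq 0} \log Z_{t\bp}$ are well-defined formal series: finiteness holds coefficient-by-coefficient in $q^{\bn} v^d$, not for fixed $\bn$ alone as your parenthetical suggests (for fixed $\bn$ infinitely many slopes $t$ can carry nonzero $\CB_{t\bp|\bm}$ with $\bm \leq \bn$, just in distinct $v$-degrees). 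Also, the shift $\sigma_{\br}$ \emph{adds} $\br\cdot\bn$ to the vertical degree in horizontal degree $\bn$ rather than ``multiplies'' it. These are cosmetic slips, not gaps.
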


\medskip

\begin{proof} By applying a shift automorphism $\sigma_{\br}$ with $\br = (r_i \gg 0)_{i \in I}$ and invoking \eqref{eqn:sigma on b}, we may assume that all the entries of $\bp$ are positive (when performing the shift by $\br$, we also change the variables according to $q^{\bn} \mapsto q^{\bn} v^{\br\cdot \bn}$ in \eqref{eqn:prop hor}). In this case, let us consider the factorization \eqref{eqn:factorization 2} for the straight line $\bp(t) = t \bp$
$$
\bigotimes_{t \in [0,\infty)}^{\rightarrow} \CB_{t\bp}^+ \stackrel{\sim}\longrightarrow \CS^+_{\geq \b0}
$$
Then formula \eqref{eqn:conj} implies that
\begin{equation}
\label{eqn:graded dimension full}
\prod_{t \in [0,\infty)} \left( \mathop{\sum_{\bn \in \nn}}_{\text{s.t. }t \bp \cdot \bn \in \BN} \dim_{\BC} \left(\CB_{t \bp|\bn} \right) q^{\bn} v^{t \bp \cdot \bn} \right) = \mathop{\prod_{\bn \in \nn \backslash \b0}}_{d \in \BN} \left( \frac 1{1- q^{\bn} v^d} \right)^{a_{\fg,\bn}}
\end{equation}
We will show that the formula above implies that for all $t \geq 0$, we have
\begin{equation}
\label{eqn:graded dimension}
\mathop{\sum_{\bn \in \nn}}_{\text{s.t. }t \bp \cdot \bn \in \BN} \dim_{\BC} \left(\CB_{t \bp|\bn} \right) q^{\bn} v^{t \bp \cdot \bn} = \mathop{\prod_{\bn \in \nn \backslash \b0}}_{\text{s.t. }d = t\bp \cdot \bn \in \BN} \left( \frac 1{1- q^{\bn} v^d} \right)^{a_{\fg,\bn}}
\end{equation}
(the $t=1$ case of which is \eqref{eqn:prop hor}, and so concludes the proof of Proposition \ref{prop:slope dim}). Indeed, assume the contrary and choose a minimal $\bm \in \nn$ (with respect to the partial order in Subsection \ref{sub:basic}) such the LHS and RHS of \eqref{eqn:graded dimension} differ in horizontal degree $\bm$, and then choose a minimal $t' \in [0,\infty)$ for which such a difference occurs (this is well-defined because the set of $t' \geq 0$ for which $\CB_{t' \bp|\bn} \neq 0$ for $\bn \leq \bm$ is discrete). Then \eqref{eqn:graded dimension full} together with \eqref{eqn:graded dimension} for $t < t'$ imply that
\begin{equation}
\label{eqn:up to}
\prod_{t \in [t',\infty)} \left( \mathop{\sum_{\bn \in \nn \text{ s.t.}}}_{t \bp \cdot \bn \in \BN} \dim_{\BC} \left(\CB_{t \bp|\bn} \right) q^{\bn} v^{t \bp \cdot \bn} \right) =_{\bm} \mathop{\prod_{\bn \in \nn \backslash \b0}}_{d \in \BZ_{\geq t' \bp \cdot \bn}} \left( \frac 1{1- q^{\bn} v^d} \right)^{a_{\fg,\bn}}
\end{equation}
where $=_{\bm}$ means that the coefficients of all $\{q^{\bn}\}_{\bn \leq \bm}$ in the LHS and RHS match. However, only the $t=t'$ term of the LHS and the $d=t'\bp \cdot \bn$ term of the RHS contribute to the $q^{\bm} v^{t' \bp \cdot \bm}$ term of equation \eqref{eqn:up to}, which contradicts our choice of $\bm$ and $t'$. \end{proof}

\medskip

\subsection{Dimension of slope subalgebras II}

We still operate under assumption \eqref{eqn:conj}. Having established \eqref{eqn:prop hor} for all $\bp \in \rr$, the factorization \eqref{eqn:factorization 2} implies that
\begin{equation}
\label{eqn:formula}
\sum_{\bn \in \nn} \sum_{d \in \BZ} \dim_{\BC} \left(\CS_{> \bp_1|\bn,d} \cap \CS_{< \bp_2|\bn,d}\right) q^{\bn} v^d = \mathop{\prod_{\bn \in \nn \backslash \b0}}_{d \in \BZ_{>\bp_1 \cdot \bn} \cap \BZ_{< \bp_2 \cdot \bn}} \left( \frac 1{1- q^{\bn} v^d} \right)^{a_{\fg,\bn}}
\end{equation}
for all $\bp_1 < \bp_2$ in $\rr$ (note that both sides of the equation above are independent on the choice of catty-corner curve which goes into the factorization \eqref{eqn:factorization 2}). The formula above also holds when $\bp_1 = -\binfty$ exclusive or $\bp_2 = \binfty$ (with the RHS expanded in either $v^{-1}$ or $v$) and when we replace $>$ and/or $<$ by $\geq$ and/or $\leq$ in both sides of the equation. Finally, note that the identity assignment on shuffle elements yields an algebra anti-automorphism 
$$
\CB_\bp^+ \stackrel{\sim}\rightarrow \CB_{-\bp}^-
$$
which implies the natural analogues of \eqref{eqn:conj hor}, \eqref{eqn:conj}, \eqref{eqn:prop hor}, \eqref{eqn:formula} for $\CS^-$ instead of $\CS^+$.

\medskip

\begin{definition}
\label{def:generic}
	
We call $\bp \in \rr$ \textbf{generic} if
$$
\Big\{\bn \in \zz \Big| \bp \cdot \bn \in \BZ \Big\} = \begin{cases} 0 &\text{or } \\ \BZ \bm &\text{for some }\bm \in \zz \backslash \b0 \end{cases}
$$
A catty-corner curve $\bp(t)$ will be called \textbf{generic} if $\bp(t)$ is generic for all $t$ (the terminology is due to the fact that such a curve does not pass through any intersection of two hyperplanes of the form $\bp \cdot \bn = d$ for various $\bn \in \nn \backslash \b0$ and $d \in \BZ$).

\medskip 

\end{definition}

\noindent Proposition \ref{prop:slope dim} shows that if $\bp$ is generic, then
\begin{equation}
\label{eqn:simple prop}
\sum_{\bn \in \nn} \dim_{\BC}(\CB_{\bp| \bn}) q^{ \bn} v^{\bp \cdot \bn} = \prod_{k=1}^{\infty} \left( \frac 1{1-q^{k \bm} v^{k \bp \cdot \bm}} \right)^{a_{\fg, k\bm}}
\end{equation}
for some fixed $\bm \in \nn$. If moreover $\fg$ is of finite type, the fact that $a_{\fg,\bn}$ is non-zero only if $\bn$ is a positive root (in which case $a_{\fg,\bn}=1$) shows that $\CB_{\bp}$ has the same graded dimension as $U_q(\fsl_2)$ (specifically, dimension 1 in horizontal degrees $\bn \in \BN \bm$ and dimension 0 in all other horizontal degrees). We will show in Subsection \ref{sub:identifying} that these algebras are in fact isomorphic.

\medskip

\subsection{Pairing}

Formula \eqref{eqn:pair slopes basic} deals with the orthogonality of the pairing with respect to factorizations of $\CS^+$ and $\CS^-$ defined with respect to one and the same catty-corner curve. However, the following result pertains to a more general setting.

\medskip

\begin{proposition}
\label{prop:key}

Subject to \eqref{eqn:conj}, for any generic $\bp_1 < \bp_2$ in $\rr$, consider the following restriction \footnote{The notation $\langle\cdot,\cdot\rangle_{\bp_1,\bp_2}$ is only for bookkeeping reasons; the pairing in \eqref{eqn:p1 p2} is none other than the appropriate restriction of the usual pairing $\CS^+ \otimes \CS^- \rightarrow \BC$.} of the pairing \eqref{eqn:pairing shuffle} twisted by the antipode 
\begin{equation}
\label{eqn:p1 p2}
\CS_{\geq \b0}^+ \otimes \Big(\CS_{>\bp_1}^- \cap \CS_{<\bp_2}^- \Big)\xrightarrow{\langle\cdot,S(\cdot)\rangle_{\bp_1,\bp_2}} \BC
\end{equation}
We have
\begin{equation}
\label{eqn:key}
\sum_{\bn \in \nn} \sum_{d=0}^{\infty} \dim_{\BC} \left( \frac {\CS_{\geq \b0|\bn,d}}{\emph{Ker } \langle\cdot,S(\cdot)\rangle_{\bp_1,\bp_2}} \right) q^{\bn}v^d = \mathop{\prod_{\bn \in \nn \backslash \b0}}_{d \in \BZ_{> \bp_1 \cdot \bn} \cap \BZ_{<\bp_2 \cdot \bn} \cap \BN}  \left( \frac 1{1- q^{\bn} v^d} \right)^{a_{\fg,\bn}}
\end{equation}

\end{proposition}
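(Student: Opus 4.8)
The plan is to compute the graded dimension of the quotient $\CS_{\geq \b0|\bn,d}/\text{Ker}\langle\cdot,S(\cdot)\rangle_{\bp_1,\bp_2}$ by identifying the quotient with a concrete sub-object of the shuffle algebra, so that Proposition \ref{prop:slope dim} (in its consequence \eqref{eqn:formula}) can be applied. The guiding idea is that the kernel of a non-degenerate pairing is trivial, so the main task is to understand exactly \emph{how} degenerate the restricted pairing \eqref{eqn:p1 p2} is, and to show that its rank in each bidegree $(\bn,d)$ equals $\dim_\BC(\CS^+_{>\bp_1|\bn,d}\cap\CS^+_{<\bp_2|\bn,d})$ — at which point \eqref{eqn:key} follows immediately by comparing with \eqref{eqn:formula} for $\CS^+$.

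First I would use the factorization \eqref{eqn:factorization 3} with the chain $-\binfty \leq \b0 \le \bp_1 < \bp_2$ to split
$$
\CS^+_{\geq \b0} \;\cong\; \left(\CS^+_{\geq \b0}\cap\CS^+_{\leq \bp_1}\right)\otimes\left(\CS^+_{>\bp_1}\cap\CS^+_{<\bp_2}\right)\otimes\CS^+_{\geq \bp_2},
$$
and the mirror factorization of $\CS^-_{>\bp_1}\cap\CS^-_{<\bp_2}$, and then invoke the slope-orthogonality \eqref{eqn:pair slopes basic}: the pairing $\langle\cdot,S(\cdot)\rangle$ respects slopes, so the three tensor factors on the $+$ side pair only against the matching slope ranges on the $-$ side. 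Because the $-$ argument is confined to the open window $(\bp_1,\bp_2)$, the outer factors $\CS^+_{\geq \b0}\cap\CS^+_{\leq \bp_1}$ and $\CS^+_{\geq \bp_2}$ have no partner and hence lie entirely in the kernel, while the middle factor $\CS^+_{>\bp_1}\cap\CS^+_{<\bp_2}$ pairs against $\CS^-_{>\bp_1}\cap\CS^-_{<\bp_2}$ through the restriction \eqref{eqn:really restricted 2}, which is non-degenerate. This should identify the quotient with $\CS^+_{>\bp_1}\cap\CS^+_{<\bp_2}$ as a bigraded vector space, after which \eqref{eqn:formula} gives exactly the right-hand side of \eqref{eqn:key}, the extra constraint $d\in\BN$ coming from the $\CS_{\geq\b0}^+$ side (i.e. the lower truncation $d\geq 0$).

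The subtle point, and the step I expect to be the main obstacle, is the careful bookkeeping of the antipode twist together with the fact that the pairing in \eqref{eqn:p1 p2} is a pairing between $\CS^+$ and $\CS^-$ \emph{via} $S$ rather than the straightforward slope-by-slope pairing of \eqref{eqn:pair slopes basic}. Concretely, one must check that the antipode $S$ interacts well with the slope filtration — that $S$ sends $\CS^-_{>\bp_1}\cap\CS^-_{<\bp_2}$ into (a completion of) the same slope window, up to lower-order terms that do not affect the rank computation — using formulas \eqref{eqn:antipode e and f} and \eqref{eqn:antipode pairing shuffle}. The genericity hypothesis on $\bp_1,\bp_2$ (Definition \ref{def:generic}) is what guarantees that the relevant slope windows behave like those of $U_q(\fsl_2)$-type pieces, so that the factorizations are compatible and no extra degeneracies hide at the boundary; I would verify that the open conditions $>\bp_1$, $<\bp_2$ preclude any such boundary phenomena. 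Once the rank of the restricted pairing in each bidegree is pinned down to $\dim_\BC(\CS^+_{>\bp_1|\bn,d}\cap\CS^+_{<\bp_2|\bn,d})$ for $d\geq 0$, the dimension count \eqref{eqn:key} is a direct transcription of \eqref{eqn:formula} with the extra intersection with $\BN$.
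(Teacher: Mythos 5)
Your approach breaks at the step where you invoke slope-orthogonality \eqref{eqn:pair slopes basic} to split the pairing across your three-factor decomposition. That formula applies only when \emph{both} arguments are factored along one and the same catty-corner curve, and your chain $-\binfty \leq \b0 \leq \bp_1 < \bp_2$ presumes $\b0 \leq \bp_1$, which is not among the hypotheses: the proposition allows arbitrary generic $\bp_1 < \bp_2$, and in its actual application (the proof of Theorem \ref{thm:main}) one takes $\bp_1 = -\binfty$ and $\bp_2 = \br$ with $\br \in \zz$ arbitrary, possibly with negative or mixed-sign entries, hence incomparable to $\b0$. When $\b0$ and $\bp_1,\bp_2$ are not comparable, no catty-corner curve passes through them (such curves are strictly increasing in every coordinate), so the condition slope $\geq \b0$ cutting out the $+$ side and the window $(\bp_1,\bp_2)$ cutting out the $-$ side cannot be aligned along a common curve, and \eqref{eqn:pair slopes basic} says nothing about their pairing. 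This misalignment is exactly the point of the proposition; the paper flags it in the sentence immediately preceding the statement.

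Moreover, the identification you propose --- that the rank in each bidegree $(\bn,d)$ with $d \geq 0$ equals $\dim_{\BC}\left(\CS^+_{>\bp_1|\bn,d}\cap\CS^+_{<\bp_2|\bn,d}\right)$, so that \eqref{eqn:key} is just \eqref{eqn:formula} with the bidegrees truncated to $d \in \BN$ --- is false, and in fact contradicts \eqref{eqn:key}. Take $\fg = \fsl_2$, $\bp_1 = -5/2$, $\bp_2 = 3/2$ (both generic; recall \eqref{eqn:conj} is known in finite type). By \eqref{eqn:formula}, the graded dimension of $\CS^+_{>\bp_1}\cap\CS^+_{<\bp_2}$ is $\prod_{d=-2}^{1}(1-qv^d)^{-1}$, whose coefficient at $q^3v^0$ is $3$ (generator monomials with exponents $(0,0,3,0)$, $(0,1,1,1)$, $(1,0,0,2)$ on the walls $d=-2,-1,0,1$); but the right-hand side of \eqref{eqn:key} is $(1-q)^{-1}(1-qv)^{-1}$, whose coefficient at $q^3v^0$ is $1$. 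The discrepancy arises because the constraint $d \in \BN$ in \eqref{eqn:key} truncates the set of slope \emph{generators}, not the total vertical degree: a product of window generators of negative vertical degree with ones of positive vertical degree can land in total degree $d \geq 0$, and yet (this is the nontrivial claim) it pairs to zero against all of $\CS^+_{\geq\b0}$. Establishing that vanishing, and that nothing further degenerates, is the real content of the proposition. The paper proves it by moving $(\bp_1,\bp_2)$ along a generic catty-corner curve, observing that both sides of \eqref{eqn:key} change only at wall-crossings $\bp(t_i)\cdot\bn_i = d_i \in \BZ$, bounding the change of the left-hand side at each wall by the change of the right-hand side --- with \emph{no} change at all when $d_i < 0$, which is precisely the case your argument misses --- and then forcing all these inequalities to be equalities by comparison with the non-degenerate limit $(\bp_1,\bp_2) = (-\binfty,\binfty)$, where \eqref{eqn:conj} applies. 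Your outline could be repaired only in the special case $\b0 \leq \bp_1$, where the constraint $d \in \BN$ is vacuous and the windows can be aligned; it does not yield the proposition in the generality needed.
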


\begin{proof} Because $\bp_1< \bp_2$ are generic, we may choose a generic catty-corner curve
$$
\bp: (-\infty,\infty) \rightarrow \rr
$$
such that $\bp(1) = \bp_1$ and $\bp(2) = \bp_2$. Therefore, we will write $\text{LHS}_{t_1,t_2}$ and $\text{RHS}_{t_1,t_2}$ for the left and right-hand sides of \eqref{eqn:key} with $\bp_1$ and $\bp_2$ replaced by $\bp(t_1)$ and $\bp(t_2)$, respectively. We will show that $\text{LHS}_{t_1,t_2} = \text{RHS}_{t_1,t_2}$ in all horizontal degrees $\bn \leq \bm$ for any henceforth fixed $\bm \in \nn$, which would conclude the proof of Proposition \ref{prop:key}.

\medskip 

\noindent Note that the algebras $\CS_{>\bp(t)}^-$ and $\CS^-_{<\bp(t)}$ only change when the catty-corner curve $\bp(t)$ encounters a hyperplane of the form $\bp \cdot \bn \in \BZ$. The genericity of the catty-corner curve implies that any such encounter is with a unique such hyperplane. Since there are discretely many such hyperplanes with $\bn$ less than or equal to our fixed $\bm$, then we conclude that $\text{LHS}_{t_1,t_2}$ only changes at the discretely many values $\bp(t_1) \cdot \bn \in \BZ$ and $\bp(t_2) \cdot \bn \in \BZ$. Moreover, $\text{RHS}_{t_1,t_2}$ also only changes at the same discretely many values of $t_1$ and $t_2$. Therefore, let us consider how $\text{LHS}_{t_1,t_2}$ and $\text{RHS}_{t_1,t_2}$ change when we encounter such a value of $t$. Explicitly, assume that
$$
\bp(t_1) \cdot \bn_1 = d_1 \in \BZ \quad \text{and} \quad \bp(t_2) \cdot \bn_2 = d_2 \in \BZ
$$
(and we take $\bn_1, \bn_2 \neq \b0$ to be minimal with respect to the existence of $d_1,d_2 \in \BZ$ as above). It is then easy to see that for any small enough $\varepsilon > 0$
\begin{equation}
\label{eqn:change rhs 1}
\text{RHS}_{t_1-\varepsilon,t_2} = \text{RHS}_{t_1+\varepsilon,t_2} \prod_{k = 1}^{\infty}\left( \frac 1{1- q^{k\bn_1} v^{kd_1}} \right)^{\delta_{d_1 \geq 0} a_{\fg,k\bn_1}}
\end{equation}
\begin{equation}
\label{eqn:change rhs 2}
\text{RHS}_{t_1, t_2+\varepsilon} = \text{RHS}_{t_1,t_2-\varepsilon} \prod_{k = 1}^{\infty}\left( \frac 1{1- q^{k\bn_2} v^{kd_2}} \right)^{\delta_{d_2 \geq 0} a_{\fg,k\bn_2}}
\end{equation}
In the subsequent paragraph, we will prove the following formulas
\begin{equation}
\label{eqn:change lhs 1}
\text{LHS}_{t_1-\varepsilon,t_2} \leq \text{LHS}_{t_1+\varepsilon,t_2} \left( \sum_{k=0}^{\infty} \dim_{\BC} \left(\CB_{\bp(t_1)|k\bn_1} \right) q^{k\bn_1} v^{k d_1} \right)^{\delta_{d_1 \geq 0}}
\end{equation}
\begin{equation}
\label{eqn:change lhs 2}
\text{LHS}_{t_1,t_2+\varepsilon} \leq \text{LHS}_{t_1,t_2-\varepsilon} \left( \sum_{k=0}^{\infty} \dim_{\BC} \left(\CB_{\bp(t_2)|k\bn_2} \right) q^{k\bn_2} v^{k d_2} \right)^{\delta_{d_2 \geq 0}}
\end{equation}
but let us first show how they allow us to conclude the proof of Proposition \ref{prop:key}. By Proposition \ref{prop:slope dim}, the products in \eqref{eqn:change rhs 1}-\eqref{eqn:change rhs 2} are equal to the respective expressions in parentheses in \eqref{eqn:change lhs 1}-\eqref{eqn:change lhs 2}. The fact that $\text{LHS}_{t,t}$ and $\text{RHS}_{t,t}$ are both equal to 1 allows us to  conclude (by induction on $t_2-t_1$, which is well-defined, since it is enough to consider discretely many values of $t_1$ and $t_2$ in the present argument) that 
\begin{equation}
\label{eqn:inequality}
\text{LHS}_{t_1,t_2} \leq \text{RHS}_{t_1,t_2}
\end{equation}
for all $t_1 < t_2$. However, because the pairing \eqref{eqn:pairing shuffle} is non-degenerate, we have 
$$
\frac {\CS_{\geq \b0}^+}{\text{Ker } \langle\cdot,S(\cdot)\rangle_{\bp(-\infty),\bp(\infty)}} = \CS_{\geq \b0}^+
$$
and therefore $\text{LHS}_{-\infty,\infty} = \text{RHS}_{-\infty,\infty}$ due to \eqref{eqn:conj}. We therefore conclude that the inequality \eqref{eqn:inequality} is an equality for all $t_1 < t_2$. In particular, when $t_1 = 1$ and $t_2 = 2$, this equality implies \eqref{eqn:key}. 

\medskip

\noindent We will only prove \eqref{eqn:change lhs 1}, as \eqref{eqn:change lhs 2} is completely analogous. Consider first the case $d_1 \geq 0$. We note that the LHS of \eqref{eqn:key} is the graded dimension of the vector space of linear maps
$$
\left\{\lambda : \CS_{\geq \b0}^+ \rightarrow \BC \Big| \lambda(-) = \langle -, S(x) \rangle \text{ for some } x \in \CS^-_{>\bp_1} \cap \CS^-_{<\bp_2} \right\}
$$
Therefore, the LHS of \eqref{eqn:change lhs 1} counts the graded dimension of 
$$
\left\{\lambda : \CS_{\geq \b0}^+ \rightarrow \BC \Big| \lambda(-) = \langle -, S(x) \rangle \text{ for some } x \in \CS^-_{>\bp(t_1-\varepsilon)} \cap \CS^-_{<\bp(t_2)} \right\}
$$
while the RHS of \eqref{eqn:change lhs 1} counts the graded dimension of 
\begin{multline*} 
\left\{\mu : \CS_{\geq \b0}^+ \rightarrow \BC \Big| \mu(-) = \langle -, S(y) \rangle \text{ for some } y \in \CS^-_{>\bp(t_1+\varepsilon)} \cap \CS^-_{<\bp(t_2)} \right\} \otimes \\ \underbrace{\otimes \left\{\nu : \CS^+ \rightarrow \BC \Big| \nu(-) = \langle -, S(z) \rangle \text{ for some } z \in \CB^-_{\bp(t_1)} \right\}}_{\text{has the same dimension as }\CB_{\bp(t_1)}^+, \text{ due to the non-degeneracy of \eqref{eqn:really restricted 2}}}
\end{multline*} 
By \eqref{eqn:factorization 3}, multiplication induces $(\CS^-_{>\bp(t_1-\varepsilon)} \cap \CS^-_{<\bp(t_2)}) = \CB_{\bp(t_1)}^- \otimes (\CS^-_{>\bp(t_1+\varepsilon)} \cap \CS^-_{<\bp(t_2)})$, so the inequality \eqref{eqn:change lhs 1} follows from the dimension counts above and the fact that
$$
\langle E, S(zy) \rangle \stackrel{\eqref{eqn:bialgebra 1}}= \langle \ph E_1,S(y)\rangle \langle E_2,S(z)\rangle \stackrel{\eqref{eqn:bialgebra 2},\eqref{eqn:antipode pairing}}= \langle E_1,S(y)\rangle \langle E_2,S(z)\rangle
$$
for all $E \in \CS_{\geq \b0}^+$ with $\Delta(E) = \ph E_1 \otimes E_2$ (where $\ph = \ph^+_{i_1,d_1} \ph^+_{i_2,d_2} \dots$ and $E_1 \in \CS_{\geq \b0}^+$). 

\medskip 

\noindent Now let us prove \eqref{eqn:change lhs 1} when $d_1 < 0$. We will actually show that $\text{LHS}_{t_1-\varepsilon,t_2} = \text{LHS}_{t_1+\varepsilon,t_2}$, or equivalently, we will prove the equality of the graded dimensions of 
\begin{equation}
\label{eqn:vel 1}
\left\{\lambda : \CS_{\geq \b0}^+ \rightarrow \BC \Big| \lambda(-) = \langle -, S(x) \rangle \text{ for some } x \in \CS^-_{>\bp(t_1-\varepsilon)} \cap \CS^-_{<\bp(t_2)} \right\}
\end{equation}
and
\begin{equation}
\label{eqn:vel 2}
\left\{\mu : \CS_{\geq \b0}^+ \rightarrow \BC \Big| \mu(-) = \langle -, S(y) \rangle \text{ for some } y \in \CS^-_{>\bp(t_1+\varepsilon)} \cap \CS^-_{<\bp(t_2)} \right\}
\end{equation}
By \eqref{eqn:factorization 3}, multiplication induces $(\CS^-_{>\bp(t_1-\varepsilon)} \cap \CS^-_{<\bp(t_2)}) = (\CS^-_{>\bp(t_1+\varepsilon)} \cap \CS^-_{<\bp(t_2)}) \otimes \CB_{\bp(t_1)}^-$, and so any $x$ that appears in \eqref{eqn:vel 1} can be written uniquely as a linear combination of $yz$, for various $y$ as in \eqref{eqn:vel 2} and $z \in \CB^-_{\bp(t_1)}$. The equality of the dimensions of the vector spaces \eqref{eqn:vel 1} and \eqref{eqn:vel 2} follows immediately from the fact that
$$
\langle E, S(yz) \rangle \stackrel{\eqref{eqn:bialgebra 1}}= \langle \ph E_1,S(z)\rangle \langle E_2,S(y)\rangle  = \varepsilon(z) \langle E, S(y) \rangle 
$$
The second equality is due to the fact that $E_1 \in \CS^+_{\geq \b0}$ pairs trivially with $z \in \CB_{\bp(t_1)}^-$ unless $\hdeg E_1 = \hdeg z = \b0$ (this is an immediate consequence of $d_1 < 0$; recall that the pairing is non-zero only on elements of opposite degrees). \end{proof}

\medskip

\subsection{Identifying the subalgebras}
\label{sub:identifying}

In the present Subsection, we assume that $\fg$ is of finite type. For any generic $\bp \in \rr$, we showed in \eqref{eqn:simple prop} that $\CB_{\bp}$ is either a trivial algebra (if $\bp \cdot (\nn \backslash \b0) \notin \BZ$) or it has the same graded dimension as $U_q(\fsl_2)$ (if there exists $\bn \in \nn \backslash \b0$ such that $\bp \cdot \bn \in \BZ$, in which case such $\bn$ would range over all multiples of a positive root $\balpha$). In the latter case, we claim that we have a Hopf algebra isomorphism
\begin{equation}
\label{eqn:iso sl2}
U_{q_\balpha}(\fsl_2) \xrightarrow{\sim} \CB_{\bp}
\end{equation}
where $q_\balpha = q^{\frac {(\balpha,\balpha)}2}$, defined on the standard generators of $U_{q_{\balpha}}(\fsl_2)$ as follows
$$
e \mapsto E_{\balpha}, \qquad f \mapsto F_{\balpha}, \qquad \kappa \mapsto \kappa_{\balpha}
$$
with $E_{\balpha}, F_{\balpha}$ being basis vectors of the one-dimensional vector spaces $\CB_{\bp|\balpha}, \CB_{\bp|-\balpha}$, respectively (we normalize these vectors so that the Hopf pairing between them has the same value as the Hopf pairing between the standard elements of quantum $\fsl_2$). The isomorphism \eqref{eqn:iso sl2} follows from the fact that $\CB_{\bp}$ is the Drinfeld double of \footnote{Note that the algebra below slightly differs from that of \eqref{eqn:enlarged slope}, since we only added the Cartan element $\kappa_{\balpha}$ instead of all the Cartan elements $\{\kappa_i\}_{i \in I}$.}
$$
\CB_{\bp}^\geq = \CB_{\bp}^+ \otimes \BC[\kappa_\balpha] \quad \text{and} \quad \CB_{\bp}^\leq = \BC[\kappa^{-1}_\balpha] \otimes \CB_{\bp}^-
$$
(see Proposition \ref{prop:slope subalgebra}) and the following result. 

\medskip

\begin{claim}
\label{claim:3} 
	
Suppose we have algebras $B^\pm = \bigoplus_{k=0}^{\infty} B_{\pm k}$ with $\dim_{\BC}(B_{k})=1$ for all $k \in \BZ$ (we will call $k$ the grading of these algebras), and bialgebra structures on  
$$
B^{\geq} = \frac {B^+ \otimes \BC[\kappa]}{\kappa b = q^{2\deg b} b \kappa}, \qquad B^{\leq} = \frac {\BC[\kappa^{-1}] \otimes B^-}{\kappa^{-1} b = q^{-2\deg b} b \kappa^{-1}}
$$
such that 
\begin{align*}
&\Delta(b_+) = \kappa \otimes b_+ + b_+ \otimes 1 \\
&\Delta(b_-) = 1 \otimes b_- + b_- \otimes \kappa^{-1}
\end{align*} 
for any $b_\pm \in B_{\pm 1}$. Assume that there exists a bialgebra pairing
$$
B^\geq \otimes B^\leq \xrightarrow{\langle \cdot,\cdot \rangle}\BC
$$
that satisfies $\langle \kappa,\kappa^{-1}\rangle = q^2$ and whose restriction to $B^+ \otimes B^-$ is non-degenerate. Then there are bialgebra isomorphisms
\begin{equation}
\label{eqn:bialgebra isomorphisms}
U_q^{\geq}(\fsl_2) \xrightarrow{\sim} B^\geq \quad \text{and} \quad U_q^{\leq}(\fsl_2) \xrightarrow{\sim} B^\leq
\end{equation}
given by $e \mapsto b_+, f \mapsto b_-, \kappa \mapsto \kappa$.

\end{claim}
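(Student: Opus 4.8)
The plan is to reduce the statement to the single assertion that the degree-one elements are \emph{non-nilpotent}, i.e. $b_+^k \neq 0$ and $b_-^k \neq 0$ for all $k \geq 1$, and then to assemble the bialgebra isomorphisms by a dimension count. Since $\dim_\BC B_k = 1$ for every $k$ and the unit spans $B_0$, non-nilpotency of $b_+$ forces $b_+^k$ to span $B_k$ for each $k \geq 0$, so that the algebra map $\BC[e] \to B^+$, $e \mapsto b_+$, is a graded linear isomorphism and $B^+$ is the free polynomial algebra on $b_+$, exactly like $U_q^+(\fsl_2)$. Granting this, I would define $U_q^\geq(\fsl_2) \to B^\geq$ by $e \mapsto b_+$, $\kappa \mapsto \kappa$: it is a well-defined algebra homomorphism because the only relation $\kappa e \kappa^{-1} = q^2 e$ of the source matches $\kappa b_+ \kappa^{-1} = q^2 b_+$ in $B^\geq$; it is surjective since $b_+, \kappa$ generate $B^\geq$; and it is bijective because it sends the basis $\{e^j \kappa^m\}$ to $\{b_+^j \kappa^m\}$, which is a basis of $B^\geq = B^+ \otimes \BC[\kappa]$. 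Finally it is a coalgebra map, since it intertwines $\Delta$ and $\varepsilon$ on the generators $e, \kappa$ (using $\Delta(b_+) = \kappa \otimes b_+ + b_+ \otimes 1$ and $\varepsilon(b_+) = 0$, the latter read off from the counit axiom applied to this coproduct), and these generate. The argument for $B^\leq$ is identical, using $\Delta(b_-) = 1 \otimes b_- + b_- \otimes \kappa^{-1}$.

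It remains to prove non-nilpotency, which is the crux. I would argue by minimal counterexample: suppose $b_+^k = 0$ with $k \geq 2$ minimal (note $b_+ \neq 0$ as it spans $B_1$). Since $\Delta$ is an algebra homomorphism and $\kappa b_+ = q^2 b_+ \kappa$, the two summands of $\Delta(b_+)$ satisfy $(\kappa \otimes b_+)(b_+ \otimes 1) = q^2 (b_+ \otimes 1)(\kappa \otimes b_+)$, so the $q$-binomial theorem gives
\begin{equation*}
\Delta(b_+^k) = \Delta(b_+)^k = \sum_{i=0}^{k} \binom{k}{i}_{q^2} b_+^i \kappa^{k-i} \otimes b_+^{k-i}.
\end{equation*}
The summand $i = 1$ equals $\binom{k}{1}_{q^2}\, b_+ \kappa^{k-1} \otimes b_+^{k-1}$; here $\binom{k}{1}_{q^2} = [k]_{q^2} \neq 0$ because $q$ is not a root of unity, and $b_+^{k-1} \neq 0$ by minimality, so this summand is nonzero and sits in a $B^+$-bidegree shared by no other summand. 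Hence $\Delta(b_+^k) \neq 0$, contradicting $b_+^k = 0$. Thus $b_+^k \neq 0$ for all $k$, and symmetrically $b_-^k \neq 0$.

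The non-degenerate pairing enters to guarantee that these isomorphisms can be normalized to be \emph{compatible with the pairings}, so that the induced map of Drinfeld doubles is the standard one of \eqref{eqn:iso sl2}. Concretely, I would first record the two weight-vanishings $\langle b_+, \kappa^{-1} \rangle = 0 = \langle \kappa, b_- \rangle$: since $\kappa, \kappa^{-1}$ are grouplike, $\langle \kappa, - \rangle$ and $\langle -, \kappa^{-1} \rangle$ are algebra characters, and applying them to the relations $\kappa^{-1} b_- = q^2 b_- \kappa^{-1}$ and $\kappa b_+ = q^2 b_+ \kappa$, together with $\langle \kappa, \kappa^{-1} \rangle = q^2$ and $q^2 \neq 1$, forces both to vanish. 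The same grouplike/character argument yields the grading-compatibility of $\langle \cdot, \cdot \rangle$, whence non-degeneracy on $B^+ \otimes B^-$ gives $\langle b_+, b_- \rangle \neq 0$. With the vanishings in hand, the iterated coproduct axiom collapses the evaluation of $\langle b_+^k, b_-^k \rangle$ to a sum over permutations and yields $\langle b_+^k, b_-^k \rangle = [k]_{q^2}!\, \langle b_+, b_- \rangle^k \neq 0$; this both reproves non-nilpotency and pins the normalization to that of quantum $\fsl_2$. The hard part, in either route, is exactly controlling non-nilpotency; the coproduct route handles it cleanly, while the pairing route additionally demands the routine-but-fiddly grading-compatibility supplied by the character computation above.
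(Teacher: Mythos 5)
Your proof is correct, but the way you handle the crux is genuinely different from the paper's. The paper reduces to non-nilpotency of $b_\pm$ exactly as you do, and then proves $b_\pm^k \neq 0$ through the pairing: it evaluates $\langle b_+^k, b_-^k\rangle$ via the bialgebra axioms \eqref{eqn:bialgebra 1}-\eqref{eqn:bialgebra 2}, notes that the answer is $\langle b_+,b_-\rangle^k$ times the same combination of powers of $q^2 = \langle\kappa,\kappa^{-1}\rangle$ that computes $\langle e^k,f^k\rangle$ in $U_q(\fsl_2)$, and concludes non-vanishing from the non-degeneracy of the Hopf pairing of quantum $\fsl_2$ (together with $\langle b_+,b_-\rangle \neq 0$, coming from the assumed non-degeneracy on $B^+ \otimes B^-$). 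Your primary route instead stays entirely inside $B^\geq$: minimal counterexample plus the $q$-binomial expansion of $\Delta(b_+)^k$, with the bidegree-$(1,k-1)$ component $[k]_{q^2}\, b_+\kappa^{k-1}\otimes b_+^{k-1}$ surviving for degree reasons. This argument is valid — the bidegree decomposition of $B^\geq\otimes B^\geq$ does separate the summands, and $[k]_{q^2}\neq 0$ since $q$ is not a root of unity — and it is in one respect stronger: it never invokes the pairing, so it shows that the bialgebra isomorphisms \eqref{eqn:bialgebra isomorphisms}, the literal conclusion of Claim \ref{claim:3}, follow from the coproduct hypotheses alone. What the paper's pairing computation buys, and what your third paragraph correctly supplies as a supplement (your character argument for $\langle \kappa,b_-\rangle = \langle b_+,\kappa^{-1}\rangle = 0$ and the resulting $\langle b_+^k,b_-^k\rangle = [k]_{q^2}!\,\langle b_+,b_-\rangle^k$ is a more detailed version of what the paper asserts), is that the isomorphisms are compatible with the pairings; this is what the application actually needs, since \eqref{eqn:iso sl2} is an identification of Drinfeld doubles, which are constructed from the pairing. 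So your coproduct route gives a cleaner and more self-contained proof of the stated claim, while the pairing route — the paper's — simultaneously delivers the normalization statement used downstream.
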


\noindent  Indeed, the fact that we have 
\begin{align*}
&\Delta_{\bp}(E_{\balpha}) = \kappa_\balpha \otimes E_{\balpha} + E_{\balpha} \otimes 1 \\
&\Delta_{\bp}(F_{\balpha}) = 1 \otimes F_{\balpha} + F_{\balpha} \otimes \kappa_{\balpha}^{-1}
\end{align*} 
holds because of \eqref{eqn:coproduct slope plus}-\eqref{eqn:coproduct slope minus} and the fact that $\bp \cdot \bn \notin \BZ$ for any $\bn < \balpha$. Thus, Claim \ref{claim:3} applies to the case at hand, establishing \eqref{eqn:iso sl2}.

\medskip 

\noindent To prove Claim \ref{claim:3}, it suffices to show that $b_\pm^k \neq 0$ for any $k > 0$, because then the coassociativity of the coproduct implies that $\Delta(b_\pm^k)$ matches $\Delta(e^k)$ under the assignment $e\mapsto b_\pm $. Then $e^k \mapsto b_+^k, f^k \mapsto b_-^k$ define bialgebra isomorphisms \eqref{eqn:bialgebra isomorphisms} and we would be done. However, the fact that $b_\pm^k \neq 0$ follows from the fact that
$$
\langle b_+^k, b_-^k \rangle
$$
is calculated using \eqref{eqn:bialgebra 1}-\eqref{eqn:bialgebra 2}, and expressed in terms of $\langle b_+,b_-\rangle^k$ and appropriate powers of $q^2 = \langle \kappa,\kappa^{-1}\rangle$. Therefore, the pairing above matches $\langle e^k, f^k \rangle$ in $U_q(\fsl_2)$, and is thus non-zero due to the non-degeneracy of the Hopf pairing of quantum $\fsl_2$.

\bigskip

\section{$q$-characters}

\medskip 

\subsection{Category $\CO$}
\label{sub:representations}

For any Kac-Moody Lie algebra $\fg$, we consider representations
\begin{equation}
\label{eqn:representation}
\CA^\geq \curvearrowright V
\end{equation}
of the half subalgebra \eqref{eqn:slope zero plus}. In \cite{N Cat}, we generalized the Borel category $\CO$ of \cite{HJ} from finite type $\fg$ to any Kac-Moody $\fg$, as follows. For any $\ell$-weight
\begin{equation}
\label{eqn:ell weight}
\bpsi = (\psi_i(z))_{i \in I} \in \left(\BC[[z^{-1}]]^* \right)^I
\end{equation}
we construct a simple module given by
\begin{equation}
\label{eqn:simple module}
L(\bpsi) \cong \CS_{<\b0}^- \Big/ J(\bpsi)
\end{equation}
Above, $J(\bpsi) = \oplus_{\bn \in \nn} J(\bpsi)_{\bn}$, where $J(
\bpsi)_{\bn}$ consists of those $F\in \CS_{<\b0|-\bn}$ such that
\begin{equation}
\label{eqn:j pairing}
\left \langle E(z_{i1},\dots,z_{in_i}) \prod_{i \in I} \prod_{a=1}^{n_i} \psi_i(z_{ia}), S\left(F(z_{i1},\dots,z_{in_i})\right) \right \rangle = 0
\end{equation}
for all $E \in \CS_{\geq \b0|\bn}$. The $\CA^\geq$-module structure on $L(\bpsi)$ is described in \cite[Proposition 4.5]{N Cat}. For the purposes of the present paper, the only things that we need to know about simple modules are their grading
\begin{equation}
\label{eqn:grading simple}
\CS_{<\b0}^- \Big / J(\bpsi) =  \bigoplus_{\bn \in \nn} \CS_{<\b0|-\bn} \Big / J(\bpsi)_{\bn}
\end{equation}
and the way that Cartan elements act on $L(\bpsi)$. In the language of \eqref{eqn:simple module}, we have
\begin{equation}
\label{eqn:cartan series}
\frac {\ph^+_j(y)}{\psi_j(y)} \quad \text{sends} \quad F(z_{i1},\dots,z_{in_i})_{i \in I} \quad \text{to}
\end{equation}
$$
F(z_{i1},\dots,z_{in_i})_{i \in I} \cdot q^{(-\bn,\bs^j)} \exp \left( \sum_{u=1}^{\infty} \sum_{i \in I} (z_{i1}^u + \dots + z_{in_i}^u ) \frac {q^{-ud_{ij}} - q^{ud_{ij}}}{uy^u} \right)
$$
which is an immediate consequence of \eqref{eqn:shuffle minus commute} and the fact that $\ph_j^+(y)$ acts on the cyclic generator $(1 \text{ mod } J(\bpsi)) \in L(\bpsi)$ by multiplication with $\psi_j(y)$. A representation \eqref{eqn:representation} is said to be in \textbf{category $\CO$} (\cite{HJ}) if its weight spaces
$$
V_{\bom} = \Big\{ v \in V \Big| \kappa_i \cdot v = q^{(\bom,\bs^i)}v, \ \forall i \in I \Big \}
$$
are all finite-dimensional, and non-zero only for $\bom$ lying in a finite union of translates of $- \nn$. From \eqref{eqn:grading simple}, it is clear that $L(\bpsi)$ lies in category $\CO$ if and only if \footnote{Technically speaking, this requires the symmetric bilinear form \eqref{eqn:symmetric pairing} to be non-degenerate, which is the case for finite type $\fg$, but not necessarily for Kac-Moody $\fg$. The usual workaround in the latter case is to enlarge the Cartan subalgebra so as to make the form \eqref{eqn:symmetric pairing} non-degenerate.} $\CS_{<\b0|-\bn}/J(\bpsi)_{\bn}$ is a finite-dimensional vector space for all $\bn \in \nn$. In turn, the finite-dimensionality of the aforementioned vector spaces was shown in \cite{HJ, N Cat} to be equivalent to every $\psi_i(z)$ being the power series expansion of a rational function, in which case we call the whole of $\bpsi$ \textbf{rational}. If we regard $\CS_{<\b0|-\bn}/J(\bpsi)_{\bn}$ as a module over the ring of color-symmetric polynomials in $\bn$ variables, its finite-dimensionality implies that it is supported at finitely many closed points
$$
\bx = (x_{ia})_{i \in I, 1\leq a \leq n_i} \in \prod_{i\in I} \BC^{n_i}/S_{n_i} = \BC^{\bn}
$$
We may therefore refine \eqref{eqn:grading simple} as follows
\begin{equation}
	\label{eqn:grading simple upgrade}
	L(\bpsi) \cong \bigoplus_{\bn \in \nn} \bigoplus_{\bx \in \BC^{\bn}} \left( \CS_{<\b0|-\bn} \Big / J(\bpsi)_{\bn} \right)_{\bx}
\end{equation}
where the vector space in parentheses in the RHS consists of elements on which multiplication by $z_{i1}^u+\dots+z_{in_i}^u - x_{i1}^u - \dots - x_{in_i}^u$ is nilpotent, for all $i \in I, u \geq 1$. 

\medskip

\subsection{The contribution of $\neq 0$}
\label{sub:non-zero}

As any $\bx \in \BC^{\bn}$ can be written uniquely as the concatenation of some $\by \in (\BC^*)^{\bm}$ with the origin $\b0_{\bn-\bm} \in \BC^{\bn-\bm}$ for some $\bm \leq \bn$, we will now show how to ``factor" the right-hand side of \eqref{eqn:grading simple upgrade} in these terms. We first start with the contribution of points $\by \in (\BC^*)^{\bm}$ with non-zero entries.

\medskip 

\begin{definition}
\label{def:non-zero}

For any $\ell$-weight $\bpsi$, define
$$
J^{\neq 0}(\bpsi) = \bigoplus_{\bn \in \nn} J^{\neq 0}(\bpsi)_{\bn}
$$
where $J^{\neq 0}(\bpsi)_{\bn}$ consists of those $F \in \CS_{<\b0|-\bn}$ such that 
\begin{equation}
	\label{eqn:j pairing new}
	\left \langle E(z_{i1},\dots,z_{in_i}) \prod_{i \in I} \prod_{a=1}^{n_i} \psi_i(z_{ia}), S\left(F(z_{i1},\dots,z_{in_i})\right) \right \rangle = 0
\end{equation}
for all $E \in \CS_{\geq  (N,\dots,N)|\bn}$ for some $N$ large enough (depending on $F$).

\end{definition}

\medskip 

\noindent We will abbreviate $N\bone = (N,\dots,N)$. While 
\begin{equation}
	\label{eqn:simple module new}
L^{\neq 0}(\bpsi) = \CS_{<\b0}^- \Big/ J^{\neq 0}(\bpsi)
\end{equation}
is not an $\CA^{\geq}$ module, it is a module for $\BC[\ph_{i,d}^+]_{i \in I, d\geq 0}$ via the action \eqref{eqn:cartan series}. The reason for this is that we have $\langle E\psi, S(FP)\rangle = \langle EP\psi,S(F)\rangle$ for any color-symmetric polynomial $P$ in $\bn$ variables, and $\CS_{\geq N \bone|\bn}$ is closed under multiplication with $P$.

\medskip

\begin{proposition}
\label{prop:residues}

We have $(\CS_{<\b0|-\bn} / J^{\neq 0}(\bpsi)_{\bn})_{\bx} \neq 0$ only if for all $(i,a)$ 

\medskip 

\begin{itemize}
	
	\item $x_{ia}$ is a non-zero pole of $\psi_i(z)$, or
	
	\medskip 
	
	\item $x_{ia} = x_{jb} q^{-d_{ij}}$ for some $(j,b) < (i,a)$
	
\end{itemize}

\medskip 

\noindent with respect to some total order on the set $\{(i,a) | i \in I, 1 \leq a \leq n_i\}$.	

\end{proposition}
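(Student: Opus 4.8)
The plan is to rewrite the defining condition \eqref{eqn:j pairing new} of $J^{\neq 0}(\bpsi)_{\bn}$ as a contour integral and read the admissible supports $\bx$ off the poles of the integrand. Fix an ordering $i_1,\dots,i_n$ of $\bn$ and write $E = e_{i_1,d_1} * \dots * e_{i_n,d_n}$; then \eqref{eqn:antipode pairing shuffle} gives
\[
\Big\langle E \cdot {\textstyle\prod_{i,a}} \psi_i(z_{ia}),\, S(F) \Big\rangle \;=\; (-1)^n \int_{|z_1| \ll \dots \ll |z_n|} \frac{z_1^{d_1} \cdots z_n^{d_n}\, \psi_{i_1}(z_1) \cdots \psi_{i_n}(z_n)\, F(z_1,\dots,z_n)}{\prod_{a<b} \zeta_{i_b i_a}(z_b/z_a)}.
\]
The decisive feature is that $E$ enters this formula only through the monomial $z_1^{d_1} \cdots z_n^{d_n}$, hence contributes no poles; since products of the $e_{i,d}$ span $\CS_{\geq\b0}^+$, it suffices to analyze the poles of the remaining integrand in the variables $z_a$.

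First I would classify those poles. As a function of each $z_a$, the integrand can be singular only at (i) the poles of $\psi_{i_a}$, (ii) the points $z_b = z_a q^{-d_{i_b i_a}}$ arising from the factors $1/\zeta_{i_b i_a}(z_b/z_a) = (z_b - z_a)/(z_b - z_a q^{-d_{i_b i_a}})$ of \eqref{eqn:zeta}, and (iii) $z_a \in \{0,\infty\}$. The only poles of $F$ are along $z_a = z_b$ for distinct colors $i_a \neq i_b$ (by the denominator in \eqref{eqn:big shuffle}), and these are exactly cancelled by the simple zeros of the corresponding $1/\zeta_{i_b i_a}(z_b/z_a)$ at $z_b = z_a$, so they produce no residue. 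The behavior at $z_a = \infty$ is suppressed by the hypothesis that $F$ have slope $< \b0$, whereas the behavior at $z_a = 0$ is precisely what is discarded by pairing only against $E$ of arbitrarily high slope $\geq N\bone$ in Definition \ref{def:non-zero} — this is the origin of the qualifier ``non-zero''.

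Next I would evaluate the integral by iterated residues, integrating the outermost variable $z_n$ first and moving inward. Because $|z_1| \ll \dots \ll |z_n|$, every pole of type (ii) encountered at the stage of $z_a$ lies inside the $z_a$-contour and references a variable $z_c$ with smaller index $c < a$ (not yet integrated) or a non-zero pole of $\psi_{i_a}$. Thus, taking the total order to be the chosen ordering index, each residue term specializes every $z_a$ either to a non-zero pole of $\psi_{i_a}$, or to $z_c q^{-d_{i_a i_c}}$ with $(i_c,c)$ earlier — exactly the dichotomy in the statement. To pass from this residue decomposition to the support in \eqref{eqn:grading simple upgrade}, I would use that multiplication of $F$ by a color-symmetric power sum $\sum_{a:\,i_a=i} z_a^u$ corresponds, under the pairing, to inserting the same symbol into the integrand (compare \eqref{eqn:cartan series}); each residue term is therefore a generalized eigenvector for these operators with eigenvalue $\sum_a x_{ia}^u$, i.e. is supported exactly at the configuration of its pole locations. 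Hence $\big(\CS_{<\b0|-\bn}/J^{\neq 0}(\bpsi)_{\bn}\big)_{\bx}$ can be non-zero only for $\bx$ of the asserted form.

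I expect the main obstacle to lie in the residue bookkeeping of the previous paragraph. After a variable $z_b$ has been specialized to, say, $z_c q^{-d'}$, the factor $1/\zeta_{i_b i_a}(z_b/z_a)$ acquires a \emph{secondary} pole at $z_a = z_c q^{-d' + d_{i_b i_a}}$, which is again inside the $z_a$-contour but whose shift is in general not of the direct form $q^{-d_{i_a i_c}}$; taken at face value such a residue would produce a support configuration violating the statement. The heart of the proof is therefore to show that all residues built from such chained/secondary poles vanish, and this is exactly where the wheel conditions \eqref{eqn:wheel} satisfied by $F$ must be used: the offending specializations force the arguments of $F$ onto a $q$-string on which its numerator vanishes. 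Carefully organizing this vanishing (presumably by induction on $\bn$, peeling off one residue at a time and checking that the surviving configurations are precisely the $q$-strings emanating directly from the poles of $\bpsi$) is the step I anticipate will require the most care.
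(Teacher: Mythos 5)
Your overall strategy --- rewrite \eqref{eqn:j pairing new} via \eqref{eqn:antipode pairing shuffle} as a contour integral for $E = e_{i_1,d_1} * \dots * e_{i_n,d_n}$, use the arbitrarily large exponents $d_a$ to suppress the residues at $z=0$, and read the support off the remaining poles --- is exactly the paper's. But your residue bookkeeping runs in the opposite order from the paper's, and this is where the genuine gap sits. The paper (quoting \cite{N Cat} for \eqref{eqn:residue condition}) takes iterated residues starting with the \emph{innermost} variable $z_1$: on the contours $1 \ll |z_1| \ll \dots \ll |z_n|$, the only poles of the integrand in $z_1$ inside its contour are the non-zero poles of $\psi_{i_1}$ and $z_1 = 0$, because the $\zeta$-denominators have their $z_1$-poles at $z_1 = z_b q^{d_{i_b i_1}}$ with $b > 1$, which lie \emph{outside} the $z_1$-contour. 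Inductively, each $z_b$ then gets pinned either at a non-zero pole of $\psi_{i_b}$ or at $x_a q^{-d_{i_a i_b}}$ with $a < b$ \emph{already specialized to a concrete point}; this is precisely the dichotomy of Proposition \ref{prop:residues}, and no ``secondary/chained'' poles ever arise. By integrating the outermost variable $z_n$ first, you specialize variables to shifts of not-yet-integrated variables, which is what manufactures the chained-pole problem that you correctly flag as the heart of your argument.

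That problem cannot be resolved the way you propose. The chained residues do not vanish individually --- in particular, not because of the wheel conditions \eqref{eqn:wheel} --- rather, they cancel in pairs for reasons independent of $F$. Take $\fg = \fsl_3$, $\bn = \bs^1 + \bs^2$, $\psi_1 \equiv 1$, $\psi_2(z) = (z-p)^{-1}$, $F = \rho(z_1,z_2)/(z_1-z_2)$: the integrand (measure included) is $\frac{G(z_1,z_2)}{(z_2 - z_1 q)(z_2 - p)}\,dz_1\,dz_2$ with $G$ regular and generically non-vanishing. Integrating $z_2$ first produces two terms, from $z_2 = z_1 q$ and $z_2 = p$; both acquire a pole at the chained point $z_1 = p/q$ (a configuration with $x_1 = x_2 q^{d_{12}}$, which indeed violates the Proposition), and the two residues there are $+G(p/q,p)/q$ and $-G(p/q,p)/q$, cancelling identically. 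Here $F(p/q,p)$ is generically non-zero and no wheel condition even applies (\eqref{eqn:wheel} requires two variables of one color), so the claim that the offending configurations are killed by vanishing of $F$ on $q$-strings is false; moreover, the description \eqref{eqn:e} via wheel conditions is only available in finite type, whereas the Proposition is stated and used for arbitrary Kac-Moody $\fg$. A smaller gap: reducing to monomial products with \emph{all} $d_a \geq M$ is not the statement that products of the $e_{i,d}$ span $\CS^+_{\geq \b0}$; it requires the containment $\CS_{\geq N\bone|\bn} \subseteq \oCS_{\geq M\bone|\bn}$ of \eqref{eqn:contain 2}, i.e. \cite[Proposition 3.26]{N Cat}. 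If you simply reverse the order of the iterated residues, your eigenvalue argument for the support goes through and the proof closes along the paper's lines; otherwise you must prove the pairwise cancellation lemma, not a vanishing statement for $F$.
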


\medskip

\begin{proof} For any $M \in \BN$, let 
\begin{equation}
\label{eqn:contain 1}
\oCS^+_{\geq M \bone} \subseteq \CS^+_{\geq M \bone}
\end{equation}
denote the subalgebra generated by $\{e_{i,d}\}_{i \in I, d\geq M}$. As follows from \cite[Proposition 3.26]{N Cat}, for any $\bn \in \nn$ and all large enough $N \gg M$, we have
\begin{equation}
\label{eqn:contain 2}
\CS_{\geq N \bone|\bn} \subseteq \oCS_{\geq M \bone |\bn}
\end{equation}
Due to the two displays above, an element $F \in \CS_{<\b0|-\bn}$ lies in $J^{\neq 0}(\bpsi)_{\bn}$ if and only if for all orderings $i_1,\dots,i_n$ of $\bn$ and all $d_1,\dots,d_n \geq M$ with $M$ large enough, we have
\begin{multline}
\label{eqn:multline}
0 = \left \langle e_{i_1,d_1} * \dots * e_{i_n,d_n} \prod_{i \in I} \prod_{a=1}^{n_i} \psi_i(z_{ia}), S(F) \right \rangle = \\ = (-1)^n \int_{1 \ll |z_1| \ll \dots \ll |z_n|} \frac {z_1^{d_1} \dots z_n^{d_n} F (z_1,\dots,z_n)}{\prod_{1\leq a < b \leq n} \zeta_{i_bi_a} \left(\frac {z_b}{z_a} \right)} \prod_{a=1}^n \psi_{i_a}(z_a) 
\end{multline}
The reason for having the variables run over circles of radius $\gg 1$ is so that any finite poles of $\psi$ should be contained inside these circles; this corresponds to the fact that the pairing above is computed by expanding each $\psi_i(z)$ as a power series in $z^{-1}$. If $M$ is much larger than the order of the poles of all $\psi_i(z)$, then the condition above was shown in \cite[proof of Theorem 1.3]{N Cat} to be equivalent to
\begin{equation}
\label{eqn:residue condition}
\underset{z_n=x_n}{\text{Res}} \dots \underset{z_1=x_1}{\text{Res}} \frac {z_1^{d_1} \dots z_n^{d_n} F(z_1,\dots,z_n)}{\prod_{1\leq a < b \leq n} \zeta_{i_bi_a} \left(\frac {z_b}{z_a} \right)} \prod_{a=1}^n \frac {\psi_{i_a}(z_a)}{z_a} = 0, \qquad \forall d_1,\dots,d_n \in \BZ
\end{equation}
where $(x_1,\dots,x_n)$ denotes an ordering of $\bx$, i.e. a one-to-one correspondence $x_a \leftrightarrow x_{i_a\bullet_a}$ which matches the one-to-one correspondence $z_a \leftrightarrow z_{i_a\bullet_a}$ that is implicit in all formulas above, see \eqref{eqn:notation ordering}. Equation \eqref{eqn:residue condition} imposes finitely many linear conditions on the value and derivatives of $F$ at $(z_1,\dots,z_n) = (x_1,\dots,x_n)$ (derivatives may arise due to the fact that the poles giving rise to the residues \eqref{eqn:residue condition} may have order $> 1$). In order for these conditions to be non-vacuous, the rational function in \eqref{eqn:residue condition} must have a pole at $(z_1,\dots,z_n) = (x_1,\dots,x_n)$, and a necessary condition for this is for every $z_b$ to either be a non-zero pole of $\psi_{i_b}(z)$, or equal to $z_a q^{-d_{i_ai_b}}$ for some $a<b$. \end{proof}

\medskip 

\begin{remark} Because of \eqref{eqn:contain 1}-\eqref{eqn:contain 2}, the following map is an isomorphism
$$
\CS_{<\b0}^- \Big / J^{\neq 0}(\bpsi) \xrightarrow{\sim} \CS^- \Big/ \bar{J}^{\neq 0}(\bpsi)
$$
where $\bar{J}^{\neq 0}(\bpsi)$ denotes the set of $F \in \CS^-$ such that \eqref{eqn:j pairing new} holds, or equivalently \eqref{eqn:residue condition} holds. Indeed, injectivity is obvious, while for surjectivity we argue as follows: 
$$
\forall R \in \CS_{-\bn}, \ \exists N \gg 0 \text{ s.t. } R \prod_{i \in I} (z_{i1} \dots z_{in_i})^N \in \CS_{<\b0|-\bn}
$$
Then we note that
$$
R \prod \left( \prod_{i \in I} (z_{i1} \dots z_{in_i})^N  - \prod_{i \in I} (x_{i1} \dots x_{in_i})^N \right)^M \in \bar{J}^{\neq 0}(\bpsi)
$$
where the first product runs over the finitely many non-trivial residues which appear in \eqref{eqn:residue condition} (and $M$ is chosen large enough to kill all the poles that may produce these residues). The formula above implies that a non-zero constant times $R$ is equivalent to a linear combination of elements of $\CS^-_{<\b0}$ modulo $\bar{J}^{\neq 0}(\bpsi)$.

\end{remark}

\medskip

\subsection{The contribution of 0}
\label{sub:zero}

Having studied the case of $\bx \in (\BC^*)^{\bn}$ in the previous Subsection, let us now consider the case of $\bx = \b0_{\bn}$. 

\medskip

\begin{definition}
\label{def:zero}
	
For any $\br \in \zz$, define
\begin{equation}
\label{eqn:zero}
L^\br = \CS_{<\b0}^- \Big/ J^\br 
\end{equation}
where
$J^\br = \oplus_{\bn \in \nn}J^\br_{\bn}$ consists of those $F\in \CS_{<\b0|-\bn}$ such that
\begin{equation}
\label{eqn:j pairing polynomial}
\left \langle E(z_{i1},\dots,z_{in_i}) \prod_{i \in I} \prod_{a=1}^{n_i} z_{ia}^{-r_i}, S \left( F(z_{i1},\dots,z_{in_i}) \right) \right \rangle = 0
\end{equation}
for all $E \in \CS_{\geq \b0|\bn}$.
	
\end{definition}

\medskip 

\noindent As opposed from $L(\bpsi)$, the vector space $L^{\br}$ carries an extra grading
\begin{equation}
	\label{eqn:grading simple polynomial}
\CS_{<\b0}^- \Big / J^{\br} = \bigoplus_{\bn \in \nn} \bigoplus_{d \in \BN} \CS_{<\b0|-\bn,d} \Big / J^{\br}_{\bn,d}
\end{equation}
which is induced by the vertical degree on $\CS_{<\b0}^-$ (with respect to which $J^\br$ is homogeneous). Moreover, $L^\br$ is a vertically graded $\BC[\ph_{i,d}^+]_{i \in I, d\geq 0}$-module via \eqref{eqn:cartan series}.

\medskip

\noindent For any rational $\ell$-weight $\bpsi$, let us write
$$
\ord \bpsi \in \zz
$$
for the $I$-tuple of orders of the poles of the rational functions $\psi_i(z)$ at $z=0$ (by assumption, the orders of these rational functions at $z = \infty$ are all 0). 

\medskip

\begin{proposition}
\label{prop:decomposition simple}

Let us write the coproduct \eqref{eqn:coproduct shuffle minus} as
\begin{equation}
\label{eqn:f'}
\Delta(F) = \sum_{\b0 \leq \bm \leq \bn} F'(z_{i1},\dots,z_{im_i})_{i \in I} \otimes F''(z_{i,m_i+1},\dots,z_{in_i})_{i \in I} \prod_{i \in I} \prod_{a=1}^{m_i} \ph_i^-(z_{ia})
\end{equation} 
For any $\ell$-weight $\bpsi$, the assignment $F \mapsto F' \otimes F''$ induces an isomorphism 
\begin{equation}
\label{eqn:decomposition simple}
L(\bpsi) \stackrel{\sim}\longrightarrow L^{\eord \bpsi} \otimes L^{\neq 0}(\bpsi), \qquad 
\end{equation}
of $\BC[\ph_{i,d}^+]_{i \in I, d\geq 0}$-modules, where $\ph_i^+(z)$ acts on the RHS by $\ph_i^+(z) \otimes \ph_i^+(z)$, $\forall i \in I$.

\end{proposition}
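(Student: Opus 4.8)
The plan is to deduce the isomorphism \eqref{eqn:decomposition simple} from the refined support decomposition \eqref{eqn:grading simple upgrade}, by checking that the coproduct map $F \mapsto F' \otimes F''$ is compatible with it. First I would fix $\bn \in \nn$ and recall that every $\bx \in \BC^{\bn}$ factors uniquely as the concatenation of the origin $\b0_{\bn - \bm} \in \BC^{\bn - \bm}$ with a point $\by \in (\BC^*)^{\bm}$, for a unique $\bm \leq \bn$. Since $L^{\ord \bpsi}$ is supported only at $\bx = \b0$ — its defining pairing \eqref{eqn:j pairing polynomial} uses the monomial $\prod z_{ia}^{-r_i}$, whose only pole sits at the origin, so the residue analysis of Proposition \ref{prop:residues} forces its support to $\b0$ — while $L^{\neq 0}(\bpsi)$ is supported off the origin by Proposition \ref{prop:residues}, the target of \eqref{eqn:decomposition simple} decomposes as $\bigoplus_{\b0 \leq \bm \leq \bn} (L^{\ord \bpsi}_{\bn - \bm})_{\b0} \otimes (L^{\neq 0}(\bpsi)_{\bm})_{\by}$. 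It therefore suffices to show that $F \mapsto F' \otimes F''$ induces, for each such splitting, an isomorphism between the $\bx = \b0_{\bn-\bm} \sqcup \by$ block of $L(\bpsi)$ and the corresponding tensor summand.

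The engine of the argument is that the iterated-residue characterization \eqref{eqn:residue condition} factorizes at such a point $\bx$. Writing each $\psi_i(z) = z^{-r_i} \phi_i(z)$ with $\br = \ord \bpsi$ and $\phi_i$ regular and non-vanishing at the origin, I would order the variables so that the $\bn - \bm$ coordinates sent to the origin are innermost (all small, $|z_1| \ll \dots \ll |z_{|\bn - \bm|}|$) and the $\bm$ coordinates approaching $\by$ are outermost. The two blocks of residues then decouple, because the cross factors $\zeta_{i_b i_a}(z_b/z_a)$ pairing an inner (small) variable with an outer (large) variable tend to $1$ as the ratio $z_b/z_a \to \infty$, by the explicit shape \eqref{eqn:zeta} of $\zeta$. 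Moreover, since we range over all test monomials $z_1^{d_1} \dots z_n^{d_n}$ with $d_a \in \BZ$, the regular non-vanishing factors may be absorbed block by block: near the origin $\phi_i$ is a unit, so the $\b0$-block condition reduces to the one for the monomial $z^{-r_i}$ cutting out $J^{\ord \bpsi}$; near $\by$ the factor $z^{-r_i}$ is a unit, so the $\by$-block condition reduces to the non-zero poles of $\psi_i$ cutting out $J^{\neq 0}(\bpsi)$. The coproduct \eqref{eqn:coproduct shuffle minus}, with its split of variables into small and large together with its matching $\zeta$-denominators, is precisely the algebraic shadow of this contour splitting, and the passage between the single pairing and the product of the two block pairings is governed by the bialgebra axioms \eqref{eqn:bialgebra 1}--\eqref{eqn:bialgebra 2} and \eqref{eqn:antipode pairing}.

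Granting this factorization, I would identify, in each support block, $J(\bpsi)_{\bn}$ with the preimage under $F \mapsto F' \otimes F''$ of $J^{\ord \bpsi} \otimes \CS^-_{<\b0} + \CS^-_{<\b0} \otimes J^{\neq 0}(\bpsi)$, which makes \eqref{eqn:decomposition simple} well defined and injective. Bijectivity then follows from a graded-dimension count: the block factorization shows that $\dim (L(\bpsi)_{\bn})_{\bx}$ equals the product of $\dim (L^{\ord \bpsi}_{\bn-\bm})_{\b0}$ and $\dim (L^{\neq 0}(\bpsi)_{\bm})_{\by}$, and summing over all $\bm$ and $\by$ reconstructs the full tensor product. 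Counitality of the coproduct provides a sanity check on the leading behavior: the extreme term $\bm = \bn$ of \eqref{eqn:f'} places all of $F$ in the first tensor factor, so the map recovers $F$ modulo $J^{\ord \bpsi}$ against the generator of $L^{\neq 0}(\bpsi)$, confirming that no information is lost.

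Finally, the assertion that \eqref{eqn:decomposition simple} is a map of $\BC[\ph_{i,d}^+]$-modules with $\ph_i^+(z)$ acting diagonally follows from the group-likeness \eqref{eqn:coproduct ph} of $\ph_i^+(z)$ and the multiplicativity of the Cartan action \eqref{eqn:cartan series}: the exponent of the exponential prefactor in \eqref{eqn:cartan series} is a sum over all variables of $F$, which splits cleanly across the variable-groups of $F'$ and $F''$, while the scalar weight distributes according to the factorization $\psi_i(z) = z^{-r_i} \phi_i(z)$, the $z^{-r_i}$ part going with $L^{\ord \bpsi}$ and the $\phi_i$ part with $L^{\neq 0}(\bpsi)$. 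I expect the main obstacle to be making the contour-splitting of the second paragraph fully rigorous and matching it term by term to \eqref{eqn:f'}: one must control the cross $\zeta$-factors and the $\ph_i^-(z_{ia})$-factors carried by the second tensor component of \eqref{eqn:f'} uniformly in the test element $E \in \CS^+_{\geq \b0}$, and verify that the vanishing of the inter-block interaction in the limit $z_b/z_a \to \infty$ is compatible with the simultaneous order of residues, so that the two families of linear conditions on $F$ really do impose membership in $J^{\ord \bpsi}$ and in $J^{\neq 0}(\bpsi)$ independently.
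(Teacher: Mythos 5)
Your overall route is the same as the paper's: separate the contribution of the origin from that of the nonzero poles of $\bpsi$, and recognize the coproduct \eqref{eqn:coproduct shuffle minus} as the algebraic counterpart of that contour splitting. However, the step you defer to your last paragraph as ``the main obstacle'' is not a technicality to be cleaned up afterwards --- it is the entire content of the proof, and the mechanism you propose for it does not work. Taking an iterated residue at $z_a = 0$ extracts a Laurent coefficient, not a value at a point, so the cross factors $\zeta_{i_b i_a}(z_b/z_a)$ linking a variable at the origin to a variable near a nonzero pole do \emph{not} simply ``tend to $1$'': writing $1/\zeta_{i_b i_a}(z_b/z_a) = (z_b - z_a)/(z_b - z_a q^{-d_{i_b i_a}})$, every term of its expansion in powers of $z_a/z_b$ contributes to the residue, as do the mixed-color denominators $(z_{ia}-z_{jb})$ inside $F$ itself. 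This is exactly why the two blocks do not decouple for free, and why the paper must expand all of these cross factors in the region $|z_{a_\bullet}| \ll |z_{b_\bullet}|$ and absorb them into the tensor components: in the computation \eqref{eqn:integral 2}--\eqref{eqn:integral 4}, that expansion is what produces $F' \otimes F''$ out of $F$, while the test element decomposes as $\Delta(E) = \sum \ph E' \otimes E''$ with $E'$ carrying a ratio of cross $\zeta$-factors; only after this exact bookkeeping does the single condition \eqref{eqn:j pairing} split into the two conditions \eqref{eqn:j pairing polynomial} and \eqref{eqn:j pairing new}. Your proposal asserts this splitting rather than proving it.

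Two further steps required by the statement are also missing. First, well-definedness of $F \mapsto F' \otimes F''$: the sum in \eqref{eqn:f'} is infinite because of the power series expansion, and one must argue that all but finitely many summands vanish in $L^{\ord \bpsi} \otimes L^{\neq 0}(\bpsi)$ --- the paper does this by noting that the degree of $F'$ only increases along the expansion and that elements of sufficiently large degree automatically lie in $J^{\br}$, $\br = \ord \bpsi$. Second, the forward implication $F \in J(\bpsi) \Rightarrow F' \otimes F'' \in J^{\br} \otimes \CS^- + \CS^- \otimes J^{\neq 0}(\bpsi)$, which in the paper requires the approximation trick of choosing polynomials $Q_i$ with $Q_i(w)\psi_i(w) \in w^{-r_i} + w^N \BC[w]$ for $N \gg 0$, so that the monomial $z^{-r_i}$ of \eqref{eqn:j pairing polynomial} can be traded for $\psi_i$; nothing in your support-block picture substitutes for this. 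Finally, your surjectivity argument is circular as stated: the dimension count you invoke (``the block factorization shows\dots'') presupposes precisely the factorization you have not established, whereas the paper quotes the proof of Theorem 1.3 of \cite{N Cat} for surjectivity.
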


\medskip

\begin{proof} Let $\br = \ord \bpsi$. First of all, we note that although the implicit sum in $F' \otimes F''$ is infinite due to the power series expansion in \eqref{eqn:coproduct shuffle minus}, all but finitely many summands vanish in $L^{\br} \otimes L^{\neq 0}(\bpsi)$; this is because the	homogeneous degree of $F'$ can only increase in this power series expansion, and shuffle elements of sufficiently large degree lie in $J^\br$ (and thus vanish in $L^\br$) automatically. In order to construct the map \eqref{eqn:decomposition simple} and to show that it is injective, we must prove that 	
\begin{equation}
\label{eqn:that}
F \in J(\bpsi) \quad \Leftrightarrow \quad F' \otimes F'' \in J^{\br} \otimes \CS^- + \CS^- \otimes J^{\neq 0}(\bpsi)
\end{equation}
For the implication $\Rightarrow$, we must show that
\begin{multline}
\label{eqn:this}
\left \langle E'(z_{i1},\dots,z_{im_i})_{i \in I} \prod_{i \in I} \prod_{a=1}^{m_i} z_{ia}^{-r_i}, S(F') \right \rangle \\ \left \langle E''(z_{i,m_i+1},\dots,z_{in_i})_{i \in I} \prod_{i \in I} \prod_{a=m_i+1}^{n_i} \psi_i(z_{ia}), S(F'') \right \rangle = 0
\end{multline}
for all $E' \in \CS_{\geq \b0|\bm}$ and $E'' \in \CS_{\geq N \bone|\bn-\bm}$ with $N$ large enough. By \eqref{eqn:bialgebra 2} and the anti-automorphism property of the antipode, the condition above is equivalent to \footnote{Note that we must use the following straightforward consequence of \eqref{eqn:coproduct shuffle plus} and \eqref{eqn:bialgebra 1}:
\begin{equation}
\label{eqn:consequence 1}
\langle E'', S(\ph F'') \rangle = \varepsilon(\ph) \langle E'',S(F'')\rangle
\end{equation}
for any $E'' \in \CS^+$, $F'' \in \CS^-$ and $\ph = \ph^-_{i_1,d_1} \ph^-_{i_2,d_2}\dots$. We also note that $\ph$ is on opposite sides of $F''$ in the RHS of \eqref{eqn:f'} and in the LHS of \eqref{eqn:consequence 1}. This is not an issue, since commuting $\ph$ past $F''$ happens at the cost of multiplying $F' \otimes F''$ by a power series in $\{z_{ia}/z_{jb}\}_{i,j \in I, a \leq m_i, b > m_j}$ with non-zero constant term, which does not change the property that $F' \otimes F'' \in J^{\br} \otimes \CS^- + \CS^- \otimes J^{\neq 0}(\bpsi)$.}
$$
\left \langle E'(z_{i1},\dots,z_{im_i}) * E''(z_{i,m_i+1},\dots,z_{in_i}) \prod_{i \in I} \left( \prod_{a=1}^{m_i} z_{ia}^{-r_i} \prod_{a=m_i+1}^{n_i} \psi_i(z_{ia}) \right), S(F)\right \rangle = 0
$$
However, consider the following fact: since $\psi_i(w) = O(w^{-r_i})$ near 0, then we can find a polynomial $Q_i(w)$ such that $Q_i(w)\psi_i(w) \in w^{-r_i} + w^N\BC[w]$ for arbitrarily large $N$ (which we choose large enough so that any shuffle element of vertical degree $\geq N$ is automatically in $J^\br$). Thus, the equation above is implied by
$$
\left \langle E'(z_{i1},\dots,z_{im_i})\prod_{i \in I} \prod_{a=1}^{m_i} Q_i(z_{ia}) * E''(z_{i,m_i+1},\dots,z_{in_i}) \prod_{i \in I} \prod_{a=1}^{n_i} \psi_i(z_{ia}), S(F)\right \rangle = 0
$$
which in turn holds because $F \in J(\bpsi)$ and $E' \prod_{i,a} Q_i(z_{ia}) * E'' \in \CS^+_{\geq \b0}$.

\medskip

\noindent Let us now prove the implication $\Leftarrow$ of \eqref{eqn:that}. To do so, we will relate the vanishing condition \eqref{eqn:j pairing} with the vanishing conditions \eqref{eqn:j pairing new} and \eqref{eqn:j pairing polynomial} by explicitly working out the pairings involved. By \eqref{eqn:spherical}, any $E \in \CS_{\geq \b0|\bn}$ can be written (non-uniquely) as a linear combination of shuffle elements of the form
$$
E = \text{Sym} \left[ \nu(z_1,\dots,z_n) \prod_{1 \leq a < b \leq n} \zeta_{i_ai_b} \left(\frac {z_a}{z_b} \right) \right]
$$
for some Laurent polynomial $\nu$ and some ordering $i_1,\dots,i_n$ of $\bn$. Implicitly, we identify each symbol $z_a$ with $z_{i_a\bullet_a}$ for some $\bullet_a \in \BN$ (the choice of which does not matter due to the symmetrization in the formula above), see \eqref{eqn:notation ordering}. Then formula \eqref{eqn:antipode pairing shuffle} states that $F \in J(\bpsi)_{\bn}$ if and only if for any $E$ as above we have
\begin{equation}
\label{eqn:integral 1}
\int_{1 \ll |z_1| \ll \dots \ll |z_n|} \frac {\nu(z_1,\dots,z_n)F(z_1,\dots,z_n)}{ \prod_{1 \leq a < b \leq n} \zeta_{i_bi_a} \left(\frac {z_b}{z_a} \right)} \prod_{a=1}^n \psi_{i_a}(z_a) = 0
\end{equation}
We will move the contours of integration in the formula above past the non-zero poles of the rational functions $\psi_i(z)$ toward 0, which means that \eqref{eqn:integral 1} is equivalent to
\begin{multline}
\label{eqn:integral 2}
\sum_{\{1,\dots,n\} = \{a_1<\dots <a_k\} \sqcup \{b_1 < \dots < b_{\ell}\}} \underset{z_{b_\ell} \neq 0}{\text{Res}} \dots \underset{z_{b_1}\neq 0}{\text{Res}} \ \underset{z_{a_k}=0}{\text{Res}} \dots \underset{z_{a_1}=0}{\text{Res}} \\ \frac {\nu(z_1,\dots,z_n)F(z_1,\dots,z_n)}{ \prod_{1 \leq a < b \leq n} \zeta_{i_bi_a} \left(\frac {z_b}{z_a} \right)} \prod_{a=1}^n \frac {\psi_{i_a}(z_a)}{z_a} = 0
\end{multline}
where for any rational function $f$ we write
$$
\underset{w \neq 0}{\text{Res}} \ f(w) = \sum_{c \in \BC^*} \underset{w = c}{\text{Res}} \ f(w)
$$
In what follows, we will expand the rational functions on the second line of \eqref{eqn:integral 2} as $|z_{a_1}|,\dots,|z_{a_k}| \ll |z_{b_1}|,\dots,|z_{b_{\ell}}|$ and indicate this as $F(z_{a_1},\dots,z_{a_k} \otimes z_{b_1},\dots,z_{b_{\ell}})$ etc, as in \eqref{eqn:coproduct shuffle minus}. We conclude that $F \in J(\bpsi)_{\bn}$ if and only if
\begin{equation}
\label{eqn:integral 3}
\sum_{\{1,\dots,n\} = \{a_1<\dots <a_k\} \sqcup \{b_1 < \dots < b_{\ell}\}} \underset{z_{b_\ell} \neq 0}{\text{Res}} \dots \underset{z_{b_1}\neq 0}{\text{Res}} \ \underset{z_{a_k}=0}{\text{Res}} \dots \underset{z_{a_1}=0}{\text{Res}} 
\end{equation}
$$ 
\frac {\nu(z_{a_1},\dots,z_{a_k} \otimes z_{b_1},\dots,z_{b_{\ell}})F(z_{a_1},\dots,z_{a_k} \otimes z_{b_1},\dots,z_{b_{\ell}})}{ \prod_{1 \leq a < b \leq n} \zeta_{i_bi_a} \left(\frac {z_b}{z_a} \right)} \prod_{s=1}^{k} z_{a_s}^{-r_{i_{a_s}}-1} \prod_{t=1}^{\ell} \frac {\psi_{i_{b_t}}(z_{b_t})}{z_{b_t}} = 0
$$
Above, we used the fact that $\psi_i(w) =O(w^{-r_i})$ near 0 to replace $\psi_{i_{a_s}}(z_{a_s})$ by $z_{a_s}^{-r_{i_{a_s}}}$. This is allowed because $\CS_{\geq \b0}^+$ is closed under multiplication by color-symmetric polynomials, and thus \eqref{eqn:integral 2} holds for $\nu$ multiplied by any color-symmetric polynomial in $z_1,\dots,z_n$. Moreover, if the expression in \eqref{eqn:integral 3} has total degree in $z_{a_1},\dots,z_{a_k}$ large enough, the value of the residue in $z_{a_1},\dots,z_{a_k}$ at 0 will vanish; this also explains why only finitely many terms of the expansion $|z_{a_1}|,\dots,|z_{a_k}| \ll |z_{b_1}|,\dots,|z_{b_{\ell}}|$ survive the residue \eqref{eqn:integral 3}. Using relations \eqref{eqn:shuffle plus commute} and \eqref{eqn:coproduct shuffle plus}, it is straightforward to obtain
$$
\Delta(E) = \sum \ph E' \otimes E''
$$
where (above and henceforth) the symbol $\sum$ stands for summing over partitions $\{1,\dots,n\} = \{a_1<\dots<a_k\} \sqcup \{b_1 < \dots < b_\ell\}$ for various $k$, $\ell$, and we write
\begin{align*}
&\ph = \ph_{i_{b_1}}^+(z_{b_1}) \dots \ph_{i_{b_\ell}}^+(z_{b_\ell}) \\
&E' = \text{Sym} \left[ \nu'(z_{a_1},\dots,z_{a_k}) \prod_{1 \leq s < t \leq k} \zeta_{i_{a_s} i_{a_t}} \left(\frac {z_{a_s}}{z_{a_t}} \right) \right] \prod_{1 \leq s \leq k, 1 \leq t \leq \ell}^{a_s < b_t} \frac {\zeta_{i_{a_s}i_{b_t}} \left(\frac {z_{a_s}}{z_{b_t}} \right)}{\zeta_{i_{b_t}i_{a_s}} \left(\frac {z_{b_t}}{z_{a_s}} \right)} \\ 
&E'' = \text{Sym} \left[ \nu''(z_{b_1},\dots,z_{b_\ell}) \prod_{1 \leq s < t \leq \ell} \zeta_{i_{b_s} i_{b_t}} \left(\frac {z_{b_s}}{z_{b_t}} \right) \right] 
\end{align*} 
where $\nu(z_1,\dots,z_n) = \nu'(z_{a_1},\dots,z_{a_k}) \otimes \nu''(z_{b_1},\dots,z_{b_{\ell}})$.  Therefore, condition \eqref{eqn:integral 3} can be translated into the fact that $F \in J(\bpsi)_{\bn}$ if and only if \footnote{Note that we must use the following straightforward consequence of \eqref{eqn:coproduct shuffle minus} and \eqref{eqn:bialgebra 2}:
\begin{equation}
\label{eqn:consequence 2}
\langle \ph E', S(F') \rangle = \varepsilon(\ph) \langle E',S(F')\rangle
\end{equation}
for any $E' \in \CS^+$, $F' \in \CS^-$ and $\ph = \ph^+_{i_1,d_1} \ph^+_{i_2,d_2} \dots$.}
\begin{multline}
\label{eqn:integral 4}
\sum \left \langle E' \prod_{s=1}^{k} z_{a_s}^{-r_{i_{a_s}}} ,  S(F')  \right \rangle \\
\underset{z_{b_\ell} \neq 0}{\text{Res}} \dots \underset{z_{b_1}\neq 0}{\text{Res}}  \frac {\nu''(z_{b_1},\dots,z_{b_{\ell}})F''(z_{b_1},\dots,z_{b_{\ell}})}{ \prod_{1 \leq s<t \leq \ell} \zeta_{i_{b_t}i_{b_s}} \left(\frac {z_{b_t}}{z_{b_s}} \right)}  \prod_{t=1}^{\ell} \frac {\psi_{i_{b_t}}(z_{b_t})}{z_{b_t}} = 0
\end{multline}
The fact that $E \in \CS_{\geq \b0}^+ \Rightarrow E' \in \CS_{\geq \b0}^+$ (which is essentially a reformulation of \eqref{eqn:naive 1}) immediately allows us to conclude the $\Leftarrow$ implication of \eqref{eqn:that}: if $F' \in J^{\br}$ then the first line of \eqref{eqn:integral 4} vanishes, while if $F'' \in J^{\neq 0}(\bpsi)$ then the second line of \eqref{eqn:integral 4} vanishes due to \eqref{eqn:residue condition} being true for all $x_1,\dots,x_n \in \BC^*$. We have thus shown that the map \eqref{eqn:decomposition simple} is well-defined and injective. As explained in the proof of \cite[Theorem 1.3]{N Cat}, this map is also surjective, which concludes the proof of Proposition \ref{prop:decomposition simple}. \end{proof}

\medskip 

\subsection{$q$-characters}
\label{sub:q-characters}

For any representation $\CA^\geq \curvearrowright V$ in category $\CO$, its $q$-character is defined (following \cite{FR,HJ}) as
\begin{equation}
\label{eqn:q-character definition}
\chi_q(V) = \sum_{\bpsi} \dim_{\BC}(V_\bpsi) [\bpsi]
\end{equation}
where $[\bpsi]$ are formal symbols associated to rational $\ell$-weights $\bpsi$, and $V_\bpsi$ is the generalized eigenspace of $V$ on which the Cartan series $\ph^+_i(z)$ acts by $\psi_i(z)$, for all $i \in I$. In the particular case of a simple module, the description of the action of Cartan elements in \eqref{eqn:cartan series} implies the following formula for the $q$-character (\cite{N Cat})
\begin{multline}
\label{eqn:q-character simple}
\chi_q(L(\bpsi)) = [\bpsi] \sum_{\bn \in \nn} \sum_{\bx = (x_{ia})_{i \in I, a \in \{1,\dots,n_i\}} \in \BC^{\bn}} \\ \dim_{\BC} \left(  \CS_{<\b0|-\bn} \Big / J(\bpsi)_{\bn} \right)_{\bx} \underbrace{\left[\left( \prod_{i \in I} \prod_{a=1}^{n_i} \frac {z - x_{ia}q^{d_{ij}}}{zq^{d_{ij}}-x_{ia}} \right)_{j \in I} \right]}_{\text{this }\ell\text{-weight is }\prod_{i,a} A_{i,x_{ia}}^{-1} \text{ as per \cite{FR}}}
\end{multline}
where we recall that elements of $\BC^{\bn}$ are $I$-tuples of unordered sets $(x_{i1},x_{i2},\dots)$. Above, we write $[\bpsi][\bpsi'] = [\bpsi\bpsi']$ with respect to component-wise multiplication of $I$-tuples of power series/rational functions. Because the decomposition \eqref{eqn:decomposition simple} respects the action of $\BC[\ph_{i,d}^+]_{i \in I, d \geq 0}$, then it also respects $q$-characters in the following sense
\begin{equation}
\label{eqn:decomposition simple characters}
\chi_q(L(\bpsi)) = \chi_q^{\neq 0}(L(\bpsi)) \cdot \chi^{\ord \bpsi}
\end{equation}
where

\medskip

\begin{itemize}[leftmargin=*]
	
	\item $\chi_q^{\neq 0}(L(\bpsi))$ is defined by formula \eqref{eqn:q-character definition} with $V = L^{\neq 0}(\bpsi)$; the proof of Propositions \ref{prop:residues} and \ref{prop:decomposition simple} shows that this quantity simply picks out the summands of \eqref{eqn:q-character simple} corresponding to $\bx \in (\BC^*)^{\bn}$ instead of $\bx \in \BC^{\bn}$, i.e.
$$
\chi^{\neq 0}_q(L(\bpsi)) = [\bpsi] \sum_{\bn \in \nn} \sum_{\bx \in (\BC^*)^{\bn}} \\ \dim_{\BC} \left(  \CS_{<\b0|-\bn} \Big / J(\bpsi)_{\bn} \right)_{\bx} \left[\left( \prod_{i \in I} \prod_{a=1}^{n_i} \frac {z - x_{ia}q^{d_{ij}}}{zq^{d_{ij}}-x_{ia}} \right)_{j \in I} \right] 
$$
	
	\item for any $\br \in \zz$, we define
\begin{equation}
	\label{eqn:character polynomial}
	\chi^{\br} = \sum_{\bn \in \nn} \dim_{\BC} \left(\CS_{<\b0|-\bn} \Big/ J^{\br}_{\bn} \right) q^{-\bn}
\end{equation}
where $q^{-\bn}$ is identified with the $\ell$-weight $[(q^{-(\bn,\bs^i)})_{i \in I}]$.
	
\end{itemize}

\medskip

\begin{remark} In the notation of \cite{FR}, we have 
\begin{equation}
\label{eqn:fm}
\left[ \left( \frac {z - xq^{d_{ij}}}{zq^{d_{ij}}-x} \right)_{j \in I} \right] = A_{i,x}^{-1}
\end{equation}
and so formula \eqref{eqn:q-character simple} states that $\dim_{\BC}\left(  \CS_{<\b0|-\bn} \Big / J(\bpsi)_{\bn} \right)_{\bx}$ is the coefficient of 
$$
\prod_{i\in I} \left( A_{i,x_{i1}} \dots A_{i,x_{in_i}} \right)^{-1}
$$
in the renormalized $q$-character $\frac {\chi_q(L(\bpsi))}{[\bpsi]}$. 

\end{remark}

\medskip

\subsection{Characters} 
\label{sub:characters}

In formula \eqref{eqn:decomposition simple characters}, the second term in the right-hand side is character \eqref{eqn:character polynomial}; we call it a ``character" instead of a ``$q$-character" because all the constituent $\ell$-weights are actually constant power series. Because of the vertical grading in \eqref{eqn:grading simple polynomial}, we may actually refine this character as follows
\begin{equation}
	\label{eqn:q-character polynomial}
\chi_{\text{ref}}^{\br} = \sum_{\bn \in \nn} \sum_{d = 0}^{\infty} \dim_{\BC} \left(\CS_{<\b0|-\bn,d} \Big/ J^{\br}_{\bn,d} \right) q^{-\bn} v^d
\end{equation}
We are now ready to prove our main result.

\medskip

\begin{proof} \emph{of Theorem \ref{thm:main}:} We will actually prove the following claim for any Kac-Moody Lie algebra $\fg$. If the conjectural formula \eqref{eqn:conj} holds, then we have the following formula for the refined character associated to any $\br \in \zz$
\begin{equation}
	\label{eqn:thm}
	\chi_{\text{ref}}^{\br} = \prod_{\bn \in \nn \backslash \b0} \prod_{d=1}^{\max(0,\br \cdot \bn)} \left( \frac 1{1-q^{-\bn}v^d} \right)^{a_{\fg,\bn}}
\end{equation}
(for $\fg$ of finite type, \eqref{eqn:conj} was proved in \cite{N Cat} with $a_{\fg,\bn}$ being 1 or 0 depending on whether $\bn$ is or is not a positive root). We will now use the results in Section \ref{sec:dimensions} to prove \eqref{eqn:thm}. To this end, recall the shift automorphism \eqref{eqn:shift}, with respect to which condition \eqref{eqn:j pairing polynomial} states the following criterion for any $F \in \CS_{<\b0}^-$
$$
F \in J^\br \quad \Leftrightarrow \quad \Big \langle \CS^+_{\geq - \br}, S(F) \Big \rangle = 0
$$
Thus, if we consider the restriction of pairing \eqref{eqn:pairing shuffle}, twisted by the antipode
\begin{equation}
\label{eqn:restricted pairing}
\CS^+_{\geq -\br} \otimes \CS^-_{<\b0} \xrightarrow{\langle \cdot, S(\cdot) \rangle} \BC 
\end{equation}
then $J^{\br}$ is the kernel of the pairing \eqref{eqn:restricted pairing}. Thus, $\chi^\br_{\text{ref}}$ is none other than
\begin{equation}
\label{eqn:last iso}
\left(\text{graded dimension of }\frac {\CS_{<\b0}^-}{\text{Ker \eqref{eqn:restricted pairing}}} \right) = \left(\text{graded dimension of } \frac {\CS_{\geq -\br}^+}{\text{Ker \eqref{eqn:restricted pairing}}} \right) 
\end{equation}
with the equality above being the elementary statement that if $A$ and $B$ are graded vector spaces with finite-dimensional components, endowed with a pairing $A \otimes B \rightarrow \BC$ that respects the grading, then the quotients of $A$ and $B$ by the respective kernels of the pairing have the same dimension. 

\medskip 

\noindent By applying the shift automorphism $\sigma_{\br}$ again (recall that the pairing and antipode are invariant under shifts by Proposition \ref{prop:shifts}), the graded dimension in \eqref{eqn:last iso} is the same as the LHS of \eqref{eqn:key} for $\bp_1 = -\binfty$ and $\bp_2 = \br$. The RHS of \eqref{eqn:key} is the same as the RHS of \eqref{eqn:thm} (up to the change of variables $d \mapsto \br \cdot \bn - d$, which is due to $\sigma_\br$ and the isomorphism \eqref{eqn:last iso}), which concludes our proof. \end{proof}

\bigskip

\end{document}